\definecolor{mycolor1}{rgb}{0.105882,0.619608,0.466667}
\definecolor{mycolor2}{rgb}{0.85098,0.372549,0.00784314}
\definecolor{mycolor3}{rgb}{0.458824,0.439216,0.701961}
\definecolor{mycolor4}{rgb}{0.905882,0.160784,0.541176}
\definecolor{mycolor5}{rgb}{0.4,0.65098,0.117647}
\definecolor{mycolor6}{rgb}{0.65098,0.462745,0.113725}
\definecolor{mycolor7}{rgb}{0.901961,0.670588,0.00784314}
\definecolor{mycolor8}{rgb}{0.4,0.4,0.4}
\definecolor{mycolor9}{rgb}{0.301961,0,0.294118}
\definecolor{mycolor10}{rgb}{0.0313725,0.25098,0.505882}
\newcommand{\one}{\mathbf{1}}
\newcommand{\transpose}{^\top\! }
\newcommand{\inner}[2]{\left\langle{#1},{#2}\right\rangle}
\newcommand{\innersmall}[2]{\langle{#1},{#2}\rangle}
\newcommand{\innerbig}[2]{\big\langle{#1},{#2}\big\rangle}
\newcommand{\trace}{\mathrm{Tr}}
\newcommand{\spann}{\mathrm{span}}
\newcommand{\Proj}{\mathrm{Proj}}
\newcommand{\expectt}[1]{\mathbb{E}\!\left\{{#1}\right\}}
\newcommand{\expecttsmall}[1]{\mathbb{E}\{{#1}\}}
\newcommand{\T}{\mathrm{T}}
\newcommand{\SOt}{{\mathrm{SO}(3)}}
\newcommand{\Rnn}{{\mathbb{R}^{n\times n}}}
\newcommand{\Rdn}{{\mathbb{R}^{d\times n}}}
\newcommand{\Rd}{{\mathbb{R}^{d}}}
\newcommand{\Rdd}{{\mathbb{R}^{d\times d}}}
\newcommand{\reals}{{\mathbb{R}}}
\newcommand{\Rn}{{\mathbb{R}^n}}
\newcommand{\grad}{\mathrm{grad}}
\newcommand{\Hess}{\mathrm{Hess}}
\newcommand{\diag}{\mathrm{diag}}
\newcommand{\D}{\mathrm{D}}
\newcommand{\calM}{\mathcal{M}}
\newcommand{\rank}{\operatorname{rank}}
\newcommand{\Sd}{\mathbb{S}^{d-1}}
\newcommand{\frobnormbig}[1]{\big\|{#1}\big\|_\mathrm{F}}
\newcommand{\frobnormsmall}[1]{\|{#1}\|_\mathrm{F}}
\newcommand{\sqfrobnormsmall}[1]{\frobnormsmall{#1}^2}
\newcommand{\sqfrobnormbig}[1]{\frobnormbig{#1}^2}
\newcommand{\TODOF}[1]{}
\newcommand{\ddiag}{\mathrm{ddiag}}
\newcommand{\dist}{\mathrm{dist}}
\newtheorem{theorem}{Theorem}
\newtheorem{lemma}[theorem]{Lemma}
\newtheorem{corollary}[theorem]{Corollary}
\newtheorem{remark}[theorem]{Remark}
\def\@seccntformat#1{\protect\makebox[0pt][r]{\csname the#1\endcsname\hspace{12pt}}}\makeatother
\newcommand{\aref}[1]{\hyperref[#1]{A\ref{#1}}}
\title{Synchronization on circles and spheres with nonlinear interactions}
\author{
Christopher Criscitiello\thanks{The Wharton School, University of Pennsylvania, USA. \texttt{crisciti@wharton.upenn.edu}}
\and Quentin Rebjock\thanks{Institute of Mathematics, EPFL, Lausanne, Switzerland. \texttt{quentin.rebjock@epfl.ch}}
\and Andrew D.~McRae\thanks{CERMICS, ENPC, Institut Polytechnique de Paris, CNRS, France. \texttt{andrew.mcrae@enpc.fr}}
\and Nicolas Boumal\thanks{Institute of Mathematics, EPFL, Lausanne, Switzerland. \texttt{nicolas.boumal@epfl.ch}}
}
\date{Compiled \today}
\begin{document}

\maketitle

\begin{abstract}
We consider the dynamics of $n$ points on a sphere in $\Rd$ ($d \geq 2$) which attract each other according to a function $\varphi$ of their inner products.
When $\varphi$ is linear ($\varphi(t) = t$), the points converge to a common value (i.e., synchronize) in various connectivity scenarios:
this is part of classical work on Kuramoto oscillator networks.
When $\varphi$ is exponential ($\varphi(t) = e^{\beta t}$), these dynamics correspond to a limit of how idealized transformers process data, as described by~\citet{geshkovski2025mathtransformers}.
Accordingly, they ask whether synchronization occurs for exponential $\varphi$.

The answer depends on the dimension $d$.
In the context of consensus for multi-agent control, \citet{markdahl2018nsphere} show that for $d \geq 3$ (spheres), if the interaction graph is connected and $\varphi$ is increasing and convex, then the system synchronizes.
We give a separate proof of this result.

What is the situation on circles ($d=2$)?
First, we show that $\varphi$ being increasing and convex is no longer sufficient (even for complete graphs).
Then we identify a new condition under which we do have synchronization on the circle (namely, if the Taylor coefficients of $\varphi'$ are decreasing).
As a corollary, this provide synchronization for exponential $\varphi$ with $\beta \in (0, 1]$.
The proofs are based on nonconvex landscape analysis.

\vspace{3mm}\noindent
\textbf{Keywords:} synchronization, consensus, Kuramoto, nonconvex optimization, benign landscape, strict saddle, transformers, neural ODEs, tight frames
\end{abstract}

\section{Introduction}

Motivated by open questions \citet{geshkovski2025mathtransformers} raised in their paper ``\emph{A mathematical perspective on transformers},'' (first posted to arXiv late 2023) we consider gradient flow to \emph{maximize}\footnote{In contrast, running \emph{negative} gradient flow in an attempt to \emph{minimize} $f$ is also an interesting (and quite different) problem, as it leads to configurations of points that are well spread out on the sphere. This is related to Thomson's problem and Smale's 7th problem with $\varphi(t) = -\log(1-t)$.} the following function defined for $n$ points on the unit sphere in $\Rd$:
\begin{align}
    f(x_1, \ldots, x_n) & = \frac{1}{2} \sum_{i = 1}^n \sum_{j = 1}^n w_{ij} \varphi(x_i\transpose x_j^{}), && \|x_1\| = \cdots = \|x_n\| = 1.
    \tag{P}
    \label{eq:f}
\end{align}
The real numbers $w_{ij} \geq 0$ are the weights of a graph ($w_{ij} = w_{ji}$) and the function $\varphi \colon [-1, 1] \to \reals$ is twice continuously differentiable.
We assume $\varphi$ is strictly increasing, so that the global maxima correspond to synchronized states: $x_1 = \cdots = x_n$.
The question is: under what conditions does gradient flow reliably converge to such a state?

This is well studied in the \emph{linear} case ($\varphi(t) = t$) as it is equivalent to synchronization of Kuramoto networks of phases~\citep{kuramoto1975oscillators} and (by extension) on spheres.
Synchronization questions also go by the name ``consensus'' in the context of multi-agent control~\citep{sarlette2009consensus}.
From that literature (see Section~\ref{sec:related}), we expect markedly different behavior between synchronization on circles ($d = 2$) and spheres ($d \geq 3$).
The graph structure matters greatly on circles
whereas, for higher-dimensional spheres, gradient flow converges to a synchronized state from almost every initial configuration, 
as long as the graph is connected.
The key difference between the two cases is that spheres are simply connected but circles are not~\citep{markdahl2021multiplyconnected}.
The results in this paper also reflect this dichotomy.

In the paper of \citet{geshkovski2025mathtransformers}, problem~\eqref{eq:f} arises for a complete graph with unit weights ($w_{ij} = 1$ for all $i,j$) 
and \emph{nonlinear} $\varphi$ set to be
\begin{align}
    \varphi_\beta(t) & = \frac{1}{\beta} e^{\beta t},
    \label{eq:phibeta}
\end{align}
where $\beta > 0$ is an ``inverse temperature'' in the language of statistical physics.
The limit $\beta \to 0$ corresponds to the linear case $\varphi(t) = t$.

That setting materializes through their study of an idealized model of how input data (tokens) are processed in infinitely deep neural networks with an architecture inspired by transformers.
In that context, the interacting particles on the sphere correspond to tokens, and time for the gradient flow corresponds to depth in the network.\footnote{\citet{geshkovski2025mathtransformers} obtain gradient flow on $f$ with $\varphi = \varphi_\beta$ by modeling deep networks composed of self-attention and layer-normalization layers.  In the self-attention layers, replacing the exponential of the softmax with $\varphi'$ gives, in exactly the same way, the gradient flow on $f$ with \emph{general} $\varphi$ (the object of our study).}
This step of modelling discrete layers as continuous time variables is in line with previous literature on modeling residual neural networks as neural ODEs~\citep{neuralODEsChen,E2017,Haber2018}.
Neural ODEs to study transformers were first proposed in~\citep{LutransformerneuralODE2020,duttatransformerneuralODE2021,sandersinkformers2022}.

For random initial configurations, \citet{geshkovski2025mathtransformers} observed in their setting that the points 
always converge to a global maximum of $f$, namely, $x_1 = \cdots = x_n$.
This is sensible since $\varphi(x_i\transpose x_j^{})$ is maximal when $x_i = x_j$, but it is not a foregone conclusion given that spheres are nonconvex.

Appendix~\ref{sec:matlabcode}
includes Matlab code
for readers who wish to explore the landscape of~\eqref{eq:f} and the associated dynamical system with various choices of parameters.
It requires the Manopt toolbox~\citep{manopt}, which is under GNU GPLv3 license.


\subsection*{Via landscapes}

To analyze the typical 
asymptotic behavior of this dynamical system, it is sufficient to understand the structure of some critical points of $f$, rather than tracking the entire dynamics.
Indeed, if $\varphi$ (hence $f$) is real-analytic, then gradient flow converges to a critical point~\citep{lojasiewicz1965ensembles}.
Assuming a uniformly random initialization,\footnote{Or any other absolutely continuous probability measure.} the Hessian at that critical point is almost surely negative semidefinite owing to the center-stable manifold theorem~\citep[Thm.~III.7, Ex.~III.3]{shub1987book}.
(Though classical, this argument is rarely spelled out: see~\citep[App.~A]{geshkovski2025mathtransformers} for welcome details.)

Thus, to confirm that gradient flow almost surely converges to a synchronized state, it is sufficient to show that points where the gradient of $f$ is zero and the Hessian is negative semidefinite are in fact synchronized (in particular, that they are global maxima). 
The gradient and Hessian are defined with respect to the usual Riemannian metric on the sphere: see Appendix~\ref{app:gradhess} for explicit formulas.

\subsection*{Positive answers, obstructions and remaining open questions}

To set the scene,
we first state the following theorem to handle $d \geq 3$.
In particular, it positively answers the questions of~\citet{geshkovski2025mathtransformers} for $d \geq 3$ (see Corollary~\ref{cor:spheres}).
This follows directly from~\citep[Thm.~13]{markdahl2018nsphere}.
We give a short proof in Appendix~\ref{app:spheres} based on a randomized selection of tangent vectors plugged into the Hessian quadratic form, paralleling the proof in~\citep[\S4]{mcrae2023benignlowdim} for synchronization of rotations.

\begin{theorem}[{\protect{Spheres, \citet[Thm.~13]{markdahl2018nsphere}}}] \label{thm:spheres}
    Fix $n \geq 1$. Assume
    \begin{enumerate}
        \item the ambient dimension $d$ satisfies $d \geq 3$,
        \item the undirected graph defined by the weights $w_{ij} \geq 0$ is connected, and
        \item $\varphi'(t) > 0$ and $\varphi''(t) \geq 0$ for all $t \in [-1, 1]$.
    \end{enumerate}
    Then, critical points of $f$ where the Hessian is negative semidefinite are global maxima of $f$.
    In particular: local maxima are global maxima, they are the synchronized
    states ($x_1 = \cdots = x_n$), and (if $\varphi$ is real-analytic) gradient flow
    converges to a synchronized state from almost every initialization.
\end{theorem}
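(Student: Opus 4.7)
The convergence-from-almost-every-initialization claim follows immediately from the landscape reasoning already summarized above: \L{}ojasiewicz forces gradient flow to converge to some critical point when $f$ is real-analytic, and the center-stable manifold theorem rules out critical points with an indefinite Hessian almost surely. The entire theorem therefore reduces to the structural statement: any critical point of $f$ with negative semidefinite Riemannian Hessian must be synchronized. Once this is shown, such a point is automatically a global maximum because $\varphi$ is strictly increasing, so $\varphi(x_i\transpose x_j^{}) \leq \varphi(1)$ with equality iff $x_i = x_j$, and connectedness then forces $x_1 = \cdots = x_n$.

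\textbf{Randomized tangent direction.} I plan to argue by contradiction: fix a critical point $x = (x_1, \ldots, x_n)$ that is not synchronized, and exhibit a tangent vector along which the Hessian quadratic form is strictly positive. Rather than construct one by hand, I will draw a single ambient $v \sim \mathcal{N}(0, I_d)$ and set $\xi_i = (I_d - x_i x_i\transpose) v$, which automatically lies in $T_{x_i}\mathbb{S}^{d-1}$. Writing the Riemannian Hessian as the ambient Hessian of $\bar f$ minus the second-fundamental-form correction $\sum_i \|\xi_i\|^2 \, x_i\transpose \nabla_i \bar f$, and using the second-moment identities $\expect[v v\transpose] = I_d$, $\expect[\|\xi_i\|^2] = d - 1$, $\expect[\xi_i\transpose \xi_j] = d - 2 + t_{ij}^2$, and $\expect[(\xi_i\transpose x_j + x_i\transpose \xi_j)^2] = 2(1 - t_{ij})^2(1 + t_{ij})$ where $t_{ij} = x_i\transpose x_j^{}$, the computation collapses to
\[
\expect\!\left[\Hess f(x)[\xi,\xi]\right] = \sum_{i,j} w_{ij} \Big\{\, \varphi''(t_{ij})(1-t_{ij})^2(1+t_{ij}) + \varphi'(t_{ij})(t_{ij}-1)(t_{ij}-(d-2)) \,\Big\}.
\]

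\textbf{Where the dimension enters.} The hypothesis $d \geq 3$ is used exactly once, in the second summand: the quadratic $(t-1)(t-(d-2))$ has both roots in $[1, \infty)$, so it is $\geq 0$ throughout $[-1, 1]$ and is strictly positive whenever $t < 1$. Together with $\varphi'' \geq 0$ and $\varphi' > 0$, every term in the sum is nonnegative. If $x$ is not synchronized, then by connectedness of the weighted graph some edge $(i, j)$ has $w_{ij} > 0$ and $x_i \neq x_j$, so $t_{ij} < 1$; this one term alone makes the expectation strictly positive, so some realization of $\xi$ gives $\Hess f(x)[\xi, \xi] > 0$, contradicting negative-semidefiniteness. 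The only step I expect to require care is bookkeeping---getting the Riemannian Hessian right (in particular the normal-bundle correction at each sphere factor) and shepherding the second-moment identities so that the polynomial $(t-1)(t-(d-2))$ actually emerges. The conceptual punchline is that the proof really does turn on the sign of this quadratic on $[-1, 1]$, which is why the argument cleanly covers $d \geq 3$ and visibly collapses for $d = 2$, where the same quadratic becomes $t(t-1)$ and changes sign---consistent with the dichotomy advertised in the introduction.
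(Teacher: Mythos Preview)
Your proposal is correct and follows essentially the same route as the paper's proof in Appendix~\ref{app:spheres}: draw a single Gaussian $v$, project it to each tangent space, and take the expectation of the Hessian quadratic form. The only difference is bookkeeping. The paper drops the $\varphi''$ term early (using $\varphi'' \geq 0$), decomposes $(X\transpose X)^{\odot 2} = (\one\one\transpose - X\transpose X)^{\odot 2} - \one\one\transpose + 2X\transpose X$, and invokes the first-order condition to kill the $2X\transpose X$ contribution, ending at $0 \leq -\sum_{i,j} w_{ij}\varphi'(t_{ij})(1-t_{ij})^2$. You instead keep the $\varphi''$ term and compute the $\varphi'$ part directly, obtaining the factored form $(t-1)(t-(d-2))$; the first-order condition is never needed (indeed $\innersmall{S}{X\transpose X}=0$ holds identically because $\diag(X\transpose X)=\one$). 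The paper itself notes your variant in the Remark closing Appendix~\ref{app:spheres}. Your packaging makes the role of $d\geq 3$ pleasantly transparent via the location of the roots of that quadratic, but the two arguments are the same idea.
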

\begin{corollary}\label{cor:spheres}
    Theorem~\ref{thm:spheres} applies for $\varphi(t) = t$ and for $\varphi(t) = \varphi_\beta(t)$ with all $\beta > 0$.
\end{corollary}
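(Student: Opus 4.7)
The plan is simply to verify that hypothesis (3) of Theorem~\ref{thm:spheres} holds for each of the two displayed choices of $\varphi$; hypotheses (1) and (2) are the background assumptions inherited from the theorem statement and need no further checking. So this is not a real proof in the sense of developing new ideas — it is a direct consequence, recorded as a corollary only because these are the two specific $\varphi$ highlighted in the introduction as motivating cases (the Kuramoto linear case and the Geshkovski et al.\ transformer case~\eqref{eq:phibeta}).

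First I would handle the linear case $\varphi(t) = t$: then $\varphi'(t) = 1 > 0$ and $\varphi''(t) = 0 \geq 0$ on $[-1,1]$, so (3) is satisfied (with equality in the convexity condition). Second, I would handle $\varphi = \varphi_\beta$ with $\beta > 0$: differentiating~\eqref{eq:phibeta} gives $\varphi_\beta'(t) = e^{\beta t}$ and $\varphi_\beta''(t) = \beta\, e^{\beta t}$, both of which are strictly positive on $[-1,1]$ since $e^{\beta t} > 0$ and $\beta > 0$. This verifies (3) and hence Theorem~\ref{thm:spheres} applies in both cases.

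The only ``obstacle'' is a bookkeeping one, namely making sure the reader sees the corollary in the right way: its role is to confirm that the sufficient conditions identified by Theorem~\ref{thm:spheres} cover precisely the nonlinear interaction $\varphi_\beta$ raised by~\citet{geshkovski2024transformers}, thereby answering their Problems 4 and 5 affirmatively in ambient dimensions $d \geq 3$, and leaving the circle case $d = 2$ — the main contribution of the paper — as the genuinely interesting open direction.
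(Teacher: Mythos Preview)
Your verification is correct and is exactly the intended argument: the paper states the corollary without proof because hypothesis~(3) is immediate from $\varphi'(t)=1,\ \varphi''(t)=0$ in the linear case and $\varphi_\beta'(t)=e^{\beta t}>0,\ \varphi_\beta''(t)=\beta e^{\beta t}>0$ in the exponential case. There is nothing to add.
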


Let us go through the assumptions in Theorem~\ref{thm:spheres}.
If $\varphi'$ is not positive ($\varphi$ not monotonously increasing) there may be global maxima that are not synchronized: see $\varphi(t) = t^2$ in Theorem~\ref{thm:quadraticphi} below.
If $\varphi''$ is not nonnegative ($\varphi$ not convex) there may be non-global local maxima: consider a tetrahedron ($d = 3, n = 4$) with $\varphi(t) = t^3 + \frac{1}{10}t$.
%
%
If the graph is not connected, then the claims of Theorem~\ref{thm:spheres} still apply to each connected component separately.

The assumption $d \geq 3$ is more interesting---and indeed necessary.
As mentioned above, in the linear case ($\varphi(t) = t$) there are counterexamples when $d = 2$.
However, these are for \emph{incomplete} graphs (the simplest example is a cycle graph; see, e.g., \cite{townsend2020densenetworks} for many more).
In contrast, the setting in \citep{geshkovski2025mathtransformers} centers on complete graphs.

Remarkably, with nonlinear $\varphi$ and $d = 2$, even a \emph{complete} graph with \emph{unit} weights can harbor spurious local maxima.
In Section~\ref{sec:circles}, we construct a \emph{single} function $\varphi$ which satisfies the assumptions of Theorem~\ref{thm:spheres} yet for which, with $d = 2$, the conclusion of that theorem fails for \emph{all} $n \geq 5$.
\begin{theorem}[Circles] \label{thm:circles}
%
Let $d=2$, and consider the complete graph with $w_{ij} = 1$ for all $i, j$.
For $\varepsilon > 0$ and $\tau \in (-1, 1)$, define
\begin{align*}
    \varphi_{\varepsilon, \tau}(t) = \varepsilon \log\!\left(1 + e^{(t-\tau) / \varepsilon}\right),
\end{align*}
which is real-analytic and satisfies $\varphi_{\varepsilon,\tau}'(t), \varphi_{\varepsilon,\tau}''(t) > 0$ for all $t$.
There exist $\varepsilon$ and $\tau$ such that, for all $n\geq 5$, gradient flow on~\eqref{eq:f} with $\varphi = \varphi_{\varepsilon, \tau}$
and uniformly random initialization converges to a spurious local maximum (not synchronized) with positive probability.
\end{theorem}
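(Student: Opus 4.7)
The plan is to construct, for every $n \geq 5$, an explicit spurious local maximum of $f$ via a five-cluster configuration. Choose $\tau \in (\cos(4\pi/5), \cos(2\pi/5))$ (for instance $\tau = 0$). As $\varepsilon \to 0$, $\varphi_{\varepsilon,\tau}'(t) = \sigma((t-\tau)/\varepsilon)$ tends to the indicator $\mathbf{1}[t > \tau]$, so in this hard-threshold limit two points interact iff their inner product exceeds $\tau$. At the five equally-spaced angles $\theta_k^0 = 2\pi k/5$ for $k \in \{0, 1, 2, 3, 4\}$, nearest-neighbor clusters (inner product $\cos(2\pi/5) > \tau$) interact while second-neighbor clusters (inner product $\cos(4\pi/5) < \tau$) do not, producing a pentagonal coupling between clusters.

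First take $n$ divisible by $5$ and place $n/5$ points at each $\theta_k^0$. Five-fold symmetry makes this configuration a critical point for every $\varepsilon > 0$. In the $\varepsilon \to 0$ limit I split angular perturbations Hessian-orthogonally into intra-cluster modes (mean-zero within each cluster) and rotate-cluster modes (each cluster rigidly shifted by a common $s_k$). On each intra-cluster direction the Hessian acts with eigenvalue $-(n/5)(1 + 2\cos(2\pi/5)) < 0$, and on rotate-cluster directions its quadratic form equals
\begin{equation*}
-\cos(2\pi/5)\,(n/5)^2 \sum_{k \in \mathbb{Z}/5} (s_k - s_{k-1})^2,
\end{equation*}
which is $\leq 0$ and vanishes exactly when $s$ is constant in $k$---that is, on the $1$-dimensional kernel of global rotation. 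Hence in the limit the Hessian is negative semidefinite with a $1$-dimensional kernel. The restriction $n \geq 5$ enters here: with $m \in \{3, 4\}$ clusters one would have $\cos(2\pi/m) \leq 0$ and the analogous rotate-cluster form would fail to be non-positive. For $n$ not divisible by $5$, distribute points into balanced cluster sizes $n_k \in \{\lfloor n/5\rfloor, \lceil n/5\rceil\}$; the equally-spaced arrangement is no longer critical, but---by the invertibility of the cluster-angle Jacobian at the uniform case (which is, up to sign, the limit rotate-cluster form above on the complement of the rotation mode)---a continuation/implicit-function-theorem argument produces a nearby critical 5-cluster configuration with perturbed cluster angles and the same negative-semidefinite hard-threshold Hessian signature. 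The finitely many small $n$ for which the continuation step would require a large perturbation can be checked directly.

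Passing to $\varepsilon > 0$ small is a Weyl-type perturbation. Each Hessian entry is real-analytic in $\varepsilon$ and deviates from its hard-threshold value by $\calO(e^{-c/\varepsilon}/\varepsilon)$ with $c = \min(\cos(2\pi/5) - \tau,\, \tau - \cos(4\pi/5),\, 1 - \tau) > 0$ uniform in $n$. A row-sum bound then gives operator-norm perturbation $\calO(n\, e^{-c/\varepsilon}/\varepsilon)$, whereas for the balanced cluster-size choice the spectral gap of the limit Hessian (smallest absolute value of a nonzero eigenvalue) scales as $\Omega(n)$: the ratio (perturbation)/(gap) is $\calO(e^{-c/\varepsilon}/\varepsilon)$, depending on $\varepsilon$ alone. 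Fix $\varepsilon > 0$ small enough that this ratio is strictly less than one. Weyl's inequality then implies that for every $n \geq 5$ the perturbed Hessian retains the exact $\SOtwo$-rotational kernel (pinned by the rotational invariance of $f$) and has all other eigenvalues strictly negative. Since $\varphi$ is strictly increasing, $f$ at the 5-cluster configuration is strictly below the global maximum (attained at any synchronized state), so this is a spurious strict local maximum of $f$ modulo global rotation.

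Finally, the $\SOtwo$-orbit of this configuration is a $1$-dimensional normally-hyperbolic attracting critical submanifold, so its basin of attraction under gradient ascent is an open subset of $(\mathbb{S}^1)^n$, of positive Lebesgue measure, and is therefore reached with positive probability from a uniformly random initialization. The main obstacle throughout is uniformity in $n$---both for the continuation step establishing the critical point when $n$ is not divisible by $5$, and for the perturbation-vs-gap comparison. Balanced cluster sizes are essential for the latter, since they make both the gap and the perturbation scale linearly in $n$, yielding a ratio independent of $n$ and hence a single $\varepsilon$ that works for all $n \geq 5$.
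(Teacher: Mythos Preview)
Your overall strategy matches the paper's: five-cluster pentagon configurations, an implicit function theorem argument for $n$ not divisible by $5$, and perturbation from the hard-threshold limit to small $\varepsilon > 0$. The paper packages this more cleanly via a repetition lemma (Lemma~\ref{proprepeatingpoints}): placing $q_i$ copies of each of $m = 5$ points converts the $n$-point unit-weight problem into the $5$-point problem with weights $qq^\top$, so all the hard work---the IFT jointly in $q$ and $\varepsilon$, and the Hessian signature---takes place in a \emph{fixed} $5$-dimensional problem. Uniformity in $n$ is then automatic rather than argued through your perturbation-versus-gap scaling.

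Two gaps remain in your write-up. First, for non-divisible $n$ you apply the IFT only at $\varepsilon = 0$ (in the cluster sizes) and then invoke Weyl to pass to $\varepsilon > 0$. But Weyl bounds eigenvalues, not criticality: the perturbed-angle configuration you found at $\varepsilon = 0$ is in general not critical for $\varepsilon > 0$, and a negative-definite Hessian at a non-critical point says nothing about local maxima. You must run the IFT jointly in the cluster sizes and $\varepsilon$, as the paper does in Lemma~\ref{lemmaIFT}. Second, ``check directly'' for small $n$ is not merely vague---the five-cluster ansatz is fragile there. For $n = 6$ with sizes $(2,1,1,1,1)$ and $\tau = 0$, the hard-threshold criticality equations (under nearest-neighbor-only coupling) force the two gaps adjacent to the heavy cluster down to roughly $0.51$ radians, so the corresponding second-neighbor cosine is about $0.51 > \tau$ and the assumed interaction structure is inconsistent with the solution. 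The paper instead handles each small $n$ with the regular $n$-gon (Lemma~\ref{propphiforeachn}) and then takes $\varepsilon$ to be the minimum of the finitely many resulting thresholds together with the $\delta$ from the IFT.
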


Intuitively, Theorem~\ref{thm:circles} relies on the fact that there are functions $\varphi$ and configurations $x_1, \dots, x_n$ such that \eqref{eq:f} \emph{locally} behaves much like an incomplete graph with linear $\varphi$
(in particular, a circulant graph in a ``twisted state'' configuration, which is a well-known source of spurious local maxima in the linear case; see, e.g., \cite{wiley2006syncbasin,townsend2020densenetworks}).

For that proof too, we reduce the claim to a landscape analysis.
Indeed, with the constructed function $\varphi$, the function $f$ is real-analytic with a spurious maximizer (non-synchronized states).
The maximizers of analytic functions are Lyapunov stable with respect to gradient flow owing to the {\L}ojasiewicz inequality (see for example \citep{absil2006stableequilibrium}).
This ensures that gradient flow converges to a spurious maximizer with positive probability.

Still, Theorem~\ref{thm:circles} does not exclude the possibility that the landscape is benign for $d = 2$ when $\varphi = \varphi_\beta$.\footnote{As an example, for fixed $n$, \citet[\S6]{sarlette2009synchcircle} construct a $\varphi$ such that the $n$ particles synchronize as long as the graph is connected, but it is different from $\varphi_\beta$ and it does not work for all $n$. \citet{tron2012so3consensus} take a similar approach on $\SOt$.}
We show that this is indeed true for $0 < \beta \leq 1$ in Section~\ref{sec:smallbeta}.
This follows as a corollary of our next theorem, which requires the Taylor expansion coefficients of $\varphi'$ to be nonincreasing.
That corollary improves on earlier results by \citet{geshkovski2025mathtransformers} which
required $\beta \lesssim 1/n$. 
After our paper appeared on arXiv, \citet{polyanskiy2025synchronizationcircle} proved synchronization on the circle for all $\beta \geq -0.16$, as a particularization of a more general result that is different from ours.
This indicates that the assumptions in Theorem~\ref{thm:taylorphi} are more stringent than necessary.

\begin{theorem} \label{thm:taylorphi}
    Fix $d \geq 2$.
    Assume $\varphi'(t) > 0$ and $\varphi''(t) \geq 0$ for all $t \in [-1, 1]$, and also that 
    \begin{align*}
        \varphi'(t) = \sum_{\ell = 0}^\infty a_\ell t^\ell && \textrm{ for all } && t \in [-1, 1] && \textrm{ with } &&
        a_0 > a_1 \geq a_2 \geq a_3 \geq \cdots.
    \end{align*}
    Let the weights of the graph satisfy $w_{ij} = q_iq_j$ for some $q \in \Rn$ with positive entries (in particular, the graph is complete).
    Then all the same conclusions as in Theorem~\ref{thm:spheres} apply.
    The assumption $a_0 > a_1$ can be relaxed to $a_0 \geq a_1$ if $\varphi''(t) > 0$ for all $t \in [-1, 1]$.
\end{theorem}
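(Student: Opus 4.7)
The plan is to exploit the rank-one weight structure $w_{ij}=q_iq_j$ and the Taylor expansion of $\varphi'$ to rewrite $f$ in moment form and read off the Hessian cleanly.

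\textbf{Step 1 (moment form and global maxima).} Integrating $\varphi'$ term by term and using the identity $\sum_{i,j}q_iq_j(x_i^\top x_j)^k = \|M_k\|_F^2$ with the symmetric moment tensor $M_k := \sum_i q_i x_i^{\otimes k}$, I would first rewrite
\begin{equation*}
 f = \mathrm{const} + \sum_{k\geq 1}\frac{a_{k-1}}{2k}\|M_k\|_F^2.
\end{equation*}
By the triangle inequality $\|M_k\|_F \leq \sum_i q_i$, with equality iff the $x_i$ align (for odd $k$) or align up to sign (for even $k$); since $a_0>0$ this pins down the global maxima of $f$ as exactly the synchronized configurations, so the substantive task is to rule out spurious NSD critical points.

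\textbf{Step 2 (Hessian structure).} For $d\geq 3$ the argument of Theorem~\ref{thm:spheres} already applies without the Taylor hypothesis, so the substantive case is $d=2$. Parameterizing each $x_i$ by an angle $\theta_i$ and setting $h(\delta):=\varphi'(\cos\delta)\sin\delta$, the Riemannian Hessian becomes a signed graph Laplacian
\begin{equation*}
 \alpha^\top H \alpha = -\sum_{i<k}q_iq_k\, h'(\theta_i-\theta_k)(\alpha_i-\alpha_k)^2.
\end{equation*}
Testing NSD with the complex tangent directions $\alpha_i = e^{im\theta_i}$ (equivalently, with their real and imaginary parts as two real tangent vectors) for integers $m\geq 1$ yields, for each $m$, the Fourier-moment inequality
\begin{equation*}
 \sum_{s\geq 1} s c_s\left(|Q_s|^2 - \tfrac{1}{2}|Q_{s+m}|^2 - \tfrac{1}{2}|Q_{s-m}|^2\right)\geq 0,
\end{equation*}
where $Q_s := \sum_i q_i e^{is\theta_i}$, $|Q_0|^2 := (\sum_i q_i)^2$, and the $c_s\geq 0$ are the sine-Fourier coefficients of $h$, which one can compute as nonnegative linear combinations of the $a_\ell$.

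\textbf{Step 3 (forcing synchronization).} The heart of the proof is to combine these inequalities across $m$ with suitable nonnegative weights to conclude that $|Q_1|^2 = (\sum_i q_i)^2$, which is equivalent to synchronization. This is where the Taylor hypothesis $a_0 > a_1 \geq a_2 \geq \cdots$ enters decisively: I expect it to force a sign pattern on the combined coefficients so that in the resulting weighted inequality, the bracket $|Q_1|^2 - (\sum_i q_i)^2$ receives a strictly negative weight while every other bracket $|Q_s|^2 - (\sum_i q_i)^2$ receives a nonpositive weight; since all these brackets are themselves nonpositive, the only way the inequality can hold is with each of them equal to zero. The main technical obstacle is to identify and verify these weights explicitly; they should be encoded by a generating function built from the $a_\ell$'s, so that Taylor monotonicity directly yields the desired sign pattern. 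The strict inequality $a_0 > a_1$ is precisely what gives $|Q_1|$ a strictly negative net weight at the end; the relaxation to $a_0 \geq a_1$ under $\varphi''(t) > 0$ then follows because the strictly positive $\varphi''$ contributes an additional nonvanishing term to $h'(\delta) = \varphi'(\cos\delta)\cos\delta - \varphi''(\cos\delta)\sin^2\delta$, which keeps the combined inequality strict at the boundary case $a_0 = a_1$.
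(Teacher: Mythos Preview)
Your Step~3 has a genuine gap: the argument never uses the first-order (criticality) condition, and second-order information alone cannot force synchronization. Concretely, take the linear case $\varphi'(t)=1$ (so $a_0=1$, $a_\ell=0$ for $\ell\geq 1$, which satisfies your hypothesis), $n=2$, $q=(1,1)$, and angles $\theta_1=0,\ \theta_2=\delta$ with $\delta\in(0,\pi/2]$. Then the Laplacian $L(M)$ from Lemma~\ref{lem:socpcircles} equals $\cos\delta\,\bigl(\begin{smallmatrix}1&-1\\-1&1\end{smallmatrix}\bigr)\succeq 0$, so the Hessian is NSD, yet $|Q_1|^2=2+2\cos\delta<4=|Q_0|^2$. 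Since the point is genuinely NSD, it passes \emph{every} Fourier test you could run, so no nonnegative combination of your inequalities can yield $|Q_1|^2=(\sum_i q_i)^2$. The missing ingredient is $\grad f(X)=0$; without it the conclusion is simply false.

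By contrast, the paper's proof leans on the gradient condition in an essential way. It compresses the matrix inequality $\ddiag(X^\top XA)-A\odot X^\top X\succeq L(K)$ to size $2\times 2$ by multiplying on both sides by $X$ and $X^\top$; the first-order condition then replaces $X\,\ddiag(X^\top XA)\,X^\top$ by $XAX^\top$, which is exactly what unlocks the Taylor expansion of $\varphi'(t)(1-t)$. The monotonicity $a_\ell\geq a_{\ell+1}$ enters via the Schur product theorem (each $W\odot(X^\top X)^{\odot\ell}\succeq 0$), leading to the rank-one-dominated inequality $a_0(Xq)(Xq)^\top\succeq(a_0-a_1)(ZZ^\top)^2+XL(K)X^\top$. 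When $a_0>a_1$ this forces $\rank(X)<2$, and Lemma~\ref{lem:hemisphere} finishes. The case $a_0=a_1$ with $\varphi''>0$ is handled by a separate kernel analysis of $L(K)$, not by a perturbation of the same inequality as you sketch.

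Even if you patch Step~3 by adding the gradient relations $\sum_j q_j h(\theta_i-\theta_j)=0$, you would be trying to control \emph{bilinear} Fourier moments (products like $Q_{s-m}\overline{Q_s}$) rather than the quadratic moduli $|Q_s|^2$, and your ``suitable nonnegative weights'' remain unidentified. That is the heart of the difficulty, and your proposal does not yet contain an idea for it.
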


\begin{corollary}[Small $\beta$] \label{cor:smallbeta}
    Theorem~\ref{thm:taylorphi} applies for $\varphi(t) = t$ and for $\varphi(t) = \varphi_\beta(t)$ with $0 < \beta \leq 1$.
    It also applies for $\varphi(t) = -\log(1-t+\epsilon)$ for all $\epsilon > 0$ (Thomson's problem corresponds to $\epsilon = 0$).
\end{corollary}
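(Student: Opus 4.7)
The plan is to verify the hypotheses of Theorem~\ref{thm:taylorphi} for each of the three candidate functions. In each case I need to (i) check that $\varphi'(t)>0$ and $\varphi''(t)\geq 0$ on $[-1,1]$, (ii) write down the Maclaurin series of $\varphi'$ and confirm convergence on $[-1,1]$, and (iii) verify the coefficient inequality $a_0 > a_1 \geq a_2 \geq \cdots$ (or the relaxed version $a_0 \geq a_1$, when $\varphi''$ is strictly positive everywhere). All three checks are direct, so the statement reduces to an elementary ratio-of-coefficients computation; I expect no real obstacle, only three small verifications.

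For the linear case $\varphi(t)=t$, we have $\varphi'(t)=1$, $\varphi''(t)=0$, and the expansion is trivially $a_0=1$, $a_\ell=0$ for $\ell\geq 1$, so $a_0>a_1=a_2=\cdots=0$.

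For $\varphi(t)=\varphi_\beta(t)=\frac{1}{\beta}e^{\beta t}$ with $0<\beta\leq 1$, I would write $\varphi'(t)=e^{\beta t}=\sum_{\ell\geq 0}\frac{\beta^\ell}{\ell!}t^\ell$ (converging on all of $\reals$), giving $a_\ell=\beta^\ell/\ell!$, and note that $\varphi'(t)>0$ and $\varphi''(t)=\beta e^{\beta t}>0$ strictly. The ratio $a_{\ell+1}/a_\ell=\beta/(\ell+1)\leq 1$ for all $\ell\geq 0$ under $\beta\leq 1$, with strict inequality once $\ell\geq 1$. Thus $a_0\geq a_1\geq a_2\geq\cdots$; when $\beta<1$ this is $a_0>a_1$, and when $\beta=1$ we have $a_0=a_1=1$ but $\varphi''>0$ everywhere, so the relaxed form of Theorem~\ref{thm:taylorphi} applies.

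For $\varphi(t)=-\log(1-t+\epsilon)$ with $\epsilon>0$, set $c=1+\epsilon>1$. Then $\varphi'(t)=\frac{1}{c-t}>0$ on $[-1,1]$ and $\varphi''(t)=\frac{1}{(c-t)^2}>0$, and by the geometric series
\begin{align*}
\varphi'(t)=\frac{1}{c}\cdot\frac{1}{1-t/c}=\sum_{\ell=0}^\infty \frac{1}{c^{\ell+1}}\,t^\ell,
\end{align*}
which converges absolutely on $|t|<c$, hence on $[-1,1]$. Thus $a_\ell=1/c^{\ell+1}$ and the ratio $a_{\ell+1}/a_\ell=1/c<1$, yielding $a_0>a_1>a_2>\cdots$. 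Since all three cases satisfy the hypotheses of Theorem~\ref{thm:taylorphi}, the corollary follows. \qed
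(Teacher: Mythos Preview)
Your proposal is correct and follows exactly the approach the paper implicitly intends: the paper states Corollary~\ref{cor:smallbeta} without proof, treating it as an immediate verification of the hypotheses of Theorem~\ref{thm:taylorphi}, and your write-up carries out precisely that check for each of the three functions. The only point worth noting is your correct use of the relaxed hypothesis ($a_0 \geq a_1$ with $\varphi''>0$) at $\beta=1$, which is exactly why the theorem includes that clause.
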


In place of the condition on the Taylor coefficients of $\varphi'$ in Theorem \ref{thm:taylorphi}, a variation of that theorem also holds under a similar condition on the Fourier coefficients of $\theta \mapsto \varphi'(\cos\theta)$; this condition is also satisfied (in particular) for $\varphi=\varphi_\beta$ with $0<\beta\leq 1$.

\citet{geshkovski2025mathtransformers} also argue that synchronization occurs almost surely for $\beta \gtrsim n^2$.
We repeat their argument in Appendix~\ref{sec:largebeta} with minor changes to handle an arbitrary connected graph $W$ and more general $\varphi$, and to make all quantities explicit.

\begin{theorem} \label{thm:largebeta}
    Fix $n \geq 1$ and $d = 2$.
    Assume $\varphi'(t) > 0$ for all $t \in [-1, 1]$, and also that
    \begin{align}
    	t \varphi'(t) - (1-t^2) \varphi''(t) < 0 && \textrm{ for all } && -1 \leq t < \cos\!\Big(\frac{\pi}{n}\Big).
        \label{conditiononvarphiforlargebeta}
    \end{align}
    Assume the graph given by weights $w_{ij} \geq 0$ is connected.
    Then all the same conclusions as in Theorem~\ref{thm:spheres} apply.
    Condition~\eqref{conditiononvarphiforlargebeta} holds in particular when $\varphi''(t) \geq \frac{n^2}{\pi^2} \varphi'(t)$ for all $t \in [-1,1]$.
\end{theorem}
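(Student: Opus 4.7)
The plan is to parametrize each circle by an angle, reduce negative semidefiniteness of the Hessian at a critical point to positive semidefiniteness of a signed graph Laplacian, and then derive a contradiction for any non-synchronized critical point by splitting on the size of the largest gap on the circle.

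With $x_i = (\cos\theta_i, \sin\theta_i)$, a direct computation gives that for any angular perturbation $\epsilon \in \reals^n$,
\begin{align*}
\text{Hess}[f](\epsilon, \epsilon) = -\frac{1}{2}\sum_{i,j=1}^{n} a_{ij}(\epsilon_i - \epsilon_j)^2, \quad a_{ij} = w_{ij}\bigl[\,t_{ij}\varphi'(t_{ij}) - (1-t_{ij}^2)\varphi''(t_{ij})\,\bigr],
\end{align*}
with $t_{ij} = x_i\transpose x_j$. Thus the Hessian is negative semidefinite precisely when the signed Laplacian with weights $a_{ij}$ is positive semidefinite, and the contrapositive of~\eqref{conditiononvarphiforlargebeta} says $a_{ij} < 0$ strictly whenever $w_{ij} > 0$ and the angular distance $d(\theta_i, \theta_j)$ exceeds $\pi/n$. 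Suppose for contradiction that there is a non-synchronized critical point with negative semidefinite Hessian. Sort the $\theta_i$ on $[0, 2\pi)$, let $\gamma_1, \ldots, \gamma_n$ denote the consecutive cyclic gaps, and rotate so that the wrap-around gap $\gamma_n$ is maximal; the points then all lie in the arc $[\theta_1, \theta_n]$ of length $2\pi - \gamma_n$.

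I would then split on $\gamma_n$. If $\gamma_n \geq \pi + \pi/n$, the arc has length $\leq \pi - \pi/n < \pi$, and the critical-point equation at the leftmost point, $\sum_j w_{1j}\varphi'(t_{1j})\sin(\theta_j - \theta_1) = 0$, is a sum of nonnegative terms; each term must therefore vanish, forcing $\theta_j = \theta_1$ for every graph neighbor of vertex $1$, and connectivity propagates this to yield $\theta_1 = \cdots = \theta_n$, a contradiction. Otherwise $\gamma_n < \pi + \pi/n$; pigeonhole on $\gamma_1 + \cdots + \gamma_{n-1} = 2\pi - \gamma_n$ then produces an inner gap $\gamma_k > \pi/n$. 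Partition $S_1 = \{1, \ldots, k\}$, $S_2 = \{k+1, \ldots, n\}$ and test the Hessian on $\epsilon_i = \one_{S_1}(i)$. For any cross-pair $(i, j) \in S_1 \times S_2$ the angular distance obeys $d(\theta_i, \theta_j) \geq \min(\gamma_k, \gamma_n) > \pi/n$, so $a_{ij} \leq 0$ with strict inequality whenever $w_{ij} > 0$; connectivity guarantees at least one such cross-weight is positive, so $\text{Hess}[f](\epsilon, \epsilon) = -\sum_{i \in S_1, j \in S_2} a_{ij} > 0$, contradicting negative semidefiniteness. Combined with the center-stable manifold argument recalled before Theorem~\ref{thm:spheres}, this yields the claimed almost-sure convergence.

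The main obstacle I expect is the geometric bound $d(\theta_i, \theta_j) \geq \min(\gamma_k, \gamma_n)$ in the spread-out case: an $(i, j)$ cross-pair could a priori be close on the circle by going the ``wrong way'' around, and the choice of $\gamma_n$ as the maximal gap is exactly what rules this out. A minor care-point is that the hypotheses do not include $\varphi'' \geq 0$, so Theorem~\ref{thm:spheres} cannot be invoked directly; but the argument above uses only $\varphi'(t) > 0$ and condition~\eqref{conditiononvarphiforlargebeta}, so this poses no obstruction.
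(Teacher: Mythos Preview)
Your proof is correct and takes a genuinely different route from the paper's.

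The paper (Appendix~\ref{sec:largebeta}) argues constructively: positive semidefiniteness of the Laplacian $L(M)$ implies that \emph{every} cut $(S,S^c)$ contains at least one cross-edge with $h(x_i\transpose x_j^{}) \geq 0$, hence $\dist(x_i,x_j)\leq \pi/n$. Growing $S$ greedily from a singleton then produces a spanning tree whose edges all have length $\leq \pi/n$; since an $n$-node tree has radius at most $n/2$, all points sit within $\pi/2$ of a central node, and the closed-hemisphere Lemma~\ref{lem:hemisphere} concludes.

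Your approach exploits the cyclic gap structure instead. In Case~1 the points already occupy an arc of length strictly less than $\pi$, so first-order conditions at the leftmost vertex (propagated through the graph) force synchronization directly---this is essentially the \emph{open}-hemisphere argument, so you never need the more delicate closed-hemisphere lemma. In Case~2, pigeonhole manufactures a second gap exceeding $\pi/n$; splitting there gives a single cut where \emph{every} cross-pair is at distance $>\pi/n$ (the maximal gap $\gamma_n$ blocks the ``wrong way'' around), so the indicator test vector yields a strictly positive Hessian value, contradicting negative semidefiniteness. The two proofs are dual: the paper shows every cut has a short cross-edge and builds a tree; you exhibit one cut with all long cross-edges and stop. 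Your version is a little more self-contained (no tree-radius step, no appeal to Lemma~\ref{lem:hemisphere}); the paper's spanning-tree idea is more portable to settings lacking a natural cyclic order.

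One small omission: you do not verify the final sentence of the theorem, that $\varphi''(t)\geq \tfrac{n^2}{\pi^2}\varphi'(t)$ implies condition~\eqref{conditiononvarphiforlargebeta}. That is a short calculus check independent of the landscape argument.
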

%
\begin{corollary}[{\protect large $\beta$, \citet{geshkovski2025mathtransformers}}] \label{cor:largebeta}
    Fix $n \geq 1$ and $d \geq 2$.
    Let $\varphi = \varphi_\beta$ with $\beta \geq \frac{n^2}{\pi^2}$, and consider a connected graph with weights $w_{ij} \geq 0$.
    Then all the same conclusions as in Theorem~\ref{thm:spheres} apply.
\end{corollary}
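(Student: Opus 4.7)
The plan is to split by ambient dimension and invoke the two theorems just proved.

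For $d \geq 3$, Theorem~\ref{thm:spheres} applies directly: the graph is assumed connected, $\varphi_\beta'(t) = e^{\beta t} > 0$, and $\varphi_\beta''(t) = \beta e^{\beta t} \geq 0$ on $[-1,1]$, so all three hypotheses of Theorem~\ref{thm:spheres} are satisfied. The conclusion of that theorem is then the conclusion of the corollary.

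For $d = 2$, the plan is to verify the simpler sufficient condition stated at the end of Theorem~\ref{thm:largebeta}, namely $\varphi''(t) \geq \frac{n^2}{\pi^2}\varphi'(t)$ on $[-1,1]$. Here one uses the convenient identity $\varphi_\beta''(t) = \beta \varphi_\beta'(t)$, which reduces the pointwise comparison to the single scalar inequality $\beta \geq n^2/\pi^2$; this is precisely the assumption of the corollary. Positivity of $\varphi_\beta'$ is immediate, and connectedness of the graph is assumed. Hence Theorem~\ref{thm:largebeta} applies and yields the same conclusions as Theorem~\ref{thm:spheres}.

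There is no real obstacle: the corollary is essentially a one-line verification once both preceding theorems are available. The only care needed is the cosmetic split between $d = 2$ (handled by Theorem~\ref{thm:largebeta}) and $d \geq 3$ (handled by Theorem~\ref{thm:spheres}), together with the observation that the exponential choice $\varphi = \varphi_\beta$ makes $\varphi''/\varphi'$ a constant, so the quantitative threshold in Theorem~\ref{thm:largebeta} translates cleanly into the threshold $\beta \geq n^2/\pi^2$ on the inverse temperature.
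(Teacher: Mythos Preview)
Your argument is correct and is exactly the route the paper intends: the corollary is left without explicit proof because it follows immediately from Theorem~\ref{thm:spheres} for $d \geq 3$ and from the sufficient condition $\varphi'' \geq \tfrac{n^2}{\pi^2}\varphi'$ in Theorem~\ref{thm:largebeta} for $d = 2$, the latter being the identity $\varphi_\beta'' = \beta\,\varphi_\beta'$ combined with $\beta \geq n^2/\pi^2$.
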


It remains open whether the same holds for values of $\beta$ between 1 and $\frac{n^2}{\pi^2}$ when $d = 2$ and the graph is merely connected (as opposed to the more restrictive assumptions above).

\begin{remark}
To model transformers, \citet{geshkovski2025mathtransformers} consider two dynamical systems: SA (for self-attention) and USA (for unnormalized self-attention).
Problem~\eqref{eq:f} corresponds to USA.
They note that the two models correspond to gradient flow on the same energy (namely, $f$ with $\varphi = \varphi_\beta$) only with two different Riemannian metrics on the product of spheres.
Whether a point is critical or not is independent of the Riemannian metric, and the same goes for the definiteness of the Hessian at critical points.
Thus, landscape analyses apply directly to both models and provide the same conclusions regarding limit points of the underlying dynamical system.
\end{remark}

Finally, we use a different proof in Appendix~\ref{sec:quadraticphi} to show that the landscape is benign in the quadratic case ($\varphi(t) = \frac{1}{2}t^2$, complete graph), although the global maxima are synchronized only up to sign.
Notice that $\varphi'(t) = t$ can be negative on $[-1, 1]$, hence this falls outside the scope of the main theorems above.
Since $\varphi$ has two maxima on the interval $[-1, 1]$, namely, $\pm 1$, the global maxima of $f$ correspond to points $x_1, \ldots, x_n \in \Sd$ all equal \emph{up to sign}.
The theorem below states that this maximization landscape is benign too.\footnote{In this scenario, problem~\eqref{eq:f} amounts to maximizing $f(X) = \frac{1}{4} \sqfrobnormsmall{X\transpose X}$ (Frobenius norm) where the columns of $X$ have unit norm. In contrast, the \emph{minimizers} of this problem are unit-norm tight frames. Remarkably, there are no spurious local minimizers either: see~\citep{benedetto2003finite} and~\citep{mixon2023benedettofickus}.}

\begin{theorem} \label{thm:quadraticphi}
    Fix $d \geq 2$ and $n \geq 1$.
    Let $\varphi(t) = \frac{1}{2} t^2$ and consider a complete graph with unit weights $w_{ij} = 1$.
    Then, critical points of~\eqref{eq:f} where the Hessian is negative semidefinite are global maxima of~\eqref{eq:f}.
    In particular: local maxima are global maxima, they are synchronized \emph{up to sign} ($x_i = \pm x_j$ for all $i,j$), and gradient flow converges to such a state from almost every initialization.
\end{theorem}

\section{Further related work} \label{sec:related}

Problem~\eqref{eq:f} is closely connected to the Kuramoto model for a network of coupled oscillators \citep{kuramoto1975oscillators,acebron2005kuramoto}, which has deep roots in the dynamical systems literature.
The ``homogeneous'' variant considers the following dynamics for $n$ time-varying angles $\theta_1, \dots, \theta_n$:
\begin{equation}
	\label{eq:kuramoto}
	\dot{\theta}_i = - \sum_{j = 1}^n w_{ij} \sin(\theta_i - \theta_j).
\end{equation}
This is precisely the gradient flow of \eqref{eq:f} in the linear case ($\varphi(t) = t$) with $d = 2$ under the change of variable $x_i = (\cos \theta_i, \sin \theta_i)$.
A basic question is: \emph{which graphs} 
have the property that the system converges to the synchronized state $\theta_1 = \cdots = \theta_n$ from almost every initial configuration?
The literature is vast: see the references below and the survey by \citet{doerfler2014synchronization} (particularly \S5). 
In the physics literature, we can also find studies of how \emph{stable} the synchronized state is, e.g., when the individual systems are not identical~\citep{parastesh2025synchrosimplicial}.
There is also work on synchronization of oscillators showing that, with higher-order interactions, networks can synchronize even with negative coupling~\citep{kovalenko2021contrarianssync}. 
For our purposes, the key results are the following.

For complete graphs and, more generally, sufficiently dense or expander-like graphs, the dynamical system \eqref{eq:kuramoto} synchronizes from almost every initialization \citep{sepulchre2007stabilization,taylor2012kuramoto,kassabov2021densekuramoto,abdalla2022expandersync}.
In contrast,
for sparse or structured connected graphs, the dynamics~\eqref{eq:kuramoto} have stable equilibria other than the synchronized state (equivalently, \eqref{eq:f} with $d = 2$ and linear $\varphi$ has spurious local maxima).
A rich source of such spurious configurations consists of ``twisted states'' on a circulant or otherwise ring-like graph \citep{wiley2006syncbasin,canale2015equilibria,townsend2020densenetworks,yoneda2021synclowerbound}.
One can also construct more exotic counterexamples, including graphs with manifolds of stable equilibria of arbitrary dimension \citep{sclosa2023kuramoto}.
%

In higher dimensions ($d \geq 3$),
less attention has been given to this problem.
Relevant works for synchronization on spheres include
\citep{olfatisaber2006sphereswarms,li2014controlspheres,caponigro2015sphereconsensus,lageman2016sphereconsensus},
though none of these guarantee synchronization almost surely. 
The key work for us is by \citet{markdahl2018nsphere},
who show that a broad class of consensus algorithms on the sphere succeed as long as the interaction graph is connected.
Hence, synchronization/consensus on spheres is fundamentally simpler than on circles.
One may also entertain synchronization on more general manifolds~\citep{sarlette2009consensus,markdahl2021multiplyconnected}.
The rotation groups are of particular interest in applications (and they constitute another way to generalize circles). 

If we minimize rather than maximize $f$, we obtain a packing problem.
These have been extensively studied in the literature (e.g., \citet{cohn2007universally}).
Packing on the circle or sphere is closely related to Smale's 7th problem and  Thomson's problem, which ask for the minimal energy configurations of charges constrained to lie on a circle or sphere.
Among this line of work, most relevant to us is the work of \citet{cohn1960chargescircle}, who considers Thomson's problem on the circle.
Using Morse theory, \citet{cohn1960chargescircle} completely characterizes the minima and critical points of $f$, and their signatures, when $d=2$ and $\varphi(t) = -\log(1-t)$ (and for other similar $\varphi$).
It is unclear to us how to apply the techniques of \citet{cohn1960chargescircle} to the present setting: crucially with $\varphi(t) = -\log(1-t)$, the Hessian of $f$ on the circle corresponds to a Laplacian with all \emph{nonpositive weights}.
This is a substantial departure from our setting:
see the open questions in Section~\ref{conclusions}.


Closer to the transformers literature, see also work by \citet{karagodin2024clustering} which aims to go beyond setting the key, query and value matrices to identity (as done in the parts of~\citep{geshkovski2025mathtransformers} that lead to the model under scrutiny here).
See also the work of \citet{rodriguezabella2024asymptoticattention} which is closely related.
And for a more algorithmic take on the view of deep transformers as \emph{discrete-time} dynamical systems evolving points on a sphere, see work on the nGPT architecture~\citep{loshchilov2024nGPT}.

\section{Riemannian geometry tools and optimality conditions} \label{sec:riemann_basics}
Endow $\Rd$ with the inner product $\inner{u}{v} = u\transpose v$.
The unit sphere $\Sd = \{ x \in \Rd : \|x\| = 1 \}$ is a Riemannian submanifold of $\Rd$: the tangent space $\T_x\Sd = \{ \dot x \in \Rd : x\transpose \dot x = 0 \}$ inherits the Euclidean inner product as a subspace of $\Rd$.
Formally, $f$ in~\eqref{eq:f} is defined on the product manifold
\begin{align*}
    \calM = (\Sd)^n = \{ X \in \Rdn : \diag(X\transpose X) = \one \}
\end{align*}
with the product Riemannian structure.
In this matrix notation, the points $x_1, \ldots, x_n$ are arranged as the columns of $X$, $\diag \colon \Rnn \to \Rn$ extracts the diagonal of a matrix, and $\one \in \Rn$ is the all-ones vector (sometimes denoted $\mathbf{1}_n$ if we want to emphasize the dimension).
The cost function is
\begin{align*}
    f(X) = \frac{1}{2} \innersmall{W}{\varphi(X\transpose X)},
\end{align*}
where $\inner{A}{B} = \trace(A\transpose B)$ is the Frobenius inner product, and $\varphi$ applies entrywise ($\varphi(A)_{ij} = \varphi(a_{ij})$).
The symmetric matrix $W \in \Rnn$ holds the graph weights $w_{ij} \geq 0$.

Based on these choices, we can derive expressions for the Riemannian gradient and Hessian of $f \colon \calM \to \reals$, and deduce necessary optimality conditions for~\eqref{eq:f}.
These are standard computations: see Appendix~\ref{app:gradhess}.

Since our results in Theorems~\ref{thm:circles} and \ref{thm:taylorphi} only require proofs for $d = 2$,
we only spell out the conditions for that case here.
This is simpler in part because the tangent space of a circle is one-dimensional, so that the Riemannian Hessian can be expressed as an ordinary $n \times n$ matrix.

The Riemannian Hessian for $d = 2$ exhibits a Laplacian structure, defined as follows.
Given a symmetric matrix $M \in \Rnn$, the Laplacian of the associated graph (where we think of $m_{ij}$ as the weight between nodes $i$ and $j$) is
\begin{align}
	L(M) & = \diag(M\one) - M,
	\label{eq:laplaciandef}
\end{align}
where $\diag \colon \Rn \to \Rnn$ forms a diagonal matrix.
As a quadratic form, it is well known that $\alpha\transpose L(M) \alpha = \frac{1}{2} \sum_{i = 1}^n \sum_{j = 1}^n m_{ij}(\alpha_i - \alpha_j)^2$.
In particular, if the weights $m_{ij}$ are nonnegative, then $L(M) \succeq 0$.
If, furthermore, the graph is connected, then $\ker L(M) = \spann(\one)$.

We now characterize second-order critical points.
In the following, $\ddiag \colon \Rnn \to \Rnn$ sets all off-diagonal entries of a matrix to zero,
    $\odot$ denotes the entrywise (Hadamard) matrix product, and $M \mapsto M^{\odot 2}$ denotes entrywise squaring.
\begin{lemma} \label{lem:socpcircles}
    Let $d = 2$. Assume $\varphi$ is twice continuously differentiable.  
    The eigenvalues of the Hessian of $f$ at $X \in \calM$ are equal to the eigenvalues of $-L(M)$, where $L(M)$ is the Laplacian~\eqref{eq:laplaciandef} for the graph with weights
     \begin{align}
        m_{ij} = w_{ij} h(x_i\transpose x_j^{}) && \textrm{ where } && h(t) = t \varphi'(t) - (1-t^2)\varphi''(t).
        \label{hesseqnintheta}
    \end{align}
    The Riemannian gradient at $X$ is zero (i.e., $X$ is critical) and the Riemannian Hessian at $X$ is negative semidefinite if and only if
    \begin{align}
        X\ddiag(X\transpose X A) = XA \quad\quad &\textrm{and} \quad\quad \ddiag(X\transpose X A) - A \odot X\transpose X \succeq L(K),
        \label{eq:socpcircles_cond} \\
    	\textrm{where} \quad\quad A = W \odot \varphi'(X\transpose X)
    	\quad\quad &\text{and} \quad\quad K = W \odot \varphi''(X\transpose X) \odot \big(\one\one\transpose - (X\transpose X)^{\odot 2}\big).
        \label{eq:A_K_def}
    \end{align}
\end{lemma}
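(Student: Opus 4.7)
My plan is to derive the Riemannian gradient and Hessian of $f$ from the ambient formula $\barf(X) = \frac{1}{2}\inner{W}{\varphi(X\transpose X)}$ using the standard tools for submanifolds of Euclidean space, and then specialize to $d=2$ via the angle parametrization. Entrywise differentiation, together with the symmetry of $W$ and $X\transpose X$, gives $\nabla\barf(X) = XA$ with $A = W\odot \varphi'(X\transpose X)$. The tangent space $\T_X\calM = \{U\in\Rdn : \ddiag(X\transpose U)=0\}$ admits the orthogonal projection $V\mapsto V - X\ddiag(X\transpose V)$, so the Riemannian gradient is $XA - X\ddiag(X\transpose X A)$. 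It vanishes iff $XA = X\ddiag(X\transpose XA)$, which is the first half of \eqref{eq:socpcircles_cond}.

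For the Hessian, the standard submanifold formula yields, for any $U\in\T_X\calM$,
\[\langle U, \Hess f(X)[U]\rangle = D^2\barf(X)[U,U] - \sum_j B_{jj}\|u_j\|^2,\]
with $B = \ddiag(X\transpose X A)$; the second term is the Weingarten correction accounting for the normal component $XB$ of $\nabla\barf(X)$ together with the shape operator of the sphere. A direct computation gives
\[D^2\barf(X)[U,U] = \tfrac{1}{2}\sum_{i,j}w_{ij}\varphi''(x_i\transpose x_j^{})(x_i\transpose u_j + u_i\transpose x_j)^2 + \sum_{i,j} w_{ij}\varphi'(x_i\transpose x_j^{})\, u_i\transpose u_j.\]
Now I specialize to $d=2$ by writing $x_i = (\cos\theta_i,\sin\theta_i)$ and parametrizing tangent vectors as $u_i = \alpha_i x_i^\perp$ for $\alpha\in\Rn$; the map $\alpha\mapsto U$ is an isometry from $\Rn$ onto $\T_X\calM$. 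Elementary trigonometry yields $(x_i\transpose u_j + u_i\transpose x_j)^2 = (\alpha_i-\alpha_j)^2(1-(x_i\transpose x_j^{})^2)$, $u_i\transpose u_j = \alpha_i\alpha_j\, x_i\transpose x_j^{}$, and $B_{jj} = \sum_k w_{jk}\varphi'(x_j\transpose x_k^{}) x_j\transpose x_k^{}$. Symmetrizing the $\varphi'$ contributions (from $D^2\barf$ and from the Weingarten term) via the identity $\alpha_i\alpha_j - \tfrac{1}{2}(\alpha_i^2+\alpha_j^2) = -\tfrac{1}{2}(\alpha_i-\alpha_j)^2$ collapses both the $\varphi'$ and $\varphi''$ pieces into a single Laplacian quadratic form,
\[\langle U, \Hess f(X)[U]\rangle = -\tfrac{1}{2}\sum_{i,j} w_{ij}\, h(x_i\transpose x_j^{})(\alpha_i-\alpha_j)^2 = -\alpha\transpose L(M)\alpha,\]
with $h$ and $M$ as in \eqref{hesseqnintheta}. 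Since $\alpha\mapsto U$ is an isometry, the eigenvalues of $\Hess f(X)$ are exactly those of $-L(M)$, establishing the first assertion of the lemma.

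To convert the Hessian condition $L(M)\succeq 0$ into the matrix inequality in \eqref{eq:socpcircles_cond}, I decompose $M = A\odot X\transpose X - K$ (immediate entrywise from the definition of $h$ and from $A,K$ in \eqref{eq:A_K_def}) and use linearity to split $L(M) = L(A\odot X\transpose X) - L(K)$. The identity $L(A\odot X\transpose X) = \ddiag(X\transpose X A) - A\odot X\transpose X$ follows from $[(A\odot X\transpose X)\one]_i = \sum_j A_{ij}(X\transpose X)_{ij} = [X\transpose XA]_{ii}$ by symmetry of $A$ and $X\transpose X$. Substituting yields the stated semidefinite inequality. The one genuinely delicate step is bookkeeping the Weingarten correction in the Hessian; once that is in hand, everything else reduces to trigonometric identities and Hadamard-product algebra.
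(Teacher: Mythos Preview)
Your proposal is correct and follows essentially the same route as the paper. The paper first records the general Riemannian gradient/Hessian in Lemma~\ref{lem:gradhess} (where the Weingarten correction is absorbed into the term $-\innersmall{\dot X\transpose\dot X}{S}$ with $S=\ddiag(X\transpose XA)-A$) and then specializes to $d=2$ via the same parametrization $\dot X = JX\diag(\alpha)$, i.e., $u_i=\alpha_i x_i^\perp$; you instead compute $D^2\barf$ and the Weingarten term separately before combining. The trigonometric identities, the symmetrization $\alpha_i\alpha_j-\tfrac12(\alpha_i^2+\alpha_j^2)=-\tfrac12(\alpha_i-\alpha_j)^2$, and the final Laplacian splitting $L(M)=L(A\odot X\transpose X)-L(K)$ via $[(A\odot X\transpose X)\one]_i=(X\transpose XA)_{ii}$ are all the same.
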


\section{A sufficient condition for synchronization on the circle: Theorem~\ref{thm:taylorphi}} \label{sec:smallbeta}

Given $q \in \Rn$ with positive entries, define the complete graph $W = qq\transpose$.
(The notation $q$ echoes classical work where each $x_i$ is a particle with charge $q_i$ and $f$ is their associated potential~\citep{cohn1960chargescircle}.)
From Theorem~\ref{thm:spheres}, we know that in dimension $d \geq 3$ it is sufficient for $\varphi$ to be a strictly increasing convex function to rule out spurious local maxima for $f$.
From Theorem~\ref{thm:circles}, we also know that this is not sufficient when $d = 2$.
Yet, experimentally, even with $d = 2$ we do not know of spurious maximizers when $\varphi = \varphi_\beta$~\eqref{eq:phibeta}.
Thus, it appears that $\varphi_\beta$ has additional favorable properties.

In this section, we show as much for $\beta$ up to 1.
Specifically, we prove Theorem~\ref{thm:taylorphi}.
Our argument relies crucially on the \emph{Schur product theorem}~\citep[Thm.~5.2.1]{Horn1991}.

To prove Theorem~\ref{thm:spheres} for spheres (see Appendix~\ref{app:spheres}), we chose a \emph{random} direction $\gamma$, and then moved all points $x_i$ in that same direction $\gamma$ using $\dot x_i = (I - x_i x_i\transpose) \gamma$.
This approach works for spheres, but falls short when applied to circles.
It is natural to try to choose the common direction $\gamma$ more purposefully.
Albeit indirectly, the proof below moves all the points in the direction of their \emph{weighted mean} $\gamma = Xq = \sum_{i=1}^n q_i x_i$.  See Remark~\ref{remarkonmovinginmeandirection} at the end of this section.

We start with a lemma showing that we are done if we can show $x_1, \dots, x_n$ lie in a common (closed) hemisphere (we also use this in the proof of Theorem~\ref{thm:largebeta}).
This strengthens existing results for an \emph{open} hemisphere by \citet[Prop.~12]{markdahl2018nsphere}. 
The rank-deficient case (which is treated separately in the proof) resembles familiar results for Burer--Monteiro relaxations (see, for example,~\citet[Thm.~7]{journee2010low}),
but the standard arguments used in that literature do not directly apply because we are \emph{maximizing} a convex function (and not minimizing).

\begin{lemma} \label{lem:hemisphere}
    Assume $\varphi'(t) > 0$ for all $t \in [-1, 1]$
    and that the graph with weights $w_{ij} \geq 0$ is connected.
    Let $X$ be a critical point of $f$ where the Hessian is negative semidefinite.
    If there exists a nonzero $v \in \Rd$ such that $X\transpose v \geq 0$ (entrywise; that is, the points $x_1, \ldots, x_n$ lie in a common closed hemisphere), then $X$ is a global maximum.
    Such a vector $v$ exists if $\rank(X) < d$.
\end{lemma}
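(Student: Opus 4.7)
The plan is to dispatch the easy existence statement first, then reduce the main implication to two disjoint extreme sub-cases by exploiting the first-order criticality condition. For existence, if $\rank(X) < d$ then $\col(X)^\perp \neq \{0\}$, and any nonzero $v$ in it satisfies $X\transpose v = 0$, which is $\geq 0$ entrywise. For the main implication, I would set $s_i = x_i\transpose v$ and partition $\{1,\dots,n\}$ into $I_0 = \{i : s_i = 0\}$ and $I_+ = \{i : s_i > 0\}$.

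The key reduction uses the first-order optimality condition on the product of spheres, which reads $\sum_j w_{ij} \varphi'(c_{ij})\, x_j = \lambda_i x_i$ for a scalar Lagrange multiplier $\lambda_i$ (with $c_{ij} = x_i\transpose x_j$). Dotting this identity with $v$ yields $\sum_j w_{ij} \varphi'(c_{ij})\, s_j = \lambda_i s_i$. For $i \in I_0$ the right-hand side is zero while every summand on the left is nonnegative (using $w_{ij} \geq 0$, $\varphi' > 0$, and $s_j \geq 0$), forcing each summand to vanish and hence $w_{ij} = 0$ for every $j \in I_+$. There are therefore no edges of the weighted graph between $I_0$ and $I_+$; by connectivity one of the two sets must be empty. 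The hemisphere hypothesis at a critical point thus splits cleanly into the \emph{degenerate} case $X\transpose v = 0$ (all $s_i = 0$) and the \emph{strict open hemisphere} case $X\transpose v > 0$ entrywise.

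For the degenerate case I would plug the common direction $\xi_i = v$ into the Hessian; this is a legitimate tangent at each $x_i$ because $x_i\transpose v = 0$. Working along the geodesic that rotates $x_i$ toward $v/\|v\|$ inside the 2-plane $\spann(x_i, v)$, one finds $c_{ij}(t) = c_{ij} + (1 - c_{ij})\sin^2(t\|v\|)$ and hence $\Hess f(X)[\xi,\xi] = \|v\|^2 \sum_{i,j} w_{ij}\,\varphi'(c_{ij})\,(1 - c_{ij})$, which is nonnegative by the hypothesis on $\varphi'$. The assumption $\Hess f(X) \preceq 0$ then forces this sum to vanish, and $\varphi'(c_{ij}) > 0$ implies $c_{ij} = 1$ whenever $w_{ij} > 0$, i.e., $x_i = x_j$ along every edge; connectivity propagates this to all $i$, so $X$ is a synchronized global maximum. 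For the strict open hemisphere case I would invoke the existing results of \citet[Prop.~12]{markdahl2018nsphere} and \citet[Lem.~4.2]{geshkovski2024transformers}. I expect the main obstacle to lie here: the natural common-translation tangent $\xi_i = (I - x_i x_i\transpose) v$ does not give a sign-definite Hessian quadratic form when the $s_i$'s are strictly positive, so a self-contained argument would need a cleverer tangent or an averaging trick. Fortunately, the first-order projection above is precisely what rules out ``mixed'' hemispheres at critical points of a connected graph, so the lemma reduces exactly to the two extremes we can handle.
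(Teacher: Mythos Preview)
Your proof is correct, but the overall organization differs from the paper's in a way worth noting.

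Both proofs handle the degenerate case $X\transpose v = 0$ identically: plug the common tangent direction $\dot X = v\one\transpose$ into the Hessian, obtain $\|v\|^2 \sum_{i,j} w_{ij}\varphi'(c_{ij})(1 - c_{ij}) \leq 0$, and conclude $x_i = x_j$ along every edge. The existence claim for $\rank(X) < d$ is also the same.

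Where you diverge is in the first-order step. You partition the indices into $I_0 = \{i : s_i = 0\}$ and $I_+ = \{i : s_i > 0\}$, observe (by dotting the criticality equation with $v$) that no edge can join $I_0$ to $I_+$, and then invoke connectivity to force one set empty. You then outsource the surviving open-hemisphere case to \citet[Prop.~12]{markdahl2018nsphere}. The paper instead keeps a single thread: it picks an index $i$ with $s_i$ \emph{minimal}, writes
\[
0 \;=\; \sum_{j} w_{ij}\,\varphi'(x_i\transpose x_j)\big((x_j\transpose v) - (x_i\transpose x_j)(x_i\transpose v)\big),
\]
and notes that every summand is nonnegative (using minimality of $s_i$ together with $|x_i\transpose x_j| \leq 1$ and $s_i \geq 0$). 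For a neighbor $j$, the resulting chain $s_i \leq s_j = (x_i\transpose x_j)s_i \leq s_i$ then directly yields either $x_i = x_j$ (if $s_i > 0$) or $s_j = 0$ (if $s_i = 0$), and a spanning-tree walk finishes both cases. So the paper never needs to cite the open-hemisphere result: that case is absorbed into the same elementary inequality.

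In short: your modular decomposition is clean and makes the role of connectivity very transparent, but it leans on an external citation. The paper's minimal-index trick is slightly slicker in that it is fully self-contained and handles the strict-hemisphere case with first-order information alone, without ever touching the Hessian there.
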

See Appendix~\ref{app:hemisphere} for a proof. With this, we can prove the main result of this section.
\begin{proof}[Proof of Theorem~\ref{thm:taylorphi}]
    Fix $d = 2$ (the case $d \geq 3$ holds by Theorem~\ref{thm:spheres}).
    Lemma~\ref{lem:socpcircles} provides
    \begin{align*}
        X\ddiag(X\transpose X A) = XA && \textrm{ and } && \ddiag(X\transpose X A) - A \odot X\transpose X \succeq L(K),
    \end{align*}
    where
    $L(K)$ is the Laplacian~\eqref{eq:laplaciandef}
    of the graph with weights
    $K_{ij} = w_{ij} \varphi''(x_i\transpose x_j^{})(1 - (x_i\transpose x_j^{})^2)$
    and $A = W \odot \varphi'(X\transpose X)$~\eqref{eq:A_K_def}.
    Since $\varphi'' \geq 0$, it follows that $K_{ij} \geq 0$ and
    so $L(K) \succeq 0$.
    
    Multiply the matrix inequality left and right by $X$ and $X\transpose$ to compress it to a $2\times 2$ matrix inequality.
    Crucially, this allows us to use the first-order condition:
    \begin{align}
        XL(K)X\transpose & \preceq X \left( \ddiag(X\transpose X A) - A \odot X\transpose X \right) X\transpose \nonumber\\
        &= X (A - A \odot X \transpose X) X\transpose \nonumber \\
        &= X(A \odot (\one \one\transpose - X\transpose X))X\transpose \nonumber \\
        & = X\left( W \odot \varphi'(X\transpose X) \odot (\one\one\transpose - X\transpose X) \right) X\transpose.
        \label{eq:foosmallbeta}
    \end{align}
    By the assumption on $\varphi'$, it holds that
    \begin{align*}
        \varphi'(t) (1-t) = a_0 - (a_0 - a_1) t - (a_1 - a_2) t^2 - \cdots
    \end{align*}
    for all $t \in [-1, 1]$, with $a_\ell - a_{\ell+1} \geq 0$ for all $\ell \geq 0$.
    In particular,
    \begin{align*}
        \varphi'(X\transpose X) \odot (\one\one\transpose - X\transpose X) & = a_0 \one \one\transpose - (a_0 - a_1) X\transpose X - (a_1 - a_2) (X\transpose X)^{\odot 2} - \cdots.
    \end{align*}
    Plugging this back into~\eqref{eq:foosmallbeta} yields
    \begin{align*}
        a_0 X W X\transpose \succeq  XL(K)X\transpose + X\!\left( (a_0 - a_1) W \odot X\transpose X + (a_1 - a_2) W \odot (X\transpose X)^{\odot 2} + \cdots  \right)\!X\transpose.
    \end{align*}
    The Schur product theorem states that $M, N \succeq 0 \implies M \odot N \succeq 0$.
    Using $X\transpose X \succeq 0$ and the assumption that $W = qq\transpose \succeq 0$ plus the assumption that $a_1-a_2\geq 0, a_2-a_3 \geq 0$ etc., 
    we deduce 
    \begin{align*}
        a_0 XWX\transpose \succeq (a_0-a_1) X( W \odot X\transpose X )X\transpose + XL(K)X\transpose.
    \end{align*}
    Since $W = qq\transpose$, we also have $W \odot X\transpose X = \diag(q) X\transpose X \diag(q)$.
    With $Z = X\diag(q)^{1/2}$, the above can be restated as:
    \begin{align}
        a_0 (Xq)(Xq)\transpose \succeq (a_0 - a_1) (ZZ\transpose)^2 + XL(K)X\transpose \succeq 0.
        \label{eq:matrixineqfoo}
    \end{align}
    The matrix on the left-hand side has rank 1.
    The matrix in the middle is positive semidefinite since $a_0 - a_1 \geq 0$ and $L(K) \succeq 0$.
    
    If $a_0 > a_1$, then the matrix in the middle of inequality~\eqref{eq:matrixineqfoo} must have rank equal to $\rank(Z) = \rank(X)$, using $q_i > 0$ for all $i$.
    The result then follows, because inequality~\eqref{eq:matrixineqfoo} therefore implies $\rank(X) < 2$, and Lemma~\ref{lem:hemisphere} handles $\rank(X) < 2$.
    
    For $a_0 = a_1$, we assume $\varphi'' > 0$ and take a closer look at $L(K)$.
    Since $w_{ij} = q_iq_j > 0$ and $\varphi''(x_i\transpose x_j^{}) > 0$, it follows that $K_{ij} = w_{ij} \varphi''(x_i\transpose x_j^{})(1 - (x_i\transpose x_j^{})^2) = 0$ if and only if $x_i\transpose x_j^{} = \pm 1$.
    We consider two cases.

    First, suppose $\dim \ker(L(K)) > 1$.
    This means the graph of $K$ is disconnected.
   	For $i,j$ in different connected components, $K_{ij} = 0$, so $x_i = \pm x_j$.
   	Since $K$ has at least two nonempty connected components,
   	this implies $x_i = \pm x_j$ for \emph{all} $i,j$ (simply observe that each $x_k$ is in a component that is different from that of $x_i$ or $x_j$).
    Thus, $\rank(X) = 1$ and the result again follows from Lemma~\ref{lem:hemisphere}.
   	
   	Second, suppose $\ker(L(K)) = \spann(\one)$.
   	Choose a unit $v \in \reals^2$ orthogonal to $Xq$.
   	Then \eqref{eq:matrixineqfoo} implies
   	\[
   		0 \geq (X\transpose v)\transpose L(K) (X\transpose v) \geq 0.
   	\]
   	Then we must have $X\transpose v \in \spann(\one)$.
   	In particular, negating $v$ if necessary, we have $X\transpose v \geq 0$ entrywise,
   	so the result once again follows from Lemma~\ref{lem:hemisphere}.
\end{proof}

\begin{remark}\label{remarkonmovinginmeandirection}
The proof above 
moves all points in the direction of their weighted mean $X q$.
To see this, note that all cases at the end of the proof are handled (explicitly or implicitly) by hitting~\eqref{eq:matrixineqfoo} with an appropriately chosen vector $v$ proportional to $J X q$,
where $J = \left(\begin{smallmatrix*} 0 & -1 \\ 1 & 0 \end{smallmatrix*}\right)$.
One can verify (see the proof of Lemma~\ref{lem:socpcircles} in Appendix~\ref{app:gradhess})
that this translates to tangent vectors $\dot x_i = (I - x_i^{} x_i\transpose) X q$.
\end{remark}


\section{Obstacles to synchronization on the circle: Theorem~\ref{thm:circles}} \label{sec:circles}

The circle ($d = 2$) stands out among ``spheres'' $\Sd$ in that it is not simply connected: the circle itself (a closed loop) cannot be continuously collapsed into a single point while staying on the circle, whereas, say, the equator can be collapsed to a single point on a sphere.
Accordingly, to construct non-synchronizing scenarios on the circle, it makes sense to entertain configurations of points that ``go around'' the circle.
The simplest such configuration is when all points $x_1, \ldots, x_n$ lie on a regular $n$-gon as follows (see Figure~\ref{fig:figurecircles}): 
\begin{align}
    x_{i} = (\cos\theta_i, \sin\theta_i) && \textrm{ with } && \theta_i = 2\pi\frac{i-1}{n} && \textrm{ for } && i = 1, \ldots, n.
    \label{eq:ngon}
\end{align}

In this section we prove Theorem~\ref{thm:circles}: when $d=2$ there exists a real-analytic $\varphi$ with $\varphi'>0, \varphi''>0$ such that $f(X) = \frac{1}{2} \innersmall{\one_n^{} \one_n\transpose}{\varphi(X\transpose X)}$ has a spurious local maximum for all $n \geq 5$.
Let us sketch the proof.
As a first step, we show that \emph{for each} $n$ there is a $\varphi_n$ such that the regular $n$-gon is a spurious local maximum of $f$ with $\varphi = \varphi_n$ (Lemma~\ref{propphiforeachn} below); this follows from a well-known result for linear synchronization on the circle (Lemma~\ref{propphiforrelu}).
In order to build a single $\varphi$ which works for all $n$ as in Theorem~\ref{thm:circles}, we then fix some integer $m$ (e.g., $m=5$) and distribute $n$ points on the regular $m$-gon with roughly $n/m$ points at each vertex.
This configuration may not be critical for $f$ with $\varphi = \varphi_m$ (e.g., if $n$ is not a multiple of $m$), but we argue that it is close to a spurious critical point (and hence that one exists) provided $n$ is large enough (combine Lemmas~\ref{proprepeatingpoints} and~\ref{lemmaIFT} below).
Finally, we exhibit a $\varphi$ that covers all $n \geq 5$.
Now let us proceed to the full proof.


For the linear synchronization problem on the circle ($\varphi(t) = t, d=2$), it is well known that the regular $n$-gon configuration~\eqref{eq:ngon} (with large enough $n$) is a spurious local maximum when the weight matrix $W$ corresponds to a (circulant) nearest-neighbor graph in which every node is connected to at most $68\%$ of its nearest neighbors.
This observation is due to~\citet{wiley2006syncbasin}.

In our setting, we can see this as taking the complete graph $W = \one\one\transpose$ and the ReLU-type function
\begin{align}
    \varphi(t) = \max\{0, t - \tau\}
    \label{relutypefunction}
\end{align}
with $\tau$ at least $\cos(0.68\pi)$: see Figure~\ref{fig:figurecircles}.
This is valid for large $n$.
To handle all $n \geq 5$, we reduce connectivity and require $\tau \geq \cos(.50\pi) = 0$.
We also require $\tau < \cos(\frac{2\pi}{5})$ to ensure that each point interacts with at least its two nearest neighbors.
The following lemma can be checked with formulas by~\citet[\S5.3]{ling2019synchronization}, who obtain the eigenvalues of the Hessian at the regular $n$-gon in closed form using a Fourier transform.

\begin{figure}
    \centering
    \includegraphics[width=0.4\linewidth]{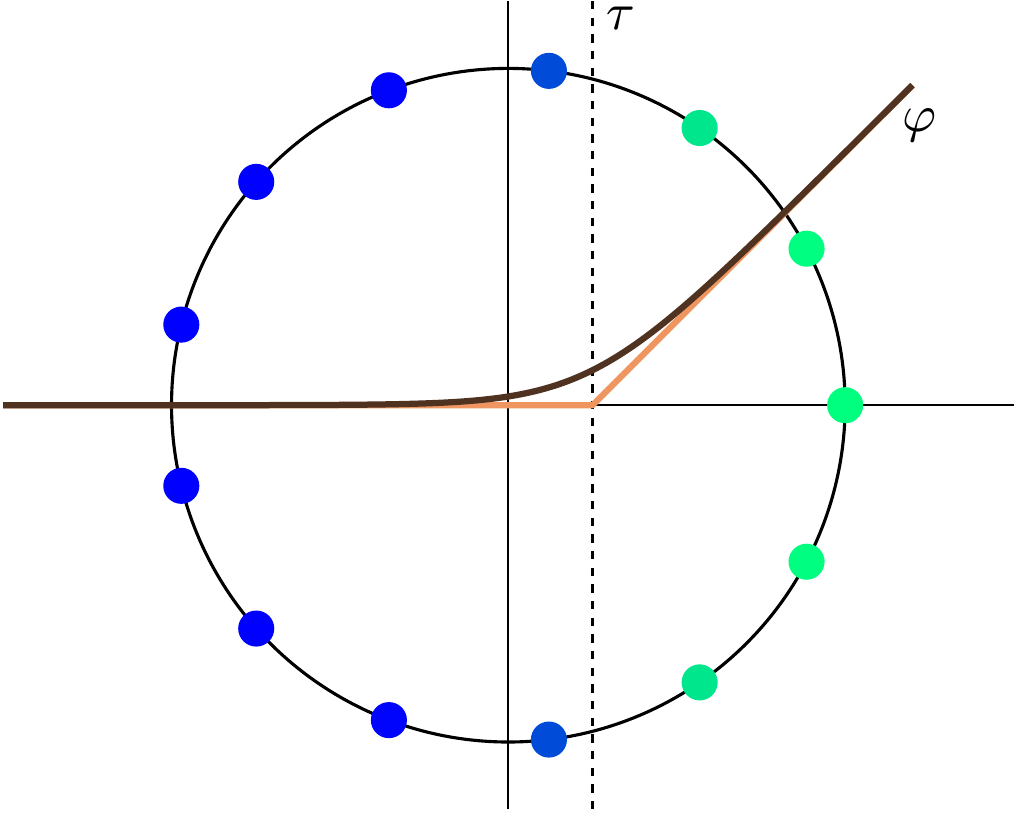}
    \caption{A regular $n$-gon on the circle ($n = 13, d = 2$). In Section~\ref{sec:circles}, we construct the function $\varphi$ by smoothing a ReLU such that $\varphi'(\cos(\theta_i - \theta_j))$ is about 1 when $\cos(\theta_i-\theta_j) > \tau$ and about 0 otherwise. Points are color-coded by $\varphi'(\cos(\theta_i - 0))$.}
    \label{fig:figurecircles}
\end{figure}

\begin{lemma}\label{propphiforrelu}[\citet{wiley2006syncbasin}, \citet[\S5.3]{ling2019synchronization}]
Given $n \geq 5$, let $x_1, \ldots, x_n$ lie on a regular $n$-gon~\eqref{eq:ngon}.
Fix $\tau \in [0, \cos(\frac{2\pi}{5}))$ such that $\tau \not \in \{\cos(2\pi \frac{i-1}{n}) : i = 1, \ldots, n\}$.\footnote{This condition ensures that none of the inner products $x_i \transpose x_j^{}$ for the $n$-gon lie on the kink of the ReLU.}
With
$\varphi$ as in~\eqref{relutypefunction}
and $W = \one_n^{}\one_n\transpose$, the point $X$ is critical for $f$, and $\Hess f(X)$ is negative\footnote{\label{note1}Throughout Section~\ref{sec:circles}, ``negative definite Hessian'' means all eigenvalues of the Hessian are negative except for the single zero eigenvalue which appears due to the global rotation symmetry of the problem.} definite.
\end{lemma}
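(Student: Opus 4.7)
The plan is to apply Lemma~\ref{lem:socpcircles} directly, after first dealing with the fact that $\varphi$ is only piecewise smooth. The avoidance condition on $\tau$ guarantees that no inner product $x_i\transpose x_j^{}$ on the $n$-gon equals $\tau$, so by continuity there is a neighborhood of $X$ in $\calM$ on which every such inner product stays strictly on one side of $\tau$. On that neighborhood $f$ is a smooth (in fact affine) function of the relevant inner products, and so Lemma~\ref{lem:socpcircles} may be applied with $\varphi'(t) = \mathbf{1}[t > \tau]$ and $\varphi''(t) = 0$ evaluated at the relevant arguments. This is the only place where the ReLU shape of $\varphi$ requires care, and I expect it to be the main---indeed essentially the only---subtlety of the proof.

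For the first-order condition, the idea is to exploit the rotational symmetry of both the regular $n$-gon and the complete weight matrix $W = \one_n^{}\one_n\transpose$. Passing to angular coordinates, $\partial f/\partial \theta_i = -\sum_j \mathbf{1}[\cos(\theta_i - \theta_j) > \tau]\,\sin(\theta_i - \theta_j)$; at the $n$-gon, the set of offsets $\theta_j - \theta_i$ for which the indicator is active is symmetric about $0$, so the odd summands cancel in pairs. Equivalently, one verifies the matrix identity $X \ddiag(X\transpose X A) = X A$ from Lemma~\ref{lem:socpcircles} by observing that, with $A = W \odot \varphi'(X\transpose X)$ circulant, the sum $\sum_k A_{ki} x_k^{}$ is a scalar multiple of $x_i^{}$ for every $i$.

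For the Hessian, plug $h(t) = t\varphi'(t) - (1-t^2)\varphi''(t) = t\,\mathbf{1}[t > \tau]$ into Lemma~\ref{lem:socpcircles} to obtain the Laplacian weights $m_{ij} = x_i\transpose x_j^{} \,\mathbf{1}[x_i\transpose x_j^{} > \tau]$. The decisive observation is that $\tau \geq 0$ forces every such weight to be \emph{nonnegative}: whenever $m_{ij} \neq 0$, in fact $m_{ij} = x_i\transpose x_j^{} > \tau \geq 0$. Moreover, since $n \geq 5$ and $\tau < \cos(2\pi/5) \leq \cos(2\pi/n)$, each pair of cyclically adjacent vertices satisfies $m_{i,i+1} > 0$, so the graph associated with $M$ contains the full $n$-cycle as a subgraph and is therefore connected. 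Standard Laplacian theory then yields $L(M) \succeq 0$ with $\ker L(M) = \spann(\one)$, and via Lemma~\ref{lem:socpcircles} this translates into $\Hess f(X)$ being negative semidefinite with a single zero eigenvalue, the global rotation mode allowed for in footnote~\ref{note1}.

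The proof therefore requires no Fourier or circulant eigenvalue computation of the kind carried out in the cited references: the hypothesis $\tau \geq 0$ places us in the regime of uniformly nonnegative Hessian weights, for which connectivity alone suffices. The cited works are needed only when one wants to push $\tau$ below zero (so that some $m_{ij}$ become negative and the naive Laplacian argument breaks down); our restricted range of $\tau$ lets us sidestep that subtlety entirely, at the cost of needing to handle only $n\geq 5$ rather than the tighter threshold $n\gtrsim 1/(1-0.68)$ implicit in~\citet{wiley2006syncbasin}.
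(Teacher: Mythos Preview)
Your argument is correct and is genuinely more elementary than the route the paper points to. The paper does not actually prove this lemma; it defers to the closed-form Fourier diagonalization of the circulant Hessian carried out in \citet[\S5.3]{ling2019synchronization} and the stability analysis of \citet{wiley2006syncbasin}. You bypass that entirely by noticing that the specific hypothesis $\tau \geq 0$ forces every off-diagonal weight $m_{ij} = x_i\transpose x_j^{}\,\mathbf{1}[x_i\transpose x_j^{} > \tau]$ to be nonnegative, while $\tau < \cos(2\pi/5) \leq \cos(2\pi/n)$ guarantees the associated graph contains the $n$-cycle and is therefore connected. At that point standard Laplacian theory delivers $L(M)\succeq 0$ with one-dimensional kernel, and Lemma~\ref{lem:socpcircles} converts this directly into the claimed Hessian signature---no eigenvalue computation required. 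The Fourier route, by contrast, works for the full range $\tau \geq \cos(0.68\pi)$ (where some $m_{ij}$ are negative and your sign argument breaks), but for the restricted range stated in the lemma it is overkill; your approach is both shorter and conceptually cleaner, and your closing paragraph identifies exactly this trade-off.
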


The next step is simple: we smooth the ReLU~\eqref{relutypefunction} and, by continuity, the regular $n$-gon remains a spurious local maximum.
In order to ensure $\varphi$ is real-analytic, we apply the log-sum-exp smoothing
\begin{align}
  \varphi_{\varepsilon, \tau}(t) = \varepsilon \log(1 + e^{(t-\tau)/\varepsilon})
  \label{eq:phi-softmax}
\end{align}
with parameter $\varepsilon > 0$.

\begin{lemma}\label{propphiforeachn}
  Let $n \geq 5$ and fix $\tau$ as in Lemma~\ref{propphiforrelu}.
  There exists $\varepsilon_{n, \tau} > 0$ such that, for all $\varepsilon \in (0, \varepsilon_{n, \tau}]$, the regular $n$-gon configuration $X$ is critical for $f$ with $W = \one_n^{}\one_n\transpose$ and $\varphi = \varphi_{\varepsilon, \tau}$~\eqref{eq:phi-softmax}.
  Moreover, $\Hess f(X)$ is negative\textsuperscript{\ref{note1}} definite.
\end{lemma}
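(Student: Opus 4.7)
The plan is to derive this lemma from Lemma~\ref{propphiforrelu} via a continuity argument as $\varepsilon \downarrow 0$. Direct computation gives $\varphi_{\varepsilon, \tau}'(t) = \sigma((t-\tau)/\varepsilon)$ and $\varphi_{\varepsilon, \tau}''(t) = \varepsilon^{-1} \sigma((t-\tau)/\varepsilon)\bigl(1 - \sigma((t-\tau)/\varepsilon)\bigr)$, where $\sigma$ denotes the sigmoid. Hence for every $t \neq \tau$ we have $\varphi_{\varepsilon, \tau}'(t) \to \mathbf{1}_{\{t > \tau\}} = \varphi_0'(t)$ and $\varphi_{\varepsilon, \tau}''(t) \to 0 = \varphi_0''(t)$ as $\varepsilon \downarrow 0$, the second convergence being exponentially fast once $t$ is bounded away from $\tau$. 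The hypothesis on $\tau$ rules out $\tau \in \{\cos(2\pi (i-1)/n)\}_{i=1}^n$, so all inner products $x_i\transpose x_j^{}$ occurring at the regular $n$-gon $X$ are bounded away from $\tau$, and the above convergence holds uniformly on the finite set of relevant values.

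For criticality of $X$ I would invoke the dihedral symmetry of the regular $n$-gon. Because $f$ depends only on inner products and $W = \one\one\transpose$ is uniform, $f$ is invariant both under permutations of the columns of $X$ and under isometries of the sphere applied simultaneously to all columns. The reflection $r$ through the axis containing $x_1$ fixes $x_1$ and permutes the remaining vertices, so $f(rX) = f(X)$. Its differential acts as $-\mathrm{Id}$ on the one-dimensional tangent space $\T_{x_1}\mathbb{S}^1$, forcing $(\grad f(X))_1 = 0$. The cyclic rotation symmetry of the $n$-gon (also a simultaneous isometry of all $n$ sphere factors) then yields $(\grad f(X))_i = 0$ for every $i$. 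This argument is valid for every $\varepsilon > 0$, with no continuity needed.

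For the Hessian, I would apply Lemma~\ref{lem:socpcircles}: the eigenvalues of $\Hess f(X)$ coincide with those of $-L(M^{(\varepsilon)})$, where $M^{(\varepsilon)}_{ij} = h_{\varepsilon, \tau}(x_i\transpose x_j^{})$ and $h(t) = t \varphi'(t) - (1-t^2) \varphi''(t)$. The convergence established in the first paragraph shows $M^{(\varepsilon)} \to M^{(0)}$ entrywise as $\varepsilon \downarrow 0$, where $M^{(0)}_{ij} = x_i\transpose x_j^{} \cdot \mathbf{1}_{\{x_i\transpose x_j^{} > \tau\}}$ is exactly the weight matrix giving the Hessian in the ReLU case of Lemma~\ref{propphiforrelu}. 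By that lemma, $-L(M^{(0)})$ has a single zero eigenvalue (the Goldstone mode associated with the rotation-invariance of $f$, whose eigenvector is $\one_n \in \ker L(M)$ for \emph{every} symmetric $M$) and $n-1$ strictly negative eigenvalues. The mildly technical step, and essentially the only obstacle, is keeping the Goldstone eigenvalue out of the way: one projects onto $\one_n^\perp$, on which $L(M^{(\varepsilon)})$ depends continuously on $\varepsilon$ and the limiting restriction $L(M^{(0)})|_{\one_n^\perp}$ is strictly positive definite; continuity of eigenvalues then yields the same strict sign pattern for $-L(M^{(\varepsilon)})$ on $\one_n^\perp$ for all $\varepsilon \in (0, \varepsilon_{n, \tau}]$ with $\varepsilon_{n, \tau} > 0$ small enough, as claimed.
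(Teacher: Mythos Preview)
Your proposal is correct and follows essentially the same route as the paper: criticality of the regular $n$-gon from symmetry (the paper just says ``by symmetry'' where you spell out the reflection/rotation argument), and negative definiteness of the Hessian via Lemma~\ref{lem:socpcircles} together with the continuity of $\varphi_{\varepsilon,\tau}', \varphi_{\varepsilon,\tau}''$ at $\varepsilon = 0$ for $t \neq \tau$, appealing to Lemma~\ref{propphiforrelu} for the limiting case. Your handling of the trivial zero eigenvalue by restricting to $\one_n^\perp$ is equivalent to the paper's suggestion of fixing one point or passing to the quotient.
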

\begin{proof}[Proof of Lemma~\ref{propphiforeachn}]
  From Lemma~\ref{lem:socpcircles}, one can easily verify by symmetry that the regular $n$-gon $X$ is critical for $f$ for \emph{any} $\varphi$ such that $f$ is differentiable in a neighborhood of $X$. 
  We now study $\Hess f_{\varepsilon, \tau}(X)$, which is well defined (even at $\varepsilon = 0$) since we assume  $\tau \neq \cos(2 \pi \frac{i-1}{n})$ for $i = 1, \ldots, n$.
  Lemma~\ref{propphiforrelu} gives that $\Hess f_{0, \tau}(X)$ is negative\textsuperscript{\ref{note1}} definite.
  Hence, it is enough to show that the curve $\varepsilon \mapsto \Hess f_{\varepsilon, \tau}(X)$ is continuous from the right at $\varepsilon = 0$.
  We can then conclude using the continuity of eigenvalues.
  (More formally, one should remove the trivial zero eigenvalue of the Hessian by fixing one of the points, or passing to the quotient, and then applying this argument.)
  The function $\varphi$ only appears in $\Hess f(X)$ through $\varphi'$ and $\varphi''$ (see Lemma~\ref{lem:gradhess} in Appendix~\ref{app:gradhess}).
  Therefore, it is enough to show that for any fixed $t \neq \tau$, both
  \begin{align*}
    \varepsilon \mapsto \varphi_{\varepsilon, \tau}'(t) = 1 - \big(1 + e^{(t - \tau)/ \varepsilon}\big)^{-1} && \text{and} && \varepsilon \mapsto \varphi_{\varepsilon, \tau}''(t) = \big(2 \varepsilon + 2 \varepsilon \cosh((t - \tau)/ \varepsilon)\big)^{-1}
  \end{align*}
  are continuous from the right at $\varepsilon = 0$.
  This is readily apparent from their expressions.
\end{proof}
Lemma~\ref{propphiforeachn} does not give a $\varphi$ which works for all $n$ as in Theorem~\ref{thm:circles} because, as $n$ grows, it seems that $\varepsilon_{n, \tau}$ must decrease to zero.
However, we can circumvent this issue by the following observation.
Fix $m \geq 5$ and let $n = N m$ be an integer multiple of $m$.
Consider the configuration of points on the circle where $N = n/m$ points are placed at the vertices of a regular $m$-gon---we call this configuration the ``repeated $m$-gon.''
As the $m$-gon is a spurious local maximum for $f$ on $m$ points with $\varphi = \varphi_{\varepsilon_m, \tau}$, it stands to reason that the repeated $m$-gon should be a spurious local maximum on $n$ points with $\varphi = \varphi_{\varepsilon_m, \tau}$ (when $n$ is a multiple of $m$).
This is indeed true.
We give a proof of the following (more general) statement in Appendix~\ref{apprepeatingpoints}.
(Allowing arbitrary $q$, as opposed to just a multiple of $\one$, shall be useful for handling the cases where $n$ is not a multiple of $m$.)
\begin{lemma}\label{proprepeatingpoints}
Assume $\varphi'(1) > 0$.
Let 
$q \in \reals^m$ have positive integer entries and $n = q_1 + \cdots + q_m$.
Given $X = (x_1, \ldots, x_m) \in \reals^{2 \times m}$, define
\begin{align}\label{defofXtildefromX}
  \tilde X = (\underbrace{x_1, \ldots, x_1}_\text{$q_1$ times}, \ldots,\underbrace{ x_m, \ldots, x_m}_\text{$q_m$ times}) \in \reals^{2 \times n}.
\end{align}
If $X$ is critical with negative\textsuperscript{\ref{note1}} definite Hessian for $f_m(X) = \frac{1}{2} \langle q q\transpose , \varphi(X\transpose X) \rangle$, then $\tilde X$ is critical with negative\textsuperscript{\ref{note1}} definite Hessian for $f_n(\tilde X) = \frac{1}{2} \langle \one_n^{} \one_n\transpose , \varphi({\tilde X}\transpose \tilde X) \rangle$.
\end{lemma}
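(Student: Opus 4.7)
My plan is to reformulate everything in terms of an ``expansion matrix'' $E \in \{0,1\}^{n \times m}$ with $E_{ij} = 1$ iff $\tilde x_i = x_j$, so that $\tilde X = X E\transpose$, $E\transpose E = \diag(q)$, $E \one_m = \one_n$, and $\psi(\tilde X\transpose \tilde X) = E\, \psi(X\transpose X)\, E\transpose$ for any entrywise function $\psi$. Write $j(i)$ for the unique $j$ with $E_{ij} = 1$, and set $B_j = \{ i : j(i) = j \}$ (so $|B_j| = q_j$). The critical point condition for $\tilde X$ is immediate: a direct calculation gives $\grad_{\tilde x_i} f_n(\tilde X) = (I - x_{j(i)} x_{j(i)}\transpose) \sum_{j'} q_{j'} \varphi'(x_{j(i)}\transpose x_{j'}) x_{j'} = q_{j(i)}^{-1} \grad_{x_{j(i)}} f_m(X) = 0$. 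For the Hessian, Lemma~\ref{lem:socpcircles} lets me replace $\Hess f_n(\tilde X)$ and $\Hess f_m(X)$ (eigenvalue-wise) by $-L(\tilde M)$ and $-L(M)$, where $M_{jj'} = q_j q_{j'} H_{jj'}$, $\tilde M_{ii'} = H_{j(i),j(i')}$, and $H_{jj'} = h(x_j\transpose x_{j'})$ with $h$ as in~\eqref{hesseqnintheta}. The claim reduces to showing that $L(\tilde M) \succeq 0$ with $\ker L(\tilde M) = \spann(\one_n)$, given the analogous statement for $L(M)$ and $\spann(\one_m)$.

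The main step is the orthogonal decomposition $\reals^n = \col(E) \oplus V_3$, where $V_3 = \{ \delta \in \reals^n : E\transpose \delta = 0 \}$ consists of vectors summing to zero on each block $B_j$. Writing $\tilde\alpha = E\bar\alpha + \delta$ accordingly and expanding $\tilde\alpha\transpose L(\tilde M)\tilde\alpha = \frac{1}{2}\sum_{i,i'} H_{j(i),j(i')}(\tilde\alpha_i - \tilde\alpha_{i'})^2$ block-by-block, I will verify that the cross terms collapse thanks to $\sum_{i \in B_j} \delta_i = 0$ and that the within-block and between-block pieces combine to
\[
    \tilde\alpha\transpose L(\tilde M) \tilde\alpha \;=\; \bar\alpha\transpose L(M) \bar\alpha \;+\; \sum_{j=1}^m (Hq)_j \, \|\delta_{B_j}\|^2.
\]
This is a clean block-diagonalization of $L(\tilde M)$ along $\col(E) \oplus V_3$: the first summand reproduces the $m$-point Laplacian quadratic form and the second is a diagonal form on $V_3$.

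To finish, the first summand is $\geq 0$ with equality iff $\bar\alpha \in \spann(\one_m)$, by the hypothesis on $X$. For the second, I would rewrite
\[
    (Hq)_j \;=\; q_j H_{jj} + \sum_{j' \neq j} q_{j'} H_{jj'} \;=\; q_j \varphi'(1) + L(M)_{jj}/q_j \;\geq\; q_j \varphi'(1) \;>\; 0,
\]
using $L(M) \succeq 0 \Rightarrow L(M)_{jj} \geq 0$, the identity $H_{jj} = h(1) = \varphi'(1)$, and the assumed $\varphi'(1) > 0$. Hence both summands are nonnegative, and are simultaneously zero iff $\bar\alpha \in \spann(\one_m)$ and $\delta = 0$, i.e., $\tilde\alpha \in \spann(E\one_m) = \spann(\one_n)$, exactly what we need. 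I expect the main obstacle to be identifying the right orthogonal decomposition and carefully verifying that the cross terms vanish; after that, positivity of $(Hq)_j$ falls out mechanically from $L(M) \succeq 0$ together with $\varphi'(1) > 0$.
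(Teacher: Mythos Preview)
Your proof is correct and follows essentially the same approach as the paper: both introduce the expansion matrix $E$ and block-diagonalize $L(\tilde M)$ along the orthogonal decomposition $\reals^n = \col(E) \oplus \ker(E\transpose)$, reducing the between-block part to $L(M)$ and the within-block part to diagonal coefficients $(Hq)_j = d_{jj}/q_j$. The paper phrases this via the congruence $\tilde Q^{1/2} L(\tilde M) \tilde Q^{1/2}$ and extracts all eigenvalues explicitly (its Lemma~\ref{proprepeatingpointsrefinement}), whereas you work directly with the quadratic form and prove only positive semidefiniteness with one-dimensional kernel; the underlying decomposition and the positivity argument $(Hq)_j = q_j\varphi'(1) + L(M)_{jj}/q_j > 0$ are the same.
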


For fixed $m \geq 5$, this shows that $f$ has a spurious local maximum with $\varphi = \varphi_{\varepsilon_m, \tau}$ for $n$ points, where $n$ is a multiple of $m$ (and $\tau$ is chosen as in Lemma~\ref{propphiforrelu} with $n$ replaced by $m$).
To handle a number of points $n = N m + r$ that is not a multiple of $m$ (i.e., $r \in [1, m-1]$), first place $N$ points on each vertex of the regular $m$-gon, then distribute the extraneous $r$ points arbitrarily on the $m$ vertices.
This configuration is unlikely to be a critical point; however, it is close to one: if the total number of points is sufficiently large, the few extra points should only cause a minor perturbation from what was a (strict) spurious local maximum.

In Lemma~\ref{lemmaIFT}, this intuition is made rigorous via \emph{the implicit function theorem}.
In the language of Lemma~\ref{proprepeatingpoints}, we let $X$ be the regular $m$-gon, and we take each $q_i$ to be an integer close to $N$; consequently $\frac{1}{N} q$ is close to $\one_m$.
We rescale by $\frac{1}{N}$ so that $q$ itself becomes close $\one_m$ (this scaling does not affect the landscape of $f$).
The proof is in Appendix~\ref{apprepeatingpoints}.
\begin{lemma}\label{lemmaIFT}
Fix $m \geq 5$ odd.
There exists $\delta > 0$ such that if $\varepsilon, \tau$ and $q \in \reals^m$ satisfy
\begin{align*}
  \|q - \one_m\| \leq \delta, &&
  0 \leq \varepsilon \leq \delta &&
  \textrm{ and } &&
  -\delta \leq \tau \leq \delta,
\end{align*}
then $f(X) = \frac{1}{2} \langle q q\transpose , \varphi_{\varepsilon, \tau}(X\transpose X) \rangle$ has a spurious critical point with negative\textsuperscript{\ref{note1}} definite Hessian.
\end{lemma}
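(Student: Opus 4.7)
The plan is to apply the implicit function theorem to the critical-point equation $\grad f(X; q, \varepsilon, \tau) = 0$ at the base point $(X, q, \varepsilon, \tau) = (X^\star, \one_m, 0, 0)$, where $X^\star$ is the regular $m$-gon~\eqref{eq:ngon}. The choice $\tau = 0$ is admissible for Lemma~\ref{propphiforrelu}: we have $0 \in [0, \cos(\tfrac{2\pi}{5}))$, and, since $m \geq 5$ is odd, $\cos(2\pi \tfrac{i-1}{m}) = 0$ would force $m$ to be a multiple of $4$, a contradiction. Lemma~\ref{propphiforrelu} then gives that $X^\star$ is a critical point of $f$ at the base parameters, with Hessian negative definite modulo the one-dimensional null space coming from global rotations.

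Next, one verifies that the gradient map $(X, q, \varepsilon, \tau) \mapsto \grad f(X; q, \varepsilon, \tau)$ is jointly $C^1$ in a neighborhood of the base point, including at the boundary $\varepsilon = 0$. By Lemma~\ref{lem:socpcircles} (and the analogous gradient formula), this map depends on $\varepsilon, \tau$ only through $\varphi'_{\varepsilon, \tau}(x_i\transpose x_j^{})$ and $\varphi''_{\varepsilon, \tau}(x_i\transpose x_j^{})$. A direct calculation with the log-sum-exp expression~\eqref{eq:phi-softmax} shows that for any fixed $t_0 \neq 0$, the functions $(\varepsilon, \tau, t) \mapsto \varphi'_{\varepsilon, \tau}(t)$ and $\varphi''_{\varepsilon, \tau}(t)$ admit a $C^\infty$ extension to $\varepsilon = 0$ on a neighborhood of $(0, 0, t_0)$: every partial in $\varepsilon$ can be written as a polynomial in $1/\varepsilon$ times $e^{-|t-\tau|/\varepsilon}$, and such terms vanish as $\varepsilon \to 0^+$. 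The inner products at $X^\star$ are $\{\cos(2\pi k/m) : k = 0, \ldots, m-1\}$, all nonzero (using $m$ odd), so on a small neighborhood of the base point all of them stay bounded away from $\tau$, and the composite gradient map is $C^1$ there.

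The remainder is routine IFT bookkeeping. Quotient out the global rotation by restricting $X$ to a slice $\Sigma \subset \calM$ transverse to the $\SOtwo$-orbit through $X^\star$ (for instance, impose $\sum_i \theta_i = 0$ in the angle parametrization). On $\Sigma$ the reduced Hessian at the base point is strictly negative definite, hence invertible, and the IFT produces a $C^1$ curve $(q, \varepsilon, \tau) \mapsto X(q, \varepsilon, \tau) \in \Sigma$ of critical points defined on a neighborhood of $(\one_m, 0, 0)$, with $X(\one_m, 0, 0) = X^\star$. Continuity of eigenvalues then shows that $\Hess f$ at $X(q, \varepsilon, \tau)$ remains negative definite modulo rotations on a (possibly smaller) neighborhood, and $X(q, \varepsilon, \tau)$ is spurious because $X^\star$ is bounded away from the synchronized states. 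Taking $\delta$ small enough to fit inside this final neighborhood closes the proof.

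The main obstacle is the $C^1$ regularity across the $\varepsilon = 0$ boundary, which rests on the flatness-at-zero estimates for $\varepsilon \mapsto e^{-c/\varepsilon}$ with $c > 0$; once this smoothness is in place, nothing beyond a standard IFT plus perturbation-of-eigenvalues argument is needed.
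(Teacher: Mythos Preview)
Your proposal is correct and follows essentially the same approach as the paper: apply the implicit function theorem to the gradient map at the regular $m$-gon with base parameters $(\one_m, 0, 0)$, using Lemma~\ref{propphiforrelu} for the base-point criticality and nondegeneracy, quotienting out the rotation, and then invoking continuity of eigenvalues. The only technical difference is how the boundary $\varepsilon = 0$ is handled: the paper reparameterizes $\varepsilon = \alpha^2$ so that the domain becomes an open neighborhood of $\alpha = 0$, whereas you argue directly that the $e^{-c/\varepsilon}$ flatness yields a $C^\infty$ extension across $\varepsilon = 0$; both are valid and equivalent in spirit.
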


We can now combine Lemmas~\ref{propphiforeachn},~\ref{proprepeatingpoints} and~\ref{lemmaIFT} to prove Theorem~\ref{thm:circles}.

\begin{proof}[Proof of Theorem~\ref{thm:circles}]
Fix $m = 5$.
Write $n = N m + r$ with remainder $r \in [0, m-1]$.
Let $q = (1+r/N, 1, 1, \ldots, 1)\transpose \in \reals^m$.
Note that if $n$ is sufficiently large then $q$ is arbitrarily close to $\one_m$.
Invoke Lemma~\ref{lemmaIFT} to select $\delta > 0$.
Set $n_0 \geq 5$ such that $\|q - \one_m\| \leq \delta$ for all $n > n_0$.
If
\begin{align}\label{constraintsonparams}
  n > n_0, && 0 \leq \varepsilon \leq \delta, && \textrm{ and } && -\delta \leq \tau \leq \delta,
\end{align}
then $f(X) = \frac{1}{2} \langle q q\transpose , \varphi_{\varepsilon, \tau}(X\transpose X) \rangle$ has a spurious critical point with negative\textsuperscript{\ref{note1}} definite Hessian: call it  $X_{q, \varepsilon, \tau}$.

Let $\tilde X_{q, \varepsilon, \tau}$ be the configuration of $n$ points where $N q_i$ points are placed at point $i$ of $X_{q, \varepsilon, \tau}$, as in equation~\eqref{defofXtildefromX}.
By Lemma~\ref{proprepeatingpoints} and our choice of $q$, this configuration on $n$ points is a spurious local maximum for $f(X) = \frac{1}{2}\innersmall{\one_n^{} \one_n\transpose}{\varphi_{\varepsilon, \tau}(X\transpose X)}$.
In other words, we have shown that for all $n, \varepsilon, \tau$ satisfying~\eqref{constraintsonparams}, $f$ has a spurious local maximum with $\varphi = \varphi_{\varepsilon, \tau}$ and the graph $W = \one_n^{}\one_n\transpose$.

It remains to choose $\varepsilon$ and $\tau$ to accommodate $m \leq n \leq n_0$.
Select any $\tau \in [0, \min\{\delta, \cos(\frac{2\pi}{5})\})$ such that $\tau \neq \cos(2 \pi \frac{i-1}{n})$ for all integers $n \in [m, n_0]$ and $i \in [1, n]$.
Further define
$$\varepsilon = \min\{\delta, \varepsilon_{m, \tau}, \varepsilon_{m+1, \tau}, \ldots, \varepsilon_{n_0, \tau}\}$$
where $\varepsilon_{n, \tau}$ are provided by Lemma~\ref{propphiforeachn}.
Then, by Lemma~\ref{propphiforeachn}, we conclude that $f$ with $\varphi = \varphi_{\varepsilon, \tau}$ has a spurious local maximum for all $n \geq m$.
\end{proof}

\section{Conclusions and perspectives}\label{conclusions}
We conclude with a list of open questions for nonlinear synchronization on a circle (complete graph $W = \one\one\transpose$ and $\varphi = \varphi_\beta$~\eqref{eq:phibeta} unless otherwise stated).

\begin{itemize}
\item (\textbf{Exponential case}) For which $\beta$ and which graphs do we have synchronization on a circle with $\varphi_\beta$? For complete graphs and $\beta \geq -0.16$, see~\citep{polyanskiy2025synchronizationcircle}.
    
\item (\textbf{Minimal configurations}) Is the regular $n$-gon the minimal configuration of $f$?
\citet{cohn1960chargescircle} shows that this is true for $\varphi(t) = -\log(1-t)$.  
On the other hand, Theorem~\ref{thm:circles} shows that there exist nice $\varphi$ for which the $n$-gon is not minimal.

\item (\textbf{Critical configurations}) We say a configuration consists of \emph{clusters} if there exist $i \neq j$ such that $x_i = x_j$, i.e., at least two points overlap.
The number of clusters is the number of distinct values of the points $x_i$.
Do all critical configurations of $f$, apart from the regular $n$-gon, consist of clusters?
Numerically this appears to be the case.
\citet{cohn1960chargescircle} shows that this is true for $\varphi(t) = -\log(1-t)$.
Note that for $d \geq 3$ there do exist fully non-clustered critical configurations which are not minimal (e.g., an $n$-gon on the equator of a sphere).

\item (\textbf{Signatures of critical configurations})
The signature of a critical configuration is the number of positive eigenvalues of the Hessian at that configuration.
Is the signature of every critical configuration at least the number of clusters minus one?
Numerically, this appears to be true.
\citet{cohn1960chargescircle} shows that for $\varphi(t) = -\log(1-t)$ the signature always \emph{equals} the number of clusters minus one---this is not true in general for $\varphi = \varphi_\beta$ (e.g., for $n=9, \beta=1$, and a critical configuration with four clusters of sizes $1, 2, 2, 4$).

By Lemma~\ref{proprepeatingpointsrefinement} (which is used to prove Lemma~\ref{proprepeatingpoints} above), to show that the signature is at least the number of clusters minus one, it is enough to show the following: if $q$ has positive entries and $X$ is a critical configuration of $f(X) = \frac{1}{2} \langle q q\transpose, \varphi_\beta(X\transpose X)\rangle$ with distinct points, then all eigenvalues of $\Hess f(X)$ are positive (except for the single zero eigenvalue).

\item (\textbf{Dynamics of gradient flow})
\citet{geshkovski2025mathtransformers} are not only interested in the asymptotic behavior of gradient flow on $f$ but also its dynamics.
Numerically, gradient flow on $f$ often slows down near saddles consisting of only a few clusters (also called meta-stable states \citep{cohn1960chargescircle,erber1991equilibrium}).
Also, gradient flow seems to jump between such saddles (akin to saddle-to-saddle dynamics, see~\citep{jacot2022saddletosaddle,berthier2023saddletosaddle,pesme2023saddletosaddle} and references therein).
Can one characterize these dynamics?
A first step would be to compute the maximal eigenvalue of the Hessian at clustered critical configurations, as this eigenvalue controls the escape time of gradient flow.
How does that eigenvalue depend on the number of clusters?

Recently, progress has been made toward understanding the dynamics of gradient flow on $f$. 
When the number of particles is large, \citet{brunoagazzi1} provide a mathematical justification for the emergence and persistence of clustered configurations. Similarly, under some assumptions, \citet{geshkovski2024dynamicmetastabilityselfattentionmodel} show that the particles remain confined near clustered states for long time scales.
\end{itemize}

\newcommand{\acktext}{%
We thank Pedro Abdalla, Afonso Bandeira and Cyril Letrouit for insightful discussions.

\vspace{1mm}
\noindent
This work was supported by the Swiss State Secretariat for Education, Research and Innovation (SERI) under contract number MB22.00027.%
}
\section*{Acknowledgments}
\acktext{}

\bibliographystyle{abbrvnat}
\bibliography{references}

@inproceedings{brunoagazzi1,
 author = {Bruno, Giuseppe and Pasqualotto, Federico and Agazzi, Andrea},
 booktitle = {International Conference on Representation Learning ({ICLR})},
 editor = {Y. Yue and A. Garg and N. Peng and F. Sha and R. Yu},
 pages = {7496--7526},
 title = {Emergence of meta-stable clustering in mean-field transformer models},
 url = {https://proceedings.iclr.cc/paper_files/paper/2025/file/164687cb815daae754d33364716e65e6-Paper-Conference.pdf},
 volume = {2025},
 year = {2025}
}

@article{geshkovski2024dynamicmetastabilityselfattentionmodel,
      title={Dynamic metastability in the self-attention model},
      author={Geshkovski,  B. and Koubbi,  H. and Polyanskiy,  Y. and Rigollet,  P.},
      year={2024},
      journal={arXiv preprint 2410.06833}
}

@article{polyanskiy2025synchronizationcircle,
  title         = {Synchronization of mean-field models on the circle},
  author        = {Polyanskiy,  Y. and Rigollet,  P. and Yao,  A.},
  journal       = {arXiv preprint arXiv:2507.22857},
  year          = {2025}
}

@Article{LutransformerneuralODE2020,
    author  = {Yiping Lu and Zhuohan Li and Di He and Zhiqing Sun and Bin Dong and Tao Qin and Liwei Wang and Tie-Yan Liu},
    journal = {International Conference on Learning Representations 2020 Workshop on Integration of Deep Neural Models and Differential Equations, arXiv 1906.02762},
    title   = {Understanding and Improving Transformer from a Multi-Particle Dynamic System Point of View},
    year    = {2020}
}

@inproceedings{duttatransformerneuralODE2021,
 author = {Dutta, Subhabrata and Gautam, Tanya and Chakrabarti, Soumen and Chakraborty, Tanmoy},
 booktitle = {Advances in Neural Information Processing Systems},
 editor = {M. Ranzato and A. Beygelzimer and Y. Dauphin and P.S. Liang and J. Wortman Vaughan},
 pages = {5531--5544},
 publisher = {Curran Associates, Inc.},
 title = {Redesigning the Transformer Architecture with Insights from Multi-particle Dynamical Systems},
 url = {https://proceedings.neurips.cc/paper_files/paper/2021/file/2bd388f731f26312bfc0fe30da009595-Paper.pdf},
 volume = {34},
 year = {2021}
}

@InProceedings{sandersinkformers2022,
  author    = {Sander, Michael E. and Ablin, Pierre and Blondel, Mathieu and Peyr\'e, Gabriel},
  booktitle = {Proceedings of The 25th International Conference on Artificial Intelligence and Statistics},
  title     = {Sinkformers: Transformers with Doubly Stochastic Attention},
  year      = {2022},
  editor    = {Camps-Valls, Gustau and Ruiz, Francisco J. R. and Valera, Isabel},
  month     = {28--30 Mar},
  pages     = {3515--3530},
  publisher = {PMLR},
  series    = {Proceedings of Machine Learning Research},
  volume    = {151},
  pdf       = {https://proceedings.mlr.press/v151/sander22a/sander22a.pdf},
  url       = {https://proceedings.mlr.press/v151/sander22a.html},
}

@article{berthier2023saddletosaddle,
  author  = {Berthier, R.},
  title   = {Incremental Learning in Diagonal Linear Networks},
  journal = {Journal of Machine Learning Research},
  year    = {2023},
  volume  = {24},
  number  = {171},
  pages   = {1--26},
  url     = {http://jmlr.org/papers/v24/22-1395.html}
}

@inproceedings{pesme2023saddletosaddle,
 author = {Pesme, S. and Flammarion, N.},
 booktitle = {Advances in Neural Information Processing Systems},
 editor = {A. Oh and T. Naumann and A. Globerson and K. Saenko and M. Hardt and S. Levine},
 pages = {7475--7505},
 publisher = {Curran Associates, Inc.},
 title = {Saddle-to-Saddle Dynamics in Diagonal Linear Networks},
 url = {https://proceedings.neurips.cc/paper_files/paper/2023/file/17a9ab4190289f0e1504bbb98d1d111a-Paper-Conference.pdf},
 volume = {36},
 year = {2023}
}

@Article{jacot2022saddletosaddle,
  author  = {Jacot, A. and Ged, F. and {\c{S}}im{\c{s}}ek, B. and Hongler, C. and Gabriel, F.},
  journal = {arXiv preprint arXiv:2106.15933},
  title   = {Saddle-to-Saddle Dynamics in Deep Linear Networks: Small Initialization Training, Symmetry, and Sparsity},
  year    = {2022},
  url     = {https://arxiv.org/abs/2106.15933}
}

@inproceedings{neuralODEsChen,
 author = {Chen, Ricky T. Q. and Rubanova, Yulia and Bettencourt, Jesse and Duvenaud, David K},
 booktitle = {Advances in Neural Information Processing Systems},
 editor = {S. Bengio and H. Wallach and H. Larochelle and K. Grauman and N. Cesa-Bianchi and R. Garnett},
 pages = {},
 publisher = {Curran Associates, Inc.},
 title = {Neural Ordinary Differential Equations},
 url = {https://proceedings.neurips.cc/paper_files/paper/2018/file/69386f6bb1dfed68692a24c8686939b9-Paper.pdf},
 volume = {31},
 year = {2018}
}

@article{E2017,
  author    = {Weinan E},
  title     = {A Proposal on Machine Learning via Dynamical Systems},
  journal   = {Communications in Mathematics and Statistics},
  volume    = {5},
  number    = {1},
  pages     = {1--11},
  year      = {2017},
  month     = {March},
  doi       = {10.1007/s40304-017-0103-z}
}

@article{Haber2018,
doi = {10.1088/1361-6420/aa9a90},
year = {2017},
month = {dec},
publisher = {IOP Publishing},
volume = {34},
number = {1},
pages = {014004},
author = {Eldad Haber and Lars Ruthotto},
title = {Stable architectures for deep neural networks},
journal = {Inverse Problems}
}

@article{absil2006stableequilibrium,
	author = {Absil, P.-A. and Kurdyka, K.},
	title = {On the stable equilibrium points of gradient systems},
	year = {2006},
	publisher = {Elsevier {BV}},
	volume = {55},
	number = {7},
	pages = {573--577},
	journal = {Systems {\&} Control Letters},
	doi = {10.1016/j.sysconle.2006.01.002}
}

@Article{mcrae2023benignlowdim,
  author  = {McRae, A.D. and Boumal, N.},
  journal = {SIAM Journal on Optimization},
  title   = {Benign landscapes of low-dimensional relaxations for orthogonal synchronization on general graphs},
  year     = {2024},
  number   = {2},
  pages    = {1427--1454},
  volume   = {34},
  doi      = {10.1137/23M1584642}
}

@book{krantz2013implicitfunctionthm,
    doi = {10.1007/978-1-4614-5981-1},
    year = {2013},
    publisher = {Springer New York},
    author = {Krantz, S.G. and Parks, H.R.},
    title = {The Implicit Function Theorem}
}

@Book{AMS08,
  Title                    = {Optimization Algorithms on Matrix Manifolds},
  Author                   = {Absil, P.-A. and Mahony, R. and Sepulchre, R.},
  Publisher                = {Princeton University Press},
  Year                     = {2008},

  Address                  = {Princeton, NJ},

  ISBN                     = {978-0-691-13298-3},
  Pages                    = {xvi+224}
}

@Inbook{mixon2023benedettofickus,
    author = {Mixon, D.G. and Needham, T. and Shonkwiler, C. and Villar, S.},
    title = {Three Proofs of the {Benedetto--Fickus} Theorem},
    bookTitle = {Sampling, Approximation, and Signal Analysis: Harmonic Analysis in the Spirit of {J. Rowland Higgins}},
    year = {2023},
    publisher = {Springer International Publishing},
    address = {Cham},
    pages = {371--391},
    doi = {10.1007/978-3-031-41130-4_14}
}

@article{benedetto2003finite,
    title={Finite normalized tight frames},
    author={Benedetto, J.J. and Fickus, M.},
    journal={Advances in Computational Mathematics},
    volume={18},
    pages={357--385},
    year={2003},
    publisher={Springer}
}

@article{taylor2012kuramoto,
    doi = {10.1088/1751-8113/45/5/055102},
    year = {2012},
    publisher = {{IOP} Publishing},
    volume = {45},
    number = {5},
    pages = {055102},
    author = {Taylor, R.},
    title = {There is no non-zero stable fixed point for dense networks in the homogeneous {Kuramoto} model},
    journal = {Journal of Physics {A}: Mathematical and Theoretical}
}

@Article{manopt,
  author  = {Boumal, N. and Mishra, B. and Absil, P.-A. and Sepulchre, R.},
  journal = {Journal of Machine Learning Research},
  title   = {{M}anopt, a {M}atlab Toolbox for Optimization on Manifolds},
  year    = {2014},
  number  = {42},
  pages   = {1455--1459},
  volume  = {15},
  url     = {https://www.manopt.org},
}

@Article{cohn2007universally,
  Title                    = {Universally optimal distribution of points on spheres},
  Author                   = {Cohn, H. and Kumar, A.},
  Journal                  = {Journal of the American Mathematical Society},
  Year                     = {2007},
  Number                   = {1},
  Pages                    = {99--148},
  Volume                   = {20}
}

@Book{Horn1991,
    author    = {Horn, R.A. and Johnson, C.R.},
    title     = {Topics in Matrix Analysis},
    publisher = {Cambridge University Press},
    year      = {1991}
}

@article{erber1991equilibrium,
    title = {Equilibrium configurations of {N} equal charges on a sphere},
    volume = {24},
    DOI = {10.1088/0305-4470/24/23/008},
    number = {23},
    journal = {Journal of Physics {A}: Mathematical and General},
    publisher = {IOP Publishing},
    author = {Erber, T. and Hockney, G.M.},
    year = {1991},
    pages = {L1369--L1377}
}

@article{cohn1960chargescircle,
    title = {Global Equilibrium Theory of Charges on a Circle},
    volume = {67},
    DOI = {10.1080/00029890.1960.11989502},
    number = {4},
    journal = {The American Mathematical Monthly},
    publisher = {Informa UK Limited},
    author = {Cohn, Harvey},
    year = {1960},
    pages = {338--343}
}

@Article{journee2010low,
  Title                    = {Low-rank optimization on the cone of positive semidefinite matrices},
  Author                   = {Journ{\'e}e, M. and Bach, F. and Absil, P.-A. and Sepulchre, R.},
  Journal                  = {SIAM Journal on Optimization},
  Year                     = {2010},
  Number                   = {5},
  Pages                    = {2327--2351},
  Volume                   = {20},

  Doi                      = {10.1137/080731359},
  Publisher                = {SIAM}
}

@Article{sarlette2009consensus,
  Title                    = {Consensus optimization on manifolds},
  Author                   = {Sarlette, A. and Sepulchre, R.},
  Journal                  = {SIAM Journal on Control and Optimization},
  Year                     = {2009},
  Number                   = {1},
  Pages                    = {56--76},
  Volume                   = {48}
}

@incollection{sarlette2009synchcircle,
    author    = {A. Sarlette and R. Sepulchre},
    title     = {Synchronization on the Circle},
    booktitle = {The Complexity of Dynamical Systems: A Multi-Disciplinary Perspective},
    editor    = {J. Dubbeldam and K. Green and D. Lenstra},
    publisher = {Wiley},
    year      = {2011},
    url       = {https://arxiv.org/abs/0901.2408}
}

@article{lojasiewicz1965ensembles,
	title={Ensembles semi-analytiques},
	author={{\L}ojasiewicz, S.},
	journal={Lecture Notes IHES (Bures-sur-Yvette)},
	year={1965}
}

@book{shub1987book,
    title = {Global Stability of Dynamical Systems},
    DOI = {10.1007/978-1-4757-1947-5},
    publisher = {Springer New York},
    author = {Shub, M.},
    year = {1987}
}

@Book{boumal2020intromanifolds,
  author    = {Boumal, N.},
  publisher = {Cambridge University Press},
  title     = {An introduction to optimization on smooth manifolds},
  year      = {2023},
  doi       = {10.1017/9781009166164},
  url       = {https://www.nicolasboumal.net/book},
}

@article{geshkovski2025mathtransformers,
  title = {A mathematical perspective on transformers},
  volume = {62},
  DOI = {10.1090/bull/1863},
  number = {3},
  journal = {Bulletin of the American Mathematical Society},
  publisher = {American Mathematical Society ({AMS})},
  author = {Geshkovski,  Borjan and Letrouit,  Cyril and Polyanskiy,  Yury and Rigollet,  Philippe},
  year = {2025},
  month = apr,
  pages = {427--479}
}

@inproceedings{loshchilov2024nGPT,
	title = {{nGPT}: Normalized Transformer with Representation Learning on the Hypersphere},
	author={Loshchilov,  I. and Hsieh,  C.-P. and Sun,  S. and Ginsburg,  B.},
	booktitle={The Thirteenth International Conference on Learning Representations ({ICLR})},
	year={2025},
	url={https://openreview.net/forum?id=se4vjm7h4E}
}

@inproceedings{karagodin2024clustering,
  series = {{NeurIPS} 2024},
  title = {Clustering in Causal Attention Masking},
  DOI = {10.52202/079017-3673},
  booktitle = {Advances in Neural Information Processing Systems 37},
  publisher = {Neural Information Processing Systems Foundation,  Inc. ({NeurIPS})},
  author = {Karagodin,  N. and Polyanskiy,  Y. and Rigollet,  P.},
  year = {2024},
  pages = {115652--115681},
}

@article{parastesh2025synchrosimplicial,
  title = {Synchronization stability in simplicial complexes of near-identical systems},
  volume = {7},
  DOI = {10.1103/ml7b-r35h},
  number = {3},
  journal = {Physical Review Research},
  publisher = {American Physical Society ({APS})},
  author = {Parastesh,  F. and Mehrabbeik,  M. and Rajagopal,  K. and Jafari,  S. and Perc,  M. and del Genio,  C.I. and Boccaletti,  S.},
  year = {2025}
}

@article{kovalenko2021contrarianssync,
  title = {Contrarians Synchronize beyond the Limit of Pairwise Interactions},
  volume = {127},
  DOI = {10.1103/physrevlett.127.258301},
  number = {25},
  journal = {Physical Review Letters},
  publisher = {American Physical Society ({APS})},
  author = {Kovalenko,  K. and Dai,  X. and Alfaro-Bittner,  K. and Raigorodskii,  A.M. and Perc,  M. and Boccaletti,  S.},
  year = {2021}
}

@article{rodriguezabella2024asymptoticattention,
  author        = {Rodr\'iguez Abella,  \'{A}. and Silvestre,  J. P. and Tabuada,  P.},
  title         = {The Asymptotic Behavior of Attention in Transformers},
  journal       = {arXiv preprint 2412.02682},
  year          = {2024},
  doi           = {10.48550/arXiv.2412.02682}
}

@Article{wiley2006syncbasin,
  author  = {Wiley, D. A. and Strogatz, S. H. and Girvan, M.},
  journal = {Chaos},
  title   = {The size of the sync basin},
  year    = {2006},
  volume  = {16},
  doi     = {10.1063/1.2165594},
}

@Article{townsend2020densenetworks,
  author  = {A. Townsend and M. Stillman and S. H. Strogatz},
  journal = {Chaos},
  title   = {Dense networks that do not synchronize and sparse ones that do},
  year    = {2020},
  number  = {8},
  volume  = {30},
  doi     = {10.1063/5.0018322},
}

@Article{markdahl2018nsphere,
  author  = {J. Markdahl and J. Thunberg and J. Goncalves},
  journal = {IEEE Transactions on Automatic Control},
  title   = {Almost Global Consensus on the {$n$}-Sphere},
  year    = {2018},
  number  = {6},
  pages   = {1664--1675},
  volume  = {63},
  doi     = {10.1109/tac.2017.2752799},
}

@Article{markdahl2021multiplyconnected,
  author  = {J. Markdahl},
  journal = {IEEE Transactions on Automatic Control},
  title   = {Synchronization on {R}iemannian Manifolds: Multiply Connected Implies Multistable},
  year    = {2021},
  number  = {9},
  pages   = {4311--4318},
  volume  = {66},
  doi     = {10.1109/tac.2020.3030849},
}

@Article{ling2019synchronization,
  author  = {S. Ling and R. Xu and A. S. Bandeira},
  journal = {SIAM Journal on Optimization},
  title   = {On the Landscape of Synchronization Networks: A Perspective from Nonconvex Optimization},
  year    = {2019},
  number  = {3},
  pages   = {1879--1907},
  volume  = {29},
  doi     = {10.1137/18m1217644},
}

@Article{doerfler2014synchronization,
  author  = {D\"{o}rfler, F. and Bullo, F.},
  journal = {Automatica},
  title   = {Synchronization in complex networks of phase oscillators: A survey},
  year    = {2014},
  issn    = {0005-1098},
  number  = {6},
  pages   = {1539--1564},
  volume  = {50},
  doi     = {10.1016/j.automatica.2014.04.012},
}

@Article{acebron2005kuramoto,
  author  = {Acebr\'{o}n, J. A. and Bonilla, L. L. and P\'{e}rez Vicente, C. J. and Ritort, F. and Spigler, R.},
  journal = {Reviews of Modern Physics},
  title   = {The {K}uramoto model: A simple paradigm for synchronization phenomena},
  year    = {2005},
  number  = {137},
  pages   = {137--185},
  volume  = {77},
  doi     = {10.1103/revmodphys.77.137},
}

@InProceedings{kuramoto1975oscillators,
  author    = {Kuramoto, Y.},
  booktitle = {Proceedings of the International Symposium on Mathematical Problems in Theoretical Physics},
  title     = {Self-entrainment of a population of coupled non-linear oscillators},
  year      = {1975},
  address   = {Kyoto, Japan},
  month     = jan,
  pages     = {420--422},
  doi       = {10.1007/bfb0013365},
}

@Article{kassabov2021densekuramoto,
  author  = {M. Kassabov and S. H. Strogatz and A. Townsend},
  journal = {Chaos},
  title   = {Sufficiently dense {K}uramoto networks are globally synchronizing},
  year    = {2021},
  number  = {7},
  volume  = {31},
  doi     = {10.1063/5.0057659},
}

@Article{abdalla2022expandersync,
  title = {Expander graphs are globally synchronizing},
  volume = {488},
  DOI = {10.1016/j.aim.2025.110773},
  journal = {Advances in Mathematics},
  publisher = {Elsevier BV},
  author = {Abdalla,  P. and Bandeira,  A.S. and Kassabov,  M. and Souza,  V. and Strogatz,  S.H. and Townsend,  A.},
  year = {2026},
  month = mar,
  pages = {110773}
}

@Article{yoneda2021synclowerbound,
  author  = {R. Yoneda and T. Tatsukawa and J. Teramae},
  journal = {Chaos},
  title   = {The lower bound of the network connectivity guaranteeing in-phase synchronization},
  year    = {2021},
  number  = {6},
  volume  = {31},
  doi     = {10.1063/5.0054271},
}

@Article{sclosa2023kuramoto,
  author  = {Sclosa, D.},
  journal = {SIAM Journal on Applied Dynamical Systems},
  title   = {{K}uramoto Networks with Infinitely Many Stable Equilibria},
  year    = {2023},
  number  = {4},
  pages   = {3267--3283},
  volume  = {22},
  doi     = {10.1137/23m155400x},
}

@Article{canale2015equilibria,
  author  = {Canale, E. A. and Monz\'{o}n, P.},
  journal = {Chaos},
  title   = {Exotic equilibria of {H}arary graphs and a new minimum degree lower bound for synchronization},
  year    = {2015},
  number  = {2},
  volume  = {25},
  doi     = {10.1063/1.4907952},
}

@Article{sepulchre2007stabilization,
  author  = {Sepulchre, R. and Paley, D. A. and Leonard, N. E.},
  journal = {IEEE Transactions on Automatic Control},
  title   = {Stabilization of Planar Collective Motion: All-to-All Communication},
  year    = {2007},
  number  = {5},
  pages   = {811--824},
  volume  = {52},
  doi     = {10.1109/tac.2007.898077},
}

@Article{caponigro2015sphereconsensus,
  author  = {Caponigro, M. and Chiara Lai, A. and Piccoli, B.},
  journal = {Discrete {\&} Continuous Dynamical Systems},
  title   = {A nonlinear model of opinion formation on the sphere},
  year    = {2015},
  number  = {9},
  pages   = {4241--4268},
  volume  = {35},
  doi     = {10.3934/dcds.2015.35.4241},
}

@InProceedings{lageman2016sphereconsensus,
  author    = {Lageman, C. and Sun, Z.},
  booktitle = {Proceedings of the IEEE 55th Conference on Decision and Control (CDC)},
  title     = {Consensus on spheres: Convergence analysis and perturbation theory},
  year      = {2016},
  address   = {Las Vegas, NV, USA},
  month     = dec,
  doi       = {10.1109/cdc.2016.7798240},
}

@InProceedings{olfatisaber2006sphereswarms,
  author    = {Olfati-Saber, R.},
  booktitle = {Proceedings of the 45th IEEE Conference on Decision and Control},
  title     = {Swarms on Sphere: A Programmable Swarm with Synchronous Behaviors like Oscillator Networks},
  year      = {2006},
  address   = {San Diego, CA, USA},
  month     = dec,
  doi       = {10.1109/cdc.2006.376811},
}

@Article{li2014controlspheres,
  author  = {Li, W. and Spong, M. W.},
  journal = {IEEE Transactions on Automatic Control},
  title   = {Unified Cooperative Control of Multiple Agents on a Sphere for Different Spherical Patterns},
  year    = {2014},
  number  = {5},
  pages   = {1283--1289},
  volume  = {59},
  doi     = {10.1109/tac.2013.2286897},
}

@InProceedings{tron2012so3consensus,
  author    = {Roberto Tron and Bijan Afsari and Rene Vidal},
  booktitle = {Proceedings of the IEEE 51st Conference on Decision and Control (CDC)},
  title     = {Intrinsic consensus on {SO}(3) with almost-global convergence},
  year      = {2012},
  address   = {Maui, Hawaii, USA},
  month     = dec,
  doi       = {10.1109/cdc.2012.6426677},
}

\appendix

\section{Riemannian Gradient and Hessian for general $d$} \label{app:gradhess}
In this section, we give additional Riemannian geometry background and derive the gradient and Hessian of problem \eqref{eq:f} for general $d \geq 2$.
From this, we can derive the simplified criticality conditions for $d = 2$ as they appear in Lemma~\ref{lem:socpcircles} in Section~\ref{sec:riemann_basics}.
See, for example, \citep{AMS08,boumal2020intromanifolds} for details of such computations in the context of Riemannian optimization.

For convenience, we repeat some definitions from Section~\ref{sec:riemann_basics}.
We endow $\Rd$ with the inner product $\inner{u}{v} = u\transpose v$.
The unit sphere $\Sd = \{ x \in \Rd : \|x\| = 1 \}$ is a Riemannian submanifold of $\Rd$: the tangent space $\T_x\Sd = \{ \dot x \in \Rd : x\transpose \dot x = 0 \}$ inherits the Euclidean inner product as a subspace of $\Rd$.
Formally, $f$ is defined on the product manifold
\begin{align*}
	\calM = (\Sd)^n = \{ X \in \Rdn : \diag(X\transpose X) = \one \}
\end{align*}
with the product Riemannian structure.
In this matrix notation, the points $x_1, \ldots, x_n$ are arranged as the columns of $X$, $\diag \colon \Rnn \to \Rn$ extracts the diagonal of a matrix, $\one \in \Rn$ is the all-ones vector, and
\begin{align*}
	f(X) = \frac{1}{2} \innersmall{W}{\varphi(X\transpose X)},
\end{align*}
where $\inner{A}{B} = \trace(A\transpose B)$ is the Frobenius inner product and $\varphi$ applies entrywise: $\varphi(A)_{ij} = \varphi(a_{ij})$.
The symmetric matrix $W \in \Rnn$ holds the graph weights $w_{ij} \geq 0$.

An important object for studying $\calM$ is the orthogonal projection from $\Rdn$ to the tangent space
\begin{align}
	\T_X\calM & = \{ \dot X \in \Rdn : \diag(X\transpose \dot X) = 0 \},
	\label{eq:TXM}
\end{align}
which has the following formula:
\begin{align}
	\Proj_X(Z) = Z - X\ddiag(X\transpose Z),
	\label{eq:Proj}
\end{align}
where $\ddiag \colon \Rnn \to \Rnn$ sets all off-diagonal entries of a matrix to zero.
Indeed, the $i$th column of $\Proj_X(Z)$ is $z_i - (x_i\transpose z_i^{})x_i$, which is the projection of $z_i$ to the tangent space $\T_{x_i}\Sd$.

With this, we can give the general formulas for the gradient and Hessian of $f$ on $\calM$:
\begin{lemma} \label{lem:gradhess}
	Assume $\varphi$ is twice continuously differentiable.
	Given $X \in \calM$, let
	\begin{align}
		S & = \ddiag(X\transpose X A) - A && \textrm{ with } && A = W \odot \varphi'(X\transpose X),
		\label{eq:SA}
	\end{align}
	where $\odot$ is the entrywise (Hadamard) product and $\varphi'$ applies entrywise.
	The Riemannian gradient of $f \colon \calM \to \reals$ at $X \in \calM$ is given by
	\begin{align}
		\grad f(X) & = -XS.
		\label{eq:gradf}
	\end{align}
	The Riemannian Hessian of $f$ at $X$ is a self-adjoint linear map defined by the quadratic form
	\begin{align}
		\innersmall{\dot X}{\Hess f(X)[\dot X]} & = - \innersmall{\dot X\transpose \dot X}{S} + \frac{1}{2} \inner{(X\transpose \dot X + \dot X\transpose X)^{\odot 2}}{W \odot \varphi''(X\transpose X)} 
		\label{eq:Hessfquadraticform}
	\end{align}
	for all $\dot X$ in the tangent space $\T_X\calM$~\eqref{eq:TXM}, where $M \mapsto M^{\odot 2}$ denotes entrywise squaring.
\end{lemma}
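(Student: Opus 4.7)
The plan is to compute the Euclidean gradient and Hessian of $f$ viewed as a smooth function on $\Rdn$, then convert to Riemannian objects on $\calM$ via the projector $\Proj_X$ from~\eqref{eq:Proj} together with the standard correction involving the normal acceleration of geodesics on the product of spheres.

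First, I would differentiate $f(X)=\tfrac12\langle W,\varphi(X\transpose X)\rangle$ in the ambient space. Using symmetry of $W\odot\varphi'(X\transpose X)$, the chain rule yields $\D f(X)[\dot X]=\langle A,X\transpose\dot X\rangle=\langle XA,\dot X\rangle$, so the Euclidean gradient is $XA$ with $A$ as in~\eqref{eq:SA}. Plugging into $\grad f(X)=\Proj_X(XA)$ and using $\Proj_X(Z)=Z-X\ddiag(X\transpose Z)$ gives $\grad f(X)=XA-X\ddiag(X\transpose XA)=-XS$, which proves~\eqref{eq:gradf}.

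For the Hessian I would use the identity $\langle\dot X,\Hess f(X)[\dot X]\rangle=\D^2 f(X)[\dot X,\dot X]-\langle\nabla f(X),\mathrm{N}_X(\dot X)\rangle$, where $\nabla f(X)=XA$ is the ambient gradient and $\mathrm{N}_X(\dot X)$ is the normal component of the acceleration of the geodesic of $\calM$ starting at $X$ with velocity $\dot X$. Since $\calM$ is a product of unit spheres, each column satisfies $\ddot x_i\perp\T_{x_i}\Sd$ with normal part $-\|\dot x_i\|^2 x_i$; in matrix form, $\mathrm{N}_X(\dot X)=-X\ddiag(\dot X\transpose\dot X)$. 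Next, differentiating $\D f(X)[\dot X]=\langle XA,\dot X\rangle$ once more produces two terms: a term $\langle \dot X A,\dot X\rangle=\langle A,\dot X\transpose\dot X\rangle$ coming from differentiating $X$, and a term from differentiating $A=W\odot\varphi'(X\transpose X)$, which by the entrywise chain rule gives $W\odot\varphi''(X\transpose X)\odot(X\transpose\dot X+\dot X\transpose X)$; combining and using symmetry, this second contribution rearranges into $\tfrac12\langle(X\transpose\dot X+\dot X\transpose X)^{\odot 2},W\odot\varphi''(X\transpose X)\rangle$.

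Finally, the correction term is $-\langle XA,-X\ddiag(\dot X\transpose\dot X)\rangle=\langle X\transpose XA,\ddiag(\dot X\transpose\dot X)\rangle=\langle\ddiag(X\transpose XA),\dot X\transpose\dot X\rangle$ (moving $\ddiag$ across an inner product with a symmetric factor). Adding this to $\D^2 f(X)[\dot X,\dot X]$ combines $A-\ddiag(X\transpose XA)=-S$ inside the first bracket, producing exactly~\eqref{eq:Hessfquadraticform}. The main bookkeeping obstacle is the chain rule on the entrywise-applied $\varphi'$ together with careful use of the identities $\langle M,\ddiag(N)\rangle=\langle\ddiag(M),N\rangle$ and the symmetry of $X\transpose X$, $W$ and $\varphi'(X\transpose X)$ to obtain the clean symmetrized form $(X\transpose\dot X+\dot X\transpose X)^{\odot 2}$; once those are handled, everything else is routine Riemannian submanifold calculus.
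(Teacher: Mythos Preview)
Your gradient computation matches the paper's. For the Hessian, your approach via the Euclidean Hessian plus a second-fundamental-form correction is valid and standard, but the sign in your stated identity is wrong: the correct formula is
\[
\innersmall{\dot X}{\Hess f(X)[\dot X]}=\D^2 f(X)[\dot X,\dot X]+\innersmall{XA}{\mathrm{N}_X(\dot X)},
\]
with a \emph{plus} sign (verify on a single sphere: $\innersmall{\dot x}{\Hess f(x)[\dot x]}=\dot x\transpose\nabla^2\bar f(x)\dot x-(x\transpose\nabla\bar f(x))\|\dot x\|^2$, while $\mathrm{N}_x(\dot x)=-\|\dot x\|^2 x$). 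Following your minus sign, the correction term is $+\innersmall{\ddiag(X\transpose XA)}{\dot X\transpose\dot X}$, which added to $\innersmall{A}{\dot X\transpose\dot X}$ gives $\innersmall{A+\ddiag(X\transpose XA)}{\dot X\transpose\dot X}$, not $-\innersmall{S}{\dot X\transpose\dot X}$; you silently flip a sign when asserting the final combination. With the corrected plus sign the correction becomes $-\innersmall{\ddiag(X\transpose XA)}{\dot X\transpose\dot X}$ and your argument goes through exactly as written.

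The paper sidesteps this bookkeeping by using the equivalent identity $\innersmall{\dot X}{\Hess f(X)[\dot X]}=\innersmall{\dot X}{\D\grad f(X)[\dot X]}$ (valid because $\dot X$ is tangent and $\Hess f(X)[\dot X]=\Proj_X(\D\grad f(X)[\dot X])$), differentiating the already-projected gradient $-XS$ directly. This avoids computing $\mathrm{N}_X(\dot X)$ altogether; otherwise the two routes are the same computation in a different order.
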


\begin{proof}
	Given $\dot X \in \T_X \calM$, one calculates that the directional derivative of $f$ at $X$ along $\dot X$ is
	\begin{align*}
		\D f(X)[\dot X] & = \frac{1}{2} \inner{W}{\varphi'(X\transpose X) \odot (\dot X\transpose X + X\transpose \dot X)} \\
		&= \inner{X\left(W \odot \varphi'(X\transpose X)\right)}{\dot X} = \innersmall{XA}{\dot X}.
	\end{align*}
	Thus, the Riemannian gradient is the orthogonal projection~\eqref{eq:Proj} of $XA$ to $\T_X \calM$~\citep[Prop.~3.61]{boumal2020intromanifolds}, that is,
	\begin{align*}
		\grad f(X) & = \Proj_X(XA) = XA - X\ddiag(X\transpose X A) = -XS.
	\end{align*}
	
	The directional derivative of $\grad f(X)$ along $\dot X \in \T_X\calM$ is given by
	\begin{align*}
		\D\grad f(X)[\dot X] & = -\dot X S - X\dot S,
	\end{align*}
	where $\dot S$ is the derivative of $S$~\eqref{eq:SA} at $X$ along $\dot X$.
	More explicitly, $\dot S = \dot D - \dot A$, where $\dot D$ is diagonal and $\dot A$ is the derivative of $A$~\eqref{eq:SA} at $X$ along $\dot X$.
	The Riemannian Hessian at $X$ as a quadratic form is given by~\citep[Cor.~5.16]{boumal2020intromanifolds}
	\begin{align*}
		\innersmall{\dot X}{\Hess f(X)[\dot X]} & = \innersmall{\dot X}{\D\grad f(X)[\dot X]} \nonumber\\ & = -\innersmall{\dot X\transpose \dot X}{S} - \innersmall{X\transpose \dot X}{\dot S} \\
		&= -\innersmall{\dot X\transpose \dot X}{S} + \innersmall{X\transpose \dot X}{\dot A},
	\end{align*}
	where the last equality uses $\innersmall{X\transpose \dot X}{\dot D} = 0$, which follows from the fact that $\diag(X\transpose \dot X) = 0$ for any tangent vector $\dot X$.
	
	Finally, note that $\dot A = W \odot \varphi''(X\transpose X) \odot (X\transpose \dot X + \dot X\transpose X)$.
	This is symmetric, so $\innersmall{X\transpose \dot X}{\dot A} = \frac{1}{2} \innersmall{X\transpose \dot X + \dot X\transpose X}{\dot A}$.
	Thus we obtain \eqref{eq:Hessfquadraticform}.
\end{proof}

In Sections~\ref{sec:smallbeta} and \ref{sec:circles},
we are primarily interested in the case $d = 2$.
It is then convenient to particularize Lemma~\ref{lem:gradhess} to $d= 2$,
where the Hessian has a simpler matrix form.
Using Lemma~\ref{lem:gradhess}, let us prove Lemma~\ref{lem:socpcircles}.

\begin{proof}[Proof of Lemma~\ref{lem:socpcircles}]
    Start from Lemma~\ref{lem:gradhess} and recall the definitions $A = W \odot \varphi'(X\transpose X)$ and $S = \ddiag(X\transpose X A) - A$~\eqref{eq:SA}.
    The first-order condition is $0 = \grad f(X) = -XS = XA - X\ddiag(X\transpose X A)$, as stated in \eqref{eq:socpcircles_cond}.
    
    For the second-order condition, we consider the eigenvalues of the Hessian quadratic form (repeated here from \eqref{eq:Hessfquadraticform}):
    \begin{align}
        \innersmall{\dot X}{\Hess f(X)[\dot X]} & = - \innersmall{\dot X\transpose \dot X}{S} + \frac{1}{2} \inner{(X\transpose \dot X + \dot X\transpose X)^{\odot 2}}{W \odot \varphi''(X\transpose X)}.
        \label{eq:Hessrepeatproof}
    \end{align}
    With $d = 2$, \emph{all} tangent vectors $\dot X \in \T_X\calM$~\eqref{eq:TXM} are of the form
    \begin{align*}
        \dot X = J X \diag(\alpha) && \textrm{ with } && J = \begin{pmatrix*} 0 & -1 \\ 1 & 0 \end{pmatrix*} && \textrm{ and } && \alpha \in \Rn.
    \end{align*}
    This is because $Jx_i$ rotates $x_i$ by $\pi/2$, making it a basis for the tangent space to the circle at $x_i$.
    This corresponds to expanding tangent vectors in an orthonormal basis with coordinates $\alpha$, which has no effect on eigenvalues of quadratic forms, so we now express~\eqref{eq:Hessrepeatproof} in terms of $\alpha$.
    
    Using $\dot X\transpose \dot X = \diag(\alpha) X\transpose X \diag(\alpha) = (X\transpose X) \odot \alpha\alpha\transpose$ and also $\diag(X\transpose X) = \one$, we compute
    \begin{align*}
        \innersmall{S}{\dot X\transpose \dot X} = \innersmall{S}{(X\transpose X) \odot \alpha\alpha\transpose} = \alpha\transpose ( S \odot X\transpose X) \alpha = \alpha\transpose \left( \ddiag(X\transpose X A) - A \odot X\transpose X \right) \alpha.
    \end{align*}
    Since $A$ and $X\transpose X$ are symmetric, it holds that $\diag(X\transpose X A) = (A \odot X\transpose X)\one$.
    Thus, we have found $\innersmall{S}{\dot X\transpose \dot X} = \alpha\transpose L(A \odot X\transpose X) \alpha$ where $L$ is the graph Laplacian~\eqref{eq:laplaciandef}.
    
    Furthermore, $(X\transpose \dot X)_{ij} = \alpha_j x_i\transpose Jx_j^{}$ and $(\dot X\transpose X)_{ij} = \alpha_i (Jx_i^{})\transpose x_j = -\alpha_i x_i\transpose Jx_j^{}$ so that
    \begin{align*}
        (X\transpose \dot X + \dot X\transpose X)_{ij} & = (\alpha_j - \alpha_i) x_i\transpose Jx_j^{}.
    \end{align*}
    Since $J$ rotates vectors in $\reals^2$ by $\pi/2$, it is easy to check that $x_i\transpose J x_j^{}$ is the sine of the angle between $x_i$ and $x_j$, whereas $x_i\transpose x_j^{}$ is the cosine of that angle.
    It then follows from $\sin(\theta)^2 = 1-\cos(\theta)^2$ that $\big(x_i\transpose J x_j^{}\big)^2 = 1 - (x_i\transpose x_j^{})^2$.
    %
    %
    Thus,
    \begin{align*}
        \frac{1}{2} \inner{(X\transpose \dot X + \dot X\transpose X)^{\odot 2}}{W \odot \varphi''(X\transpose X)} = \frac{1}{2} \sum_{i,j} w_{ij} \varphi''(x_i\transpose x_j^{}) \big(1 - (x_i\transpose x_j^{})^2\big) (\alpha_i - \alpha_j)^2.
    \end{align*}
    From the comments after eq.~\eqref{eq:laplaciandef}, we recognize a Laplacian structure.
    We have
    \begin{align*}
        \frac{1}{2} \inner{(X\transpose \dot X + \dot X\transpose X)^{\odot 2}}{W \odot \varphi''(X\transpose X)} = \alpha\transpose L(K) \alpha,
    \end{align*}
    with weights $K_{ij} = w_{ij} \varphi''(x_i\transpose x_j^{}) (1 - (x_i\transpose x_j)^2)$ as in \eqref{eq:A_K_def}.
    Overall, we found
    \begin{align}
        -\innersmall{\dot X}{\Hess f(X)[\dot X]} &= \alpha\transpose \left( \ddiag(X\transpose X A) - A \odot X\transpose X \right) \alpha - \alpha\transpose L(K) \alpha \label{RHSforinequality} \\
        &= \alpha\transpose L(A \odot X\transpose X - K) \alpha = \alpha\transpose L(M) \alpha.\label{eqnforM}
    \end{align}
    The Hessian quadratic form is negative semidefinite if and only if the right-hand side quadratic form~\eqref{RHSforinequality} is nonnegative for all $\alpha \in \Rn$,
    which is equivalent to the matrix inequality in \eqref{eq:socpcircles_cond}.
    From~\eqref{eqnforM}, we also see that the eigenvalues of $\Hess f(X)$ equal those of $-L(M)$.
\end{proof}

\section{Benign landscape in a hemisphere} \label{app:hemisphere}
In this section, we provide a proof of Lemma~\ref{lem:hemisphere},
which we use in the proofs of Theorems~\ref{thm:taylorphi} and~\ref{thm:largebeta}.
\citet[Prop.~12]{markdahl2018nsphere}
showed that if all the points $x_1, \ldots, x_n$ lie in a common \emph{open} hemisphere, then first-order criticality implies global optimality (this is classical for linear $\varphi$).
We improve this slightly to allow a general connected graph and to allow the points to lie in a \emph{closed} hemisphere by using second-order conditions.

\begin{proof}[Proof of Lemma~\ref{lem:hemisphere}]
	Because $\varphi'$ is positive, $X$ is a global maximum if and only if $x_1 = \cdots = x_n$.
	Since $X$ is critical, Lemma~\ref{lem:gradhess} provides $X A - X \ddiag(X\transpose X A) = 0$,
	where $A = W \odot \varphi'(X\transpose X)$.
    (That is, the gradient at $X$ is zero.)
	Transposing and multiplying by $v$ yields, entrywise,
	\begin{align*}
		0 = (AX\transpose v)_i - (X\transpose X A)_{ii} (X\transpose v)_i =
		\sum_{j=1}^n w_{ij} \varphi'(x_i\transpose x_j^{}) \left( (x_j\transpose v) - (x_i\transpose x_j^{})(x_i\transpose v) \right) && \textrm{ for all } i.
	\end{align*}
	Select $i$ such that $x_i\transpose v \geq 0$ is smallest among $\{x_1\transpose v, \ldots, x_n\transpose v\}$.
	Then each term in the sum is nonnegative and therefore must be zero.
	Choose $j$ distinct from $i$ such that $w_{ij} > 0$ (which exists since the graph is connected).
	By the assumption $\varphi'(x_i\transpose x_j^{}) > 0$, we obtain
	\begin{align*}
		x_i\transpose v \leq x_j\transpose v = (x_i\transpose x_j^{})(x_i\transpose v) \leq x_i\transpose v.
	\end{align*}
	Thus, the inequalities are equalities.
	
	If $x_i\transpose v > 0$, we deduce $x_i\transpose x_j^{} = 1$ (using $(x_i\transpose x_j^{})(x_i\transpose v) = x_i\transpose v$), and so $x_i = x_j$.
	Using that $x_j\transpose v = x_i\transpose v$ is minimal among $\{x_1\transpose v, \ldots, x_n\transpose v\}$, we can repeat this argument across a spanning tree of positive weights to conclude that $x_1 = \cdots = x_n$.
	
	Otherwise, if $x_i\transpose v = 0$, then $x_j\transpose v = 0$ as well (since $x_i\transpose v = x_j\transpose v$).
	Repeat this argument across a spanning tree of positive weights to deduce that $X\transpose v = 0$.
	Thus $\rank(X) < d$.
	
	To handle this rank-deficient case, we use second-order conditions, namely, that the Riemannian Hessian is negative semidefinite.
	Assume, without loss of generality, that $v$ is unit-norm (we still have $X \transpose v = 0$).
	Set $\dot X = v\one\transpose \in \Rdn$,
	and note that $X\transpose \dot X = 0$ (which guarantees that $\dot X$ is a valid tangent vector)
	and that $\dot X \transpose \dot X = \one \one\transpose$.
	
	With the Hessian as given in Lemma~\ref{lem:gradhess}, we have,
	again using the fact that $X \transpose \dot X = 0$,
	\begin{align*}
		0 \geq \innersmall{\dot X}{\Hess f(X) [\dot X]} & = - \innersmall{\dot X\transpose \dot X}{S} \\
		&= \innersmall{\one \one\transpose}{A - \ddiag(X \transpose X A) } = \sum_{i,j=1}^n w_{ij} \varphi'(x_i\transpose x_j^{})(1 - x_i\transpose x_j^{}),
	\end{align*}
	where $S = \ddiag(X\transpose X A) - A$ and, again, $A = W \odot \varphi'(X\transpose X)$ as given in \eqref{eq:SA}.
	The sum is over nonnegative terms, hence they must each be equal to zero.
	For $i,j$ such that $w_{ij} > 0$, we deduce from $\varphi'(x_i\transpose x_j^{}) > 0$ that $x_i = x_j$.
	Applying this to a spanning tree of positive weights confirms that $x_1 = \cdots = x_n$.
\end{proof}

\section{Synchronization on spheres ($d \geq 3$): a direct proof of Theorem~\ref{thm:spheres}} \label{app:spheres}
In this section, we provide a proof for Theorem~\ref{thm:spheres}, adapting the proof technique in \citep[Thm.~1.1]{mcrae2023benignlowdim}.
That paper considers synchronization on more general Stiefel manifolds (beyond circles and spheres)
with general connected graphs.
It is limited to what here would be a linear $\varphi$, but the extension to increasing, convex $\varphi$ is easy on spheres.


To exploit second-order criticality conditions, we should perturb the points $x_1, \ldots, x_n$.
Rather than selecting these perturbations deterministically, it is convenient to choose them at random. 
Moreover, as the goal is to achieve synchrony, we perturb the points toward the \emph{same} direction.
Explicitly, with $\gamma \in \Rd$ random, we move $x_i$ in the direction $\dot x_i = \gamma - (x_i\transpose \gamma)x_i$---the projection of $\gamma$ to the tangent space of the sphere at $x_i$.
The same $\gamma$ is used for all points. 
We make this precise below.

Assume $X$ is a critical point for $f$ where the Hessian is negative semidefinite.
Lemma~\ref{lem:gradhess} provides
\begin{align}
	XS = 0 && \textrm{ and } && \innersmall{S}{\dot X\transpose \dot X} \geq 0
	\label{eq:socpphiconvex}
\end{align}
for all $\dot X \in \T_X\calM$, with $S = \ddiag(X\transpose X A) - A$ and $A = W \odot \varphi'(X\transpose X)$;
we have simplified the Hessian expression~\eqref{eq:Hessfquadraticform}, because the assumption $\varphi''(t) \geq 0$ for all $t \in [-1, 1]$ implies that the second term in~\eqref{eq:Hessfquadraticform} is nonnegative.

Since the inequality holds for all tangent $\dot X$, we can also allow $\dot X$ to be random in $\T_X\calM$ and claim the inequality holds in expectation, that is, $\innersmall{S}{\expecttsmall{\dot X\transpose \dot X}} \geq 0$.

Let $\gamma \sim \mathcal{N}(0, I_d)$ be a random vector in $\Rd$ with i.i.d.\ entries following a standard normal distribution.
We use it to build a random tangent vector at $X$ as follows, with $\one \in \Rn$ and $\Proj_X$~\eqref{eq:Proj} the orthogonal projector to the tangent space at $X$:
\begin{align}  
	\dot X & = \Proj_X(\gamma \one\transpose) = \gamma \one\transpose - X\diag(X\transpose \gamma).
\end{align}
Then
\begin{align*}
	\dot X\transpose \dot X & = \|\gamma\|^2 \one \one\transpose - \one \gamma\transpose X \diag(X\transpose \gamma) - \diag(X\transpose \gamma) X\transpose \gamma \one\transpose + \diag(X\transpose \gamma) X\transpose X \diag(X\transpose \gamma) \\
	& = \|\gamma\|^2 \one \one\transpose - \one (\gamma\transpose X)^{\odot 2} - (X\transpose \gamma)^{\odot 2} \one\transpose + (X\transpose X) \odot (X\transpose \gamma \gamma\transpose X).
\end{align*}
Taking expectations, we have $\expectt{\|\gamma\|^2} = d$, $\expectt{\gamma\gamma\transpose} = I_d$ and $\expectt{(X\transpose \gamma)_i^2} = \expectt{x_i\transpose \gamma \gamma\transpose x_i} = x_i\transpose x_i^{} = 1$, so that
\begin{align*}
	\expecttsmall{\dot X\transpose \dot X} & = (d-2) \one \one\transpose + (X\transpose X)^{\odot 2}.
\end{align*}
We decompose the second term as follows, in order to isolate the discrepancy between $x_i\transpose x_j^{}$ and 1 (its target value):
\begin{align*}
	(x_i\transpose x_j^{})^2 = (1 - x_i\transpose x_j^{})^2 - 1 + 2 x_i\transpose x_j^{}.
\end{align*}
In matrix notation, $(X\transpose X)^{\odot 2} = (\one \one\transpose - X\transpose X)^{\odot 2} - \one \one\transpose + 2X\transpose X$.
Therefore,
\begin{align*}
	\expecttsmall{\dot X\transpose \dot X} & = (d-3) \one \one\transpose + (\one \one\transpose - X\transpose X)^{\odot 2} + 2X\transpose X.
\end{align*}
Exploiting the first-order condition $XS = 0$, we further obtain
\begin{align*}
	0 \leq \innerbig{S}{\expecttsmall{\dot X\transpose \dot X}} = (d-3) \innerbig{S}{\one \one\transpose} + \innerbig{S}{(\one \one\transpose - X\transpose X)^{\odot 2}}.
\end{align*}
Recall that $S = \ddiag(X\transpose X A) - A$ with $A = W \odot \varphi'(X\transpose X)$.
Thus
\begin{align*}
	\innersmall{S}{\one \one\transpose} = \trace(X\transpose X A) - \innersmall{A}{\one\one\transpose} = \innersmall{A}{X\transpose X - \one\one\transpose} \leq 0
\end{align*}
owing to the fact that each entry of $X\transpose X$ is in the interval $[-1, 1]$ and the assumption $\varphi'(t) \geq 0$ for $t \in [-1, 1]$ ensures the entries of $A$ are nonnegative.
Moreover, $d-3 \geq 0$, and the diagonal of $\one \one\transpose - X\transpose X$ is zero, so
\begin{align}
	0 & \leq (d-3) \innerbig{S}{\one \one\transpose} + \innerbig{S}{(\one \one\transpose - X\transpose X)^{\odot 2}} \nonumber\\
	& \leq -\innersmall{A}{(\one \one\transpose - X\transpose X)^{\odot 2}} \nonumber\\
	& = - \sum_{i,j} w_{ij} \varphi'(x_i\transpose x_j^{}) (1 - x_i\transpose x_j^{})^2 \leq 0, \label{eq:almostthere}
\end{align}
where the last inequality follows once again from $\varphi'(t) \geq 0$ for all $t \in [-1, 1]$.
As a result, the final sum in~\eqref{eq:almostthere} is equal to zero, so each individual term is equal to zero.
If nodes $i$ and $j$ are connected by an edge ($w_{ij} > 0$), then the (stricter) assumption $\varphi'(t) > 0$ for $t \in [-1, 1]$ forces $1 - x_i\transpose x_j^{} = 0$, that is, $x_i = x_j$.
As we have assumed the graph is connected, apply the same argument along the edges of a spanning tree to deduce that $x_1 = \cdots = x_n$.
(If the graph is not connected, apply the same reasoning to a spanning forest 
to deduce synchrony in each connected component.)
This concludes the proof of Theorem~\ref{thm:spheres}.

\begin{remark}
	To establish~\eqref{eq:socpphiconvex}, we simplified the conclusions of Lemma~\ref{lem:gradhess} by using the assumption $\varphi''(t) \geq 0$.
	That is the only place where that assumption is used.
	Alternatively, we could keep the full expression for the Hessian and compute the expectation of $(X\transpose \dot X + \dot X\transpose X)_{ij}^2$.
	This yields a more refined final inequality which could be used to relax the assumptions on $\varphi$ or the assumption $d \geq 3$.
	However, it is unclear to us how to improve the results for $\varphi_\beta$ with $d = 2$ in this way.
\end{remark}

\section{Proofs from Section~\ref{sec:circles}: Lemmas~\ref{proprepeatingpoints} and~\ref{lemmaIFT}}\label{apprepeatingpoints}

\subsection{Lemma~\ref{proprepeatingpoints}: trading weights for repetitions}

To prove Lemma~\ref{proprepeatingpoints}, we first prove the following finer statement:

\begin{lemma}\label{proprepeatingpointsrefinement}
Let $d=2$, and let $m$ be any positive integer.  Let $q \in \reals^m$ have positive integer entries, let $n = q_1 + \cdots + q_m$, and define the $n \times n$ diagonal matrix
\begin{align*}
  \tilde{Q} = \begin{pmatrix*}
    {q_1} I_{q_1} & & \\
    & \ddots & \\
    & & {q_m} I_{q_m}.
  \end{pmatrix*}.
\end{align*}
Given $X = (x_1, \ldots, x_m) \in \reals^{2 \times m}$, define
\begin{align*}
  \tilde X = (\underbrace{x_1, \ldots, x_1}_\text{$q_1$ times}, \ldots,\underbrace{ x_m, \ldots, x_m}_\text{$q_m$ times}) \in \reals^{2 \times n}.
\end{align*}

If $X$ is critical for $f_m(X) = \frac{1}{2} \langle q q\transpose , \varphi(X\transpose X) \rangle$, then $\tilde X$ is critical for $f_n(\tilde X) = \frac{1}{2} \langle \one_n^{} \one_n\transpose , \varphi({\tilde X}\transpose \tilde X) \rangle$.

Further, let $M$, $m_{ij} = q_i q_j h(x_i\transpose x_j)$, be the adjacency matrix corresponding to $\Hess f_m(X)$, as described in equation~\eqref{hesseqnintheta} of Lemma~\ref{lem:socpcircles}, and define $D = \diag(M \one_m)$.
Likewise define $\tilde M, \tilde D$ for $\Hess f_n(\tilde X)$.

Let $\lambda_1, \ldots, \lambda_{m-1}, 0$ be the eigenvalues of the Laplacian $L(M)$.
Then $\tilde{Q}^{1/2} L(\tilde M) \tilde{Q}^{1/2}$ has eigenvalues $\lambda_1, \ldots, \lambda_{m-1}, 0$ too, and also has eigenvalues $d_{ii}$, $i = 1, \dots, m$, each with multiplicity $q_i-1$ (where $d_{ii}$ is the $i$th diagonal entry of $D$).
This covers all $n$ eigenvalues of $\tilde{Q}^{1/2} L(\tilde M) \tilde{Q}^{1/2}$.
\end{lemma}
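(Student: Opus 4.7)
The plan is to prove the two assertions---preservation of criticality and the decomposition of eigenvalues---by exploiting the block structure induced by grouping the $n$ columns of $\tilde X$ into $m$ groups of sizes $q_1, \ldots, q_m$. Under this grouping, all the relevant quantities ($\tilde X\transpose \tilde X$, $\tilde A = \one_n^{} \one_n\transpose \odot \varphi'(\tilde X\transpose \tilde X)$, $\tilde M$, and $\tilde Q$) have block entries that depend only on the group labels $(i,j)$ and not on the within-group indices $(s,t)$.

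First I would establish criticality by a direct computation with the gradient formula of Lemma~\ref{lem:gradhess}. Writing the column of $\grad f_n(\tilde X) = -\tilde X \tilde S$ at position $(k,u)$ as a sum over $(j,t)$ and collapsing the sum over $t$ into a factor $q_j$, one finds that this column equals $1/q_k$ times the $k$-th column of $\grad f_m(X) = -X S^{(m)}$ (where $S^{(m)}$ corresponds to $f_m$ via Lemma~\ref{lem:gradhess}). Criticality of $X$ for $f_m$ then yields criticality of $\tilde X$ for $f_n$.

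For the Hessian eigenvalues, the approach is to introduce the $n \times m$ grouping matrix $U$ whose $i$-th column is the indicator of group $i$, and to decompose $\reals^n = V_1 \oplus V_2$, where $V_1 = \col(U)$ is the $m$-dimensional subspace of vectors constant on each group and $V_2 = V_1^\perp$ consists of vectors orthogonal to $\one_{q_i}$ within each group. Both subspaces are invariant under $\tilde Q$ (which acts as the scalar $q_i$ on group $i$) and, as one verifies, under $L(\tilde M)$; hence both are invariant under the symmetric operator $\tilde Q^{1/2} L(\tilde M) \tilde Q^{1/2}$. On $V_2$, a short computation using that $\tilde M v = 0$ whenever $v$ is supported on a single group and is orthogonal to $\one$ there shows that $L(\tilde M)$ acts as $\tilde d_i I$ on the $V_2$-part of group $i$, with $\tilde d_i = \sum_k q_k h(x_i\transpose x_k^{})$; this gives the eigenvalue $q_i \tilde d_i = d_{ii}$ of $\tilde Q^{1/2} L(\tilde M) \tilde Q^{1/2}$ with multiplicity $q_i - 1$. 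On $V_1$, writing $v = U\alpha$ one checks that $L(\tilde M) U \alpha$ lies in $\col(U)$ and that the induced $m\times m$ action, expressed in the orthonormal basis $\tilde U_i = U_i / q_i^{1/2}$ of $V_1$, is precisely $L(M)$, yielding eigenvalues $\lambda_1, \ldots, \lambda_{m-1}, 0$.

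The main delicacy will be the change of basis on $V_1$: since the columns of $U$ are orthogonal but not orthonormal, a naive matrix computation would give $\tilde Q^{1/2} L(\tilde M) \tilde Q^{1/2}|_{V_1}$ as $\diag(q)^{1/2}\bigl(\diag(q)^{-1} L(M)\bigr)\diag(q)^{1/2}$, which is only similar---not equal---to $L(M)$. Passing to the orthonormal basis $\tilde U_i = U_i / q_i^{1/2}$ restores the correct symmetric representation $L(M)$, so the eigenvalues on $V_1$ are exactly those of $L(M)$, including the forced zero from $L(M)\one_m = 0$. The dimension count $m + \sum_i (q_i - 1) = n$ then confirms that all $n$ eigenvalues of $\tilde Q^{1/2} L(\tilde M) \tilde Q^{1/2}$ have been accounted for.
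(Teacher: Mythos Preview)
Your proposal is correct and follows essentially the same approach as the paper. The paper introduces the same grouping matrix (called $E$ rather than your $U$), derives the identities $\tilde X = XE^\top$, $E^\top E = Q$, $\tilde Q E = EQ$, and then writes the explicit decomposition
\[
\tilde Q^{1/2} L(\tilde M)\tilde Q^{1/2} \;=\; \underbrace{\tilde Q^{1/2}\bigl[\tilde D - E D' E^\top\bigr]\tilde Q^{1/2}}_{\text{acts on }V_2} \;+\; \underbrace{EQ^{-1/2} L(M) Q^{-1/2} E^\top}_{\text{acts on }V_1},
\]
where $D' = Q^{-1}DQ^{-1}$. The left term is block diagonal with $i$th block $d_{ii}\bigl(I_{q_i}-q_i^{-1}\one_{q_i}\one_{q_i}^\top\bigr)$, exactly your $V_2$-analysis, and the right term uses that $EQ^{-1/2}$ has orthonormal columns---precisely your $\tilde U_i = U_i/q_i^{1/2}$---to read off the eigenvalues of $L(M)$ on $V_1$. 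Your column-by-column criticality argument is the coordinate version of the paper's matrix identity $\tilde X\,\ddiag(\tilde X^\top\tilde X\tilde A) = \tilde X\tilde A$.
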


\begin{proof}[Proof of Lemma~\ref{proprepeatingpointsrefinement}]
Define $Q = \diag(q) \in \reals^{m \times m}$, and define the $n \times m$ matrix
\[
	E = \begin{pmatrix*}
		\one_{q_1} & 0 & 0 & \ldots & 0\\
		0 & \one_{q_2} & 0 & \ldots & 0\\
		\vdots & \vdots & \vdots & \ddots & \vdots \\
		0 & 0  & 0 & \ldots & \one_{q_m}
	\end{pmatrix*}.
\]
Note the identities
\begin{align} \label{usefuleqnstar1}
\tilde X = X E\transpose, && E\transpose E = Q, && \tilde Q E = E Q.
\end{align}
Following equation~\eqref{eq:A_K_def} of Lemma~\ref{lem:socpcircles} (for both $f_m$ and $f_n$), define $A = q q\transpose \odot \varphi'(X\transpose X) \in \reals^{m \times m}$ and $\tilde A = \varphi'(\tilde X\transpose \tilde X) \in \reals^{n \times n}$.
Defining $A' = \varphi'(X\transpose X)$, we obtain the identities
\begin{align} \label{usefuleqnstar2}
A = Q A' Q, && \tilde A = E A' E\transpose.
\end{align}
Since $X$ is critical for $f_m$, Lemma~\ref{lem:socpcircles} gives $X A = X \ddiag(X\transpose X A)$.
Hence, using equations~\eqref{usefuleqnstar1} and~\eqref{usefuleqnstar2}, we have
\begin{align*}
	\tilde X \ddiag(\tilde X\transpose \tilde X \tilde A) &= X E\transpose \ddiag(E X\transpose X Q A' E\transpose) \\
	&= X E\transpose \ddiag(E X\transpose X A Q^{-1} E\transpose) \\
	&\stackrel{\mathclap{(1)}}{=} X \ddiag(X\transpose X A Q^{-1}) E\transpose \\
	&= X \ddiag(X\transpose X A) Q^{-1} E\transpose\\
	&= X A Q^{-1} E\transpose \\
	&= X Q A' Q Q^{-1} E\transpose \\
	&= X E\transpose E A' E\transpose \\
	&= \tilde X \tilde A,
\end{align*}
where equality $\stackrel{(1)}{=}$ follows from the identity $E\transpose \ddiag(E B E\transpose) = \ddiag(B) E\transpose$ for $B \in \reals^{m \times m}$.
We conclude that $\tilde X$ is critical for $f_n$, again by Lemma~\ref{lem:socpcircles}.

Let us move on to the second-order condition.
Our goal is to express the eigenvalues of $L(\tilde M) = \tilde D - \tilde M$ in terms of those of $L(M) = D - M$.
Towards this end, define the $m \times m$ matrices $M' = Q^{-1} M Q^{-1}$ and $D' = Q^{-1} D Q^{-1}$, which have entries
\begin{align*}
    m_{ij}' = (q_i q_j)^{-1} m_{ij} = h(x_i \transpose x_j) && \textrm{ and } && d_{ii}' = q_i^{-2} d_{ii}.
\end{align*}
By the definition of $\tilde M$, we have
\[
	\tilde M = \begin{pmatrix*}
		m_{11}' \one_{q_1}^{} \one_{q_1}\transpose & m_{12}' \one_{q_1}^{} \one_{q_2}\transpose  & \ldots & m_{1 m}' \one_{q_1}^{} \one_{q_m}\transpose\\
		\vdots & \vdots & \ddots & \vdots \\
		m_{m1}' \one_{q_m}^{} \one_{q_1}\transpose & m_{m 2}' \one_{q_m}^{} \one_{q_2}\transpose  & \ldots & m_{m m}' \one_{q_m}^{} \one_{q_m}\transpose
	\end{pmatrix*} = E M' E\transpose.
\]
Moreover, $\tilde D = \diag(\tilde M \one_n)$ is diagonal
with diagonal blocks $\tilde D_i$ given by:
\begin{align}\label{formoftildeD}
	\tilde D = \begin{pmatrix*}
		\tilde D_1 &  & \\
		 & \ddots & \\
		 &  & \tilde D_m
	\end{pmatrix*},
	\quad \quad \quad
	\tilde D_i 
	= q_i^{-1} d_{ii} I_{q_i} = q_i d_{ii}' I_{q_i}.
\end{align}

We can decompose $L(\tilde M)$ as
\begin{align*}
L(\tilde M) &= \tilde D - \tilde M  \\
&= [\tilde D - E D' E\transpose]  + E(D' - M')E\transpose
\\ &=  [\tilde D - E D' E\transpose]  + E Q^{-1}L(M)Q^{-1}E\transpose.
\end{align*}
Using $\tilde Q E = E Q$, we get
\begin{align}\label{decompoositionofLMprime}
\tilde{Q}^{1/2} L(\tilde M) \tilde{Q}^{1/2} =  \underbrace{\tilde{Q}^{1/2}[\tilde D - E D' E\transpose]\tilde{Q}^{1/2}}_{\text{left term}}  
+ \underbrace{E Q^{-1/2} L(M)Q^{-1/2} E\transpose}_{\text{right term}}.
\end{align}
Crucially, the decomposition~\eqref{decompoositionofLMprime} reveals the eigendecomposition of $\tilde{Q}^{1/2} L(\tilde M) \tilde{Q}^{1/2}$.
Indeed, $E Q^{-1/2}$ is orthonormal since $E\transpose E = Q$, and the column spaces of the left and right terms of~\eqref{decompoositionofLMprime} are orthogonal.
This latter observation is apparent from the fact that $\tilde{Q}^{1/2}[\tilde D - E D' E\transpose]\tilde{Q}^{1/2}$ is \emph{block diagonal} with $i$-th diagonal block given by
\begin{align}\label{formofthediagonalblocks}
q_i \big(\tilde D_i - d_{ii}' \one_{q_i}^{} \one_{q_i}\transpose\big) = d_{ii}\big(I_{q_i} - q_i^{-1} \one_{q_i}^{} \one_{q_i}\transpose\big),
\end{align}
where we have used~\eqref{formoftildeD}.

We conclude that $\tilde{Q}^{1/2} L(\tilde M) \tilde{Q}^{1/2}$ has $m$ eigenvalues equal to the eigenvalues of $L(M)$, due to the right term of~\eqref{decompoositionofLMprime}.
Due to the left term of~\eqref{decompoositionofLMprime} and the form of the blocks~\eqref{formofthediagonalblocks}, $\tilde{Q}^{1/2} L(\tilde M) \tilde{Q}^{1/2}$ also has eigenvalues $d_{ii}, i = 1, \ldots, m,$ each with multiplicity $q_i-1$.

To summarize, we have identified all
$m + \sum_{i=1}^m (q_i  - 1) = n$
eigenvalues of $\tilde{Q}^{1/2} L(\tilde M) \tilde{Q}^{1/2}$.
\end{proof}

Lemma~\ref{proprepeatingpoints} now follows as a special case of Lemma~\ref{proprepeatingpointsrefinement}:

\begin{proof}[Proof of Lemma~\ref{proprepeatingpoints}]
We use the notation from Lemma~\ref{proprepeatingpointsrefinement}.
Since we assume $\Hess f_m(X)$ is negative definite (up to the trivial eigenvalue due to symmetry, as usual), 
Lemma~\ref{lem:socpcircles} implies $\lambda_{i} > 0$ for $i=1, \ldots, m-1,$ and $D - M \succeq 0$.
Thus,
\[
	d_{ii} \geq m_{ii} = q_i^2 h(1) = q_i^2 \varphi'(1) > 0.
\]
So invoking Lemma~\ref{proprepeatingpointsrefinement}, we have shown that $\tilde{Q}^{1/2} L(\tilde M) \tilde{Q}^{1/2}$ has 
\[
	(m-1) + \sum_{i=1}^m (q_i  - 1) = n - 1
\]
positive eigenvalues (and of course has one remaining zero eigenvalue).
So the same is true for $L(\tilde M)$.
Lemma~\ref{lem:socpcircles} then tells us that $\Hess f_n(\tilde X)$ is also negative definite.
\end{proof}

\subsection{Lemma~\ref{lemmaIFT}: strict spurious points do not vanish under small perturbations}
\begin{proof}[Proof of Lemma~\ref{lemmaIFT}]
In order to ensure that $\varepsilon$ is nonnegative, we reparameterize $\varepsilon = \alpha^2$ for $\alpha \in \reals$.
Consider the function
\[
	f_{q, \alpha, \tau}(X) := \frac{1}{2}\langle q q\transpose, \varphi_{\alpha^2, \tau}(X\transpose X)\rangle.
\]
We think of $f_{q, \alpha, \tau}$ as a perturbation of the function $f_{\one_m, 0, 0}$, which is simply $f$ with $W$ the all-ones matrix and the ReLU $\varphi(t) = \max\{0, t\}$.
Let $X$ be the regular $m$-gon~\eqref{eq:ngon}.

We want to apply the implicit function theorem \cite[Thm.~3.3.1]{krantz2013implicitfunctionthm} to the map
\[
	F \colon ((q, \alpha, \tau), Y) \mapsto \grad f_{q, \alpha, \tau}(Y)
\]
at $q = \one_m, \alpha = 0, \tau = 0$ and $Y=X$.
In order to do this, we need:
\begin{itemize}
\item[(a)] $F$ to be continuously differentiable in a neighborhood of $q = \one_m, \alpha = 0, \tau = 0$, $Y=X$.
\item[(b)] $0 = F((\one_m,0, 0), X) = \grad f_{\one_m,0, 0}(X)$.  This is true by Lemma~\ref{propphiforrelu} (and the assumption $m$ is odd).
\item[(c)] The differential of $Y \mapsto F((\one_m, 0, 0), Y)$ at $Y=X$ to be invertible.  That differential is exactly $\Hess f_{\one_m, 0, 0}(X)$, which is negative definite (and so invertible) by Lemma~\ref{propphiforrelu}.\footnote{Strictly speaking, the Hessian is not negative definite since it has a single zero eigenvalue due to the global rotation symmetry.  So to apply the implicit function theorem we must first mod out that zero eigenvalue, e.g., by fixing one of the points or passing to the quotient.  Then the Hessian becomes truly negative definite.}
\end{itemize}
For item (a): by Lemma~\ref{lem:gradhess},
$\varphi_{\alpha^2, \tau}$ appears in $F$ only through $\varphi_{\alpha^2, \tau}'$.
So it is enough to verify that
\[
	(\alpha, \tau, t) \mapsto \varphi_{\alpha^2, \tau}'(t) = 1 - \frac{1}{1 + e^{(t-\tau)/\alpha^2}}
\]
is continuously differentiable in a neighborhood of $(0, 0, t_0)$ when $t_0 \neq \tau$.\footnote{Since $m$ is odd, none of the inner products $x_i\transpose x_j^{}$ of the regular $m$-gon lie on the kink of the ReLU; this is why we only require continuous differentiability when $t_0 \neq \tau$.}
It is straightforward to do this by explicitly computing the differential of $(\alpha, \tau, t) \mapsto \varphi_{\alpha^2, \tau}'(t)$; we omit the details.

The hypotheses of the implicit function theorem are satisfied, and that theorem yields that there is a $\delta > 0$ such that if $\|q - \one_m\| \leq \delta$, $\alpha \in [-\sqrt{\delta}, \sqrt{\delta}]$ and $\tau \in [-\delta, \delta]$, 
then there is an $X_{q, \alpha, \tau} \in \calM$ \emph{near} the $m$-gon $X$ such that 
\[
	0 = F((q, \alpha, \tau), X_{q, \alpha, \tau}) = \grad f_{q, \alpha, \tau}(X_{q, \alpha, \tau}).
\]
Since $X_{q, \alpha, \tau}$ is near $X$, by continuity $X_{q, \alpha, \tau}$ is also spurious and has negative definite Hessian (except for the single zero eigenvalue), possibly after making $\delta$ smaller.
\end{proof}

\section{Synchronization with $\beta \geq \frac{n^2}{\pi^2}$: Theorem~\ref{thm:largebeta}} \label{sec:largebeta}

We now give a proof for Theorem~\ref{thm:largebeta}.
That result and the proof below are due to \citet{geshkovski2025mathtransformers}, with only slight improvements as outlined in the introduction.

\begin{proof}[Proof of Theorem~\ref{thm:largebeta}]
Assume $X$ is critical for $f$ with negative semidefinite Hessian.
From Lemma~\ref{lem:socpcircles}, we know the Laplacian~\eqref{eq:laplaciandef} $L(M) = \diag(M\one) - M$ is positive semidefinite, with
\begin{align*}
    m_{ij} = w_{ij} h(x_i\transpose x_j^{}) && \textrm{ and } &&
    h(t) = t \varphi'(t) - (1-t^2) \varphi''(t).
\end{align*}
By condition~\eqref{conditiononvarphiforlargebeta}, we know that $h(t) \geq 0$ implies $t \geq \cos(\frac{\pi}{n})$.

Pick a nonempty proper subset $S \subset \{1, \ldots, n\}$.
Let $\alpha_i = 1$ if $i \in S$ and $\alpha_i = 0$ if $i \notin S$.
Since the Laplacian $L$ is positive semidefinite we find
\begin{align*}
  \alpha\transpose L \alpha = \frac{1}{2} \sum_{i,j} m_{ij}(\alpha_i - \alpha_j)^2 = \sum_{i \in S, j \notin S} w_{ij} h(x_i\transpose x_j^{}) \geq 0.
\end{align*}
There exist indices $i \in S, j \notin S$ such that $w_{ij} > 0$
(as otherwise all the weights between $S$ and its complement would be zero, and the graph would be disconnected).
For at least one of those pairs, $h(x_i\transpose x_j^{}) \geq 0$ (as otherwise the sum would be negative), i.e., $x_i\transpose x_j^{} \geq \cos(\frac{\pi}{n})$ by condition~\eqref{conditiononvarphiforlargebeta}.
With $\dist$ denoting the distance on the circle (in radians), this gives $\dist(x_i, x_j) \leq \cos^{-1}\!\big(\cos(\frac{\pi}{n})\big) = \frac{\pi}{n}$.

Starting with $S = \{1\}$, apply the argument above to identify a node in the complement.
Add it to $S$ and repeat.
This gradually grows a spanning tree over the $n$ nodes, and it satisfies $\dist(x_i, x_j) \leq \frac{\pi}{n}$ for each edge of the tree.
The radius of a tree is at most $n/2$, hence we can select a central node $x_i$ such that $\dist(x_i, x_j) \leq \frac{n}{2} \cdot \frac{\pi}{n} = \frac{\pi}{2}$ for all $j$.
In other words, $x_1, \ldots, x_n$ lie in the same half circle: we can then apply Lemma~\ref{lem:hemisphere} to conclude, proving the main part of Theorem~\ref{thm:largebeta}.

It remains to verify that if $\varphi''(t) \geq \frac{n^2}{\pi^2} \varphi'(t) > 0$ for all $t$, then condition~\eqref{conditiononvarphiforlargebeta} holds.  
We can of course assume $n \geq 2$ (because if $n=1$, the state is already trivially synchronized).
Assume $h(t) \geq 0$; we want to show $t \geq \cos(\frac{\pi}{n})$.
First, note that the assumption on $\varphi$ gives
\[
	0 \leq h(t) = t \varphi'(t) - (1-t^2) \varphi''(t) \leq \Big(t - \frac{n^2}{\pi^2}(1-t^2)\Big) \varphi'(t).
\]
Since $\varphi' > 0$, we conclude $t - \frac{n^2}{\pi^2}(1-t^2) \geq 0$, which implies\footnote{We already have $t \geq 0$ (as otherwise $h(t) < 0$), and we can assume $t < 1$ (as otherwise we are done).}
$\frac{t}{1-t^2} \geq \frac{n^2}{\pi^2}$.
Using the inequality $\frac{\cos(s)}{1 - \cos(s)^2} \leq \frac{1}{s^2}$ for all $s \in (0, \frac{\pi}{2}]$, we have $\frac{t}{1-t^2} \geq \frac{n^2}{\pi^2} \geq \frac{\cos(\pi/n)}{1 - \cos(\pi/n)^2}$.
This implies $t \geq \cos(\frac{\pi}{n})$, as $t \mapsto \frac{t}{1-t^2}$ is increasing on $[-1,1]$.
\end{proof}
%

\section{The quadratic case $\varphi(t) = \frac{1}{2}t^2$} \label{sec:quadraticphi}


We prove Theorem~\ref{thm:quadraticphi} in this section via three lemmas.

\begin{lemma} \label{lem:gradhessquadratic}
    Within the context of Theorem~\ref{thm:quadraticphi},
    if $X \in \calM$ is a critical point of $f$ then
    \begin{align}
        (XX\transpose)X = XD && \textrm{ where } && D = \ddiag(X\transpose X X\transpose X).
        \label{eq:focpsquared}
    \end{align}
    If the Hessian at $X$ is negative semidefinite then, for all $\dot X \in \T_X\calM$,
    \begin{align}
        \innersmall{\dot X\transpose \dot X}{\ddiag(X\transpose X X\transpose X)} & \geq \innerbig{XX\transpose}{\dot X \dot X\transpose} + \innerbig{X\dot X\transpose}{\dot X X\transpose} + \sqfrobnormbig{\dot X X\transpose}.
        \label{eq:socptsquared}
    \end{align}
\end{lemma}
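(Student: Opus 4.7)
The plan is to specialize Lemma~\ref{lem:gradhess} to the present setting and then do a small amount of trace bookkeeping. With $\varphi(t) = \tfrac{1}{2}t^2$ we have $\varphi'(t) = t$ and $\varphi''(t) = 1$, so the entrywise applications give $\varphi'(X\transpose X) = X\transpose X$ and $\varphi''(X\transpose X) = \one\one\transpose$. Combined with $W = \one\one\transpose$, the matrix $A = W \odot \varphi'(X\transpose X)$ from~\eqref{eq:SA} collapses to $A = X\transpose X$, so $S = \ddiag(X\transpose X A) - A = D - X\transpose X$ with $D = \ddiag(X\transpose X X\transpose X)$ as in the statement. The first-order condition $\grad f(X) = -XS = 0$ from Lemma~\ref{lem:gradhess} then reads $X(X\transpose X) = XD$, which is precisely~\eqref{eq:focpsquared}.

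For the Hessian, I plug the same simplifications into~\eqref{eq:Hessfquadraticform}. The factor $W \odot \varphi''(X\transpose X)$ becomes $\one\one\transpose$, so the second term reduces to $\tfrac{1}{2}\sqfrobnormsmall{X\transpose \dot X + \dot X\transpose X}$. Expanding the square and using that $\sqfrobnormsmall{X\transpose \dot X} = \sqfrobnormsmall{\dot X\transpose X}$, together with the cyclic trace identities
\begin{align*}
    \sqfrobnormsmall{X\transpose \dot X} = \innersmall{XX\transpose}{\dot X \dot X\transpose}, && \innersmall{X\transpose \dot X}{\dot X\transpose X} = \innersmall{X\dot X\transpose}{\dot X X\transpose},
\end{align*}
this second term is exactly $\innersmall{XX\transpose}{\dot X\dot X\transpose} + \innersmall{X\dot X\transpose}{\dot X X\transpose}$. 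For the first term, I split $-\innersmall{\dot X\transpose\dot X}{S} = -\innersmall{\dot X\transpose \dot X}{D} + \innersmall{\dot X\transpose \dot X}{X\transpose X}$ and note that $\innersmall{\dot X\transpose \dot X}{X\transpose X} = \sqfrobnormsmall{\dot X X\transpose}$. Collecting everything and imposing $\innersmall{\dot X}{\Hess f(X)[\dot X]} \le 0$ for every $\dot X \in \T_X\calM$ then yields~\eqref{eq:socptsquared}.

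There is no real obstacle here: the argument is a direct specialization of Lemma~\ref{lem:gradhess} followed by standard trace manipulations. The only point that requires a moment's thought is the identity $\innersmall{X\transpose \dot X}{\dot X\transpose X} = \innersmall{X\dot X\transpose}{\dot X X\transpose}$, which holds because both sides equal $\trace\bigl((X\dot X\transpose)^2\bigr)$ by cyclicity of the trace.
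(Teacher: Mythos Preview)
Your proof is correct and follows essentially the same approach as the paper: both specialize Lemma~\ref{lem:gradhess} with $\varphi'(t)=t$, $\varphi''(t)=1$, $W=\one\one\transpose$, then expand $\tfrac12\sqfrobnormsmall{X\transpose\dot X+\dot X\transpose X}$ and the $\innersmall{\dot X\transpose\dot X}{S}$ term via the same trace identities. You even spell out the cyclicity step for $\innersmall{X\transpose\dot X}{\dot X\transpose X}=\innersmall{X\dot X\transpose}{\dot X X\transpose}$ more explicitly than the paper does.
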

\begin{proof}
    Consider Lemma~\ref{lem:gradhess} with $\varphi'(t) = t$ and $\varphi''(t) = 1$.
    We see that $A = X\transpose X$ and $S = \ddiag(X\transpose X X\transpose X) - X\transpose X$.
    The first-order condition $XS = 0$ provides~\eqref{eq:focpsquared}.
    The second-order conditions provide
    \begin{align*}
        0 & \geq - \innersmall{\dot X\transpose \dot X}{S} + \frac{1}{2} \sqfrobnormbig{X\transpose \dot X + \dot X\transpose X}
    \end{align*}
    for all $\dot X$ in the tangent space at $X$.
    The right-most term expands into
    \begin{align*}
        \frac{1}{2}\sqfrobnormbig{X\transpose \dot X + \dot X\transpose X} & = \frac{1}{2}\innerbig{X\transpose \dot X + \dot X\transpose X}{X\transpose \dot X + \dot X\transpose X} = \innerbig{XX\transpose}{\dot X \dot X\transpose} + \innerbig{X\dot X\transpose}{\dot X X\transpose},
    \end{align*}
    whereas $\innersmall{\dot X\transpose \dot X}{S} = \innersmall{\dot X\transpose \dot X}{\ddiag(X\transpose X X\transpose X)} - \innersmall{\dot X\transpose \dot X}{X\transpose X}$.
    Combine to confirm~\eqref{eq:socptsquared}.
\end{proof}

The first-order condition~\eqref{eq:focpsquared} is formatted to highlight the following key fact: each column $x_i$ of $X$ is an eigenvector of $XX\transpose \in \Rdd$, with eigenvalue $D_{ii}$.
Since $XX\transpose$ is symmetric, the spectral theorem tells us that if $x_i, x_j$ are associated to distinct eigenvalues ($D_{ii} \neq D_{jj}$) then $x_i$ and $x_j$ are orthogonal.
The next step is to show that this does not happen.

\begin{lemma}
    With $X$ as in Lemma~\ref{lem:gradhessquadratic}, no two columns of $X$ are orthogonal.
\end{lemma}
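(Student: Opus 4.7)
The plan is a direct contradiction argument using second-order conditions, exploiting the fact that when $x_i \perp x_j$, the vector $x_j$ lies in the tangent space to the sphere at $x_i$, so it is a legitimate ``direction'' in which to rotate the $i$-th column. I would set up everything in terms of the quantities $D_{ii} = (X^\top X X^\top X)_{ii} = \|X^\top x_i\|^2$ from Lemma~\ref{lem:gradhessquadratic}, since these encode both the first-order condition (each column of $X$ is an eigenvector of $XX^\top$ with eigenvalue $D_{ii}$) and the diagonal appearing on the left-hand side of~\eqref{eq:socptsquared}.

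Suppose for contradiction that there exist indices $i \neq j$ with $x_i^\top x_j^{} = 0$. I would define the tangent vector
\begin{align*}
    \dot X = x_j^{} e_i^\top \in \R^{d \times n},
\end{align*}
i.e., all columns zero except the $i$-th, which is $x_j$. Tangency is the observation that $x_i^\top x_j^{} = 0$. Now plug $\dot X$ into the four terms appearing in~\eqref{eq:socptsquared}: quick computations give $\dot X^\top \dot X = e_i^{} e_i^\top$ (since $\|x_j\|=1$), $\dot X \dot X^\top = x_j^{} x_j^\top$, $X \dot X^\top = x_i^{} x_j^\top$, $\dot X X^\top = x_j^{} x_i^\top$, and hence $\langle X\dot X^\top, \dot X X^\top\rangle = (x_i^\top x_j^{})^2 = 0$ and $\|\dot X X^\top\|_F^2 = 1$. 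Reading off~\eqref{eq:socptsquared}, this collapses to
\begin{align*}
    D_{ii} \geq D_{jj} + 1.
\end{align*}

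Then I would repeat the argument with the roles of $i$ and $j$ swapped, using $\dot X = x_i^{} e_j^\top$ (which is tangent by the same orthogonality), to obtain the mirror inequality $D_{jj} \geq D_{ii} + 1$. Adding the two yields $0 \geq 2$, a contradiction. I do not anticipate a real obstacle here: the only content is choosing the tangent direction, and once that is done everything reduces to one-line trace computations. The takeaway is that orthogonality between two columns would force the Hessian to be strictly positive along the rank-one rotation that tilts $x_i$ toward $x_j$, which is incompatible with the second-order maximality of $X$.
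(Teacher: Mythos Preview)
Your argument is correct and follows essentially the same approach as the paper's proof: both assume $x_i^\top x_j^{} = 0$, plug a carefully chosen tangent direction into~\eqref{eq:socptsquared}, and read off a numerical contradiction. The only cosmetic difference is that the paper uses the single symmetric direction $\dot X = x_i^{} e_j^\top + x_j^{} e_i^\top$, which yields $0 \geq 4$ in one step, whereas you use the two rank-one directions $x_j^{} e_i^\top$ and $x_i^{} e_j^\top$ separately and add the resulting inequalities to get $0 \geq 2$; the content is the same.
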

\begin{proof}
    For contradiction, assume $x_i\transpose x_j^{} = 0$ for some $i,j$.
    Then, $x_j$ is in the tangent space to the sphere at $x_i$, and vice versa.
    Consequently,
    \begin{align*}
        \dot X = x_ie_j\transpose + x_je_i\transpose
    \end{align*}
    is in the tangent space to $\calM$ at $X$, where $e_k \in \Rn$ is the $k$th column of the identity matrix of size $n$.
    By computation, $\dot X\transpose \dot X = e_i^{} e_i\transpose + e_j^{} e_j\transpose$.
    Also, $\dot X \dot X\transpose = x_i^{} x_i\transpose + x_j^{} x_j\transpose$ and $X\dot X\transpose = x_i^{}x_j\transpose + x_j^{}x_i\transpose = \dot X X\transpose$.
    Plugging these into the second-order conditions~\eqref{eq:socptsquared} reveals that
    \begin{align*}
        \innerbig{e_i^{} e_i\transpose + e_j^{} e_j\transpose}{\ddiag(X\transpose X X\transpose X)} & \geq \innerbig{XX\transpose}{x_i^{} x_i\transpose + x_j^{} x_j\transpose} + 2\sqfrobnormbig{x_i^{}x_j\transpose + x_j^{}x_i\transpose}.
    \end{align*}
    The left-hand side and the first term on the right-hand side are equal.
    Also, $\sqfrobnormbig{x_i^{}x_j\transpose + x_j^{}x_i\transpose} = 2$ since $x_i$ and $x_j$ are orthogonal.
    Overall, we have found $0 \geq 4$: a contradiction indeed.
\end{proof}

Thus, we know that $x_1, \ldots, x_n$ are all eigenvectors for $XX\transpose$ with the \emph{same} eigenvalue.
If those vectors span all of $\Rd$, this imposes severe constraints on $XX\transpose$---too strong, in fact.

\begin{lemma} \label{lem:quadraticrankdef}
    With $X$ as in Lemma~\ref{lem:gradhessquadratic}, it holds that $\rank(X) < d$.
\end{lemma}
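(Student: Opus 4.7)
My approach is by contradiction: assume $\rank(X) = d$ and derive a violation of the second-order inequality~\eqref{eq:socptsquared}.

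First I would pin down the structure of $XX\transpose$. The first-order identity $(XX\transpose)X = XD$ from Lemma~\ref{lem:gradhessquadratic} says precisely that each column $x_i$ is an eigenvector of the symmetric matrix $XX\transpose$ with eigenvalue $D_{ii}$. Combining this with the preceding lemma---no two columns of $X$ are orthogonal---the spectral theorem forces the $D_{ii}$ to coincide at a common value $\lambda$, since eigenvectors of a symmetric matrix attached to distinct eigenvalues must be orthogonal. The additional assumption $\rank(X) = d$ then means the columns span $\Rd$, so $XX\transpose = \lambda I_d$. Taking traces yields $\lambda d = \trace(X\transpose X) = n$, i.e., $\lambda = n/d$.

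Second, I would probe~\eqref{eq:socptsquared} with a tangent direction tailored to expose this scalar identity. Pick any index $i$ and any unit vector $v \in \Rd$ with $v\transpose x_i = 0$ (possible since $d \geq 2$), and set $\dot X = v e_i\transpose \in \T_X\calM$, concentrating the entire perturbation in the $i$th column. Using $XX\transpose = (n/d) I_d$, $\|v\| = \|x_i\| = 1$, and $v\transpose x_i = 0$ throughout, the left hand side of~\eqref{eq:socptsquared} collapses to $(X\transpose X X\transpose X)_{ii} = x_i\transpose (XX\transpose) x_i = n/d$, while the three right hand side terms evaluate to $n/d$, $0$, and $1$ respectively. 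The inequality therefore reads $n/d \geq n/d + 1$, which is absurd.

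The proof is not technically demanding; the substantive observation is the spectral rigidity step, where the combination of Lemma~\ref{lem:gradhessquadratic} with the non-orthogonality of the columns already forces $XX\transpose$ to be a scalar multiple of $I_d$ as soon as $X$ has full row rank. Once that is established, there is no ``slack'' left in $XX\transpose$ to absorb the unit contribution $\|\dot X X\transpose\|_F^2 = 1$ produced by the single-column perturbation $\dot X = v e_i\transpose$, and the contradiction follows immediately.
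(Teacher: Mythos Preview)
Your proof is correct and follows essentially the same route as the paper: reduce to $XX\transpose = \lambda I_d$ via the spectral argument, then plug a single-column tangent direction $\dot X = v e_i\transpose$ into~\eqref{eq:socptsquared} to obtain the contradiction $0 \geq 1$. The explicit evaluation $\lambda = n/d$ is an extra detail the paper omits (it simply cancels $\lambda\|\dot X\|_F^2$ from both sides), but it is harmless.
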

\begin{proof}
    For contradiction, assume $\rank(X) = d$.
    Then, we may select $d$ linearly independent columns among those of $X$.
    These form a basis for $\Rd$ and (by the reasoning above) are eigenvectors of $XX\transpose$ for the same eigenvalue $\lambda$.
    It follows that $XX\transpose$ has a single eigenvalue, and so $XX\transpose = \lambda I_d$.
    %
    %
    Plug this and $\diag(X\transpose X) = \one$ into the second-order conditions~\eqref{eq:socptsquared} to reveal that
    \begin{align}
        \lambda \sqfrobnormbig{\dot X} & \geq \lambda \sqfrobnormbig{\dot X} + \innerbig{X\dot X\transpose}{\dot X X\transpose} + \sqfrobnormbig{\dot X X\transpose}
    \end{align}
    for all tangent $\dot X$.
    To contradict this, let $\dot X = ue_1\transpose$ where $e_1$ is the first column of the identity matrix $I_n$, and $u$ is an arbitrary unit-norm vector in the tangent space to the sphere at $x_1$.
    (Such a vector exists as long as $d \geq 2$.)
    Then, $\dot X X\transpose = ux_1\transpose$.
    Since $x_1\transpose u = 0$, the above inequality becomes $0 \geq 1$: a contradiction indeed.
\end{proof}

We now know that $X$ is rank deficient, and we wish to deduce that $X$ is optimal.
Notice that we cannot use Lemma~\ref{lem:hemisphere} here, since $\varphi'(t) = t$ is not positive on $[-1, 1]$.
Thus, we resort to a different argument.

Rotate the points such that the last row of $X$ is zero (formally: let $X = U\Sigma V\transpose$ be an SVD of $X$, and apply $U\transpose$ to $X$).
The new $X$ still satisfies first- and second-order conditions since $f$ is invariant to such rotations.
For now, discard the last row of $X$, producing a new matrix $\tilde X \in \reals^{(d-1) \times n}$.
In particular, $X\transpose X = \tilde X\transpose \tilde X$.
This new matrix also satisfies first- and second-order conditions for $f$ \emph{in the new dimension $d-1$} (because all we did was remove potential directions for improvement).
%
%
%
Moreover, the rank did not change: $\rank(\tilde X) = \rank(X)$.
If $d - 1 \geq 2$, Lemma~\ref{lem:quadraticrankdef} applies and so $\tilde X$ too is rank deficient.
We may repeat this operation until we reach the conclusion that the original $X$ must have had rank 1.

That implies that all columns of $X$ are colinear, hence that they are equal up to sign.
This is indeed maximal for $f$ since $\varphi(t)$ is maximal if and only if $t = \pm 1$: the proof of Theorem~\ref{thm:quadraticphi} is complete.

\section{Matlab script for numerical exploration} \label{sec:matlabcode}

\begin{lstlisting}[style=Matlab-editor]
clear; clf; clc;
% This script requires Manopt 7.0 or later: see manopt.org.

%% Choose dimension d and number of points n
d = 2;
n = 16;

%% Choose the nonlinearity varphi(t)
phichoice = 'expbeta';
switch phichoice
    case 'expbeta'
        beta = 6;
        phi = @(t) exp(beta*(t-1));
        dphi = @(t) beta*exp(beta*(t-1));
        d2phi = @(t) beta^2*exp(beta*(t-1));
    case 'linear'
        phi = @(t) t;
        dphi = @(t) 1;
        d2phi = @(t) 0;
    case 'quadratic'
        phi = @(t) (1/2)*t.^2;
        dphi = @(t) t;
        d2phi = @(t) 1;
    case 'cubic'
        phi = @(t) (1/3)*t.^3;
        dphi = @(t) t.^2;
        d2phi = @(t) 2*t;
    case 'logsumexp'
        m = n; % set to n or to same as q in the qgon below
        tau = cos((3/2)*(2*pi/m));
        eps = 1/m^3;
        phi = @(t) eps*log(1 + exp((t-tau)/eps));
        dphi = @(t) 1./(1 + exp(-(t-tau)/eps));
        d2phi = @(t) (sech((t-tau)/(2*eps)).^2)./(4*eps);
end

%% Choose an adjacency matrix W for the graph
graph = 'complete';
switch graph
    case 'complete'
        W = ones(n, n);
    case 'cycle'
        I = 1:n;
        J = circshift(I, 1);
        W = zeros(n, n);
        W(sub2ind([n, n], I, J)) = 1;
        W = W+W';
end

%% Build the Manopt problem structure
% Oblique factory is Cartesian product of n spheres in R^d.
problem.M = obliquefactory(d, n, false, false);
% Manopt minimizes so the cost function is -f (so as to maximize f).
sgn = -1; % set to -1 to maximize f, and to +1 to minimize f
inner = @(A, B) A(:).'*B(:);
problem.cost = @(X) sgn*(1/n^2)*inner(W, phi(X'*X));
if exist('dphi', 'var') && exist('d2phi', 'var')
    problem.egrad = @(X) sgn*(2/n^2)*X*(W.*dphi(X'*X));
    problem.ehess = @(X, Xdot) ...
                     sgn*(2/n^2)*(Xdot*(W.*dphi(X'*X)) + ...
                     X*(W.*d2phi(X'*X).*(Xdot'*X + X'*Xdot)));
else
    % If dphi, d2phi not provided, try Auto Differentiation.
    problem = manoptAD(problem);
end
% checkgradient(problem);
% checkhessian(problem);

%% Pick and plot an initial configuration of points
init = 'random';
switch init
    case 'random'
        X0 = problem.M.rand();
    case 'ngon'
        X0 = zeros(d, n);
        t = linspace(0, 2*pi, n+1);
        t = t(1:end-1);
        X0(1, :) = cos(t);
        X0(2, :) = sin(t);
    case 'qngon'
        X0 = zeros(d, n);
        q = 5;
        t = linspace(0, 2*pi, q+1);
        t = repmat(t(1:q), 1, ceil(n/q));
        X0(1, :) = cos(t(1:n));
        X0(2, :) = sin(t(1:n));
    case 'tetrahedron'
        assert(d == 3 && n == 4);
        X0 = [1       -1/3       -1/3       -1/3
              0  sqrt(8/9) -sqrt(2/9) -sqrt(2/9)
              0          0  sqrt(6/9) -sqrt(6/9)];
end

% Plot X0
hdots = plot_dots(X0); title('Initial X0');

%% Define some options for optimizers
options.maxiter = 300;
options.tolgradnorm = 1e-12;
% Plot the points at each iteration with a slight pause
options.statsfun = @(problem, X, stats) ...
                      update_dots(hdots, X, .1, stats);

%% Optimize
optimizationplots = 'gradientdynamics';
switch optimizationplots
    case 'fast'
        X = trustregions(problem, X0, options);
    case 'gradientdynamics'
        X = steepestdescent(problem, X0, options);
    case 'none'
        X = X0;
end
update_dots(hdots, X); title('Final X');

%% Check gradient and Hessian at computed point
gnorm = problem.M.norm(X, getGradient(problem, X));
hspec = hessianspectrum(problem, X);
fprintf('Gradient norm at X: %.4g\n', gnorm);
fprintf('Eigenvalues of Hessian at X:\n');
disp(hspec');

%% Helper functions to plot x_1, ..., x_n for d = 2 or 3.
function hdots = plot_dots(X)
    hold all;
    dotscolors = X(1, :) + X(2, :);
    if size(X, 1) == 3
        [Sx, Sy, Sz] = sphere(50);
        surf(Sx, Sy, Sz, 'FaceAlpha', .25, ...
                         'FaceColor', 2*[.1, .2, .3]);
        hdots = scatter3(X(1, :)', X(2, :)', X(3, :)', ...
                         200, dotscolors, 'filled');
    elseif size(X, 1) == 2
        t = linspace(0, 2*pi, 501);
        plot(cos(t), sin(t), 'k-', 'LineWidth', 2);
        hdots = scatter(X(1, :)', X(2, :)', 200, ...
                        dotscolors, 'filled');
    end
    axis equal; axis off;
    set(gcf, 'Color', 'w');
end

function stats = update_dots(hdots, X, time, stats)
    set(hdots, 'XData', X(1, :));
    set(hdots, 'YData', X(2, :));
    if size(X, 1) == 3
        set(hdots, 'ZData', X(3, :));
    end
    drawnow;
    if exist('time', 'var') && ~isempty(time)
        pause(time);
    end
    if ~exist('stats', 'var') % For Manopt's statsfun
        stats = [];
    end
end

\end{lstlisting}

\end{document}